\numberwithin{equation}{section}
\DeclareRobustCommand{\bbDelta}{{\mathpalette\bb@Delta\relax}}
\newcommand{\bb@Delta}[2]{%
  \begingroup
  \sbox\z@{$\m@th#1\Delta$}%
  \dimendef\Dht=6 \dimendef\Dwd=8
  \setlength{\Dwd}{\wd\z@}%
  \setlength{\Dht}{\ht\z@}%
  \begin{picture}(\Dwd,\Dht)
  \put(0,0){$\m@th#1\Delta$}
  \put(.42\Dwd,.7\Dht){\line(10,-26){.25\Dht}}
  \end{picture}%
  \endgroup
}
\numberwithin{equation}{section}
\newtheorem{thm}{Theorem}
\newtheorem{theo}[thm]{Theorem}
\newtheorem{corollary}[thm]{Corollary}
\newtheorem{lemma}[thm]{Lemma}
\newtheorem{proposition}[thm]{Proposition}
\newtheorem{rmk}[thm]{Remark}
\newtheorem{define}[thm]{Definition}
\newcommand{\abs}[1]{\left|#1\right|}
\title{Explicit multi-slit Loewner flows and their geometry}
\author[Eleftherios Theodosiadis]{E. K. Theodosiadis}
\address{Department of Mathematics, Stockholm University, 106 91 Stockholm, Sweden}
\email{eleftherios@math.su.se}
\date{\today}
\keywords{Loewner flows, Riemann maps, Semigroups of holomorphic maps, PDEs in the complex plane.}
\begin{document} 

\maketitle

\begin{abstract}
In this paper we present explicit solutions to the radial and chordal Loewner PDE and we make an extensive study of their geometry. Specifically, we study multi-slit Loewner flows, driven by the time-dependent point masses $\mu_{t}:=\sum_{j=1}^{n}b_j \delta_{\{\zeta_j e^{iat}\}}$ in the radial case and  $\nu_t:=\sum_{j=1}^{n}b_j\delta_{\{k_j\sqrt{1-t}\}}$ in the chordal case, where all the above parameters are chosen arbitrarily. 

Furthermore, we investigate their close connection to the semigroup theory of holomorphic functions, which also allows us to map the chordal case to the radial one.
\end{abstract}
	\section{Introduction}
	
	Loewner's theory was pioneered in 1923 by Charles Loewner (1893–1968), who studied the properties of continuously evolving families of slit mappings of the unit disc and discovered that such a family satisfies the so-called Loewner PDE. A few years earlier, in 1916, Ludwig Bieberbach proved that the second coefficient of the Taylor series of a univalent (analytic and one-to-one) mapping of the unit disc, satisfies the inequality $|a_2|\le2|a_1|$, while he also conjectured that $|a_n|\le n|a_1|$, for every $n\ge1$. The first theoretical application of Loewner's PDE appeared in the proof of the Bieberbach conjecture for the third coefficient. An affirmative answer to the Bieberbach conjecture was first given by Louis de Branges in 1984, however, a second and simpler proof by Pommerenke and Fitzgerald in 1985 was possible due to Loewner's theory for slit mappings.
	
	A contemporary and more general version of Loewner's PDE was introduced by Pavel Kufarev (1909–1968) in 1947. According to his study, given a continuous and increasing family of simply connected domains, then their corresponding Riemann maps, satisfy a PDE driven by an analytic function of the unit disc with positive real part and conversely, any such PDE produces a family of solutions, whose images form a continuous and increasing family of simply connected domains. Roughly speaking, the driving functions are replaced by driving measures, since analytic functions with positive real part are represented by finite Borel measures. This generalizes Loewner's theory to non-slit mappings. For this reason, nowadays, we often refer to the Loewner-Kufarev PDE.

	\textbf{The radial Loewner PDE}. Considering a simply connected domain $D$ in the Riemann sphere $\hat{\mathbb{C}}$ where we omit at least two points, the Riemann mapping theorem guarantees a conformal map from the unit disk onto $D$. Assuming a point $a\in D$, this map is unique when we require $f(0)=a$ and $f'(0)>0$. 
	
	Let $(D_t)_{t\ge0}$ be a decreasing (resp. increasing), i.e, $D_t \subset D_s $ for $s<t$ (resp. $D_s\subset D_t $), family of simply connected domains in $\hat{\mathbb{C}}$, that is continuous in the sense of Caratheodory's kernel convergence. A decreasing (resp. increasing) Loewner chain is defined to be   the family of the unique Riemann maps described above, $f_t :=f(\cdot,t):\mathbb{D}\rightarrow D_t$. We refer to $f$ as the \textit{Loewner flow}. Assuming that the origin is contained in each $D_t$, then $f_t $ expands in Taylor series as
	 $$f(z,t)=\beta(t)z+\dots, \quad |z|<1,\quad t\ge0 $$
	 where $\beta$ is decreasing (resp. increasing) with time, as follows by the Schwarz lemma. For the rest, we shall only consider decreasing families. By monotonicity, it also follows that $\beta$ is almost everywhere differentiable. For any such family, there exists a family of bounded Borel measures $(\mu_t )_{t\ge0}$ on the unit circle, with mass $\mu_t (\partial\mathbb{D})=-\beta'(t)/\beta(t)$, so that the Loewner-Kufarev equation 
	\begin{equation}
	 \frac{\partial f}{\partial t}(z,t)=-zf'(z,t)\int_{\partial\mathbb{D}}\dfrac{\zeta+z}{\zeta-z}d\mu_t{(\zeta}):=-zf'(z,t)p(z,t)\label{kufarev}
	\end{equation}
	 is satisfied for all $z\in\mathbb{D}$ and for almost all $t\ge0$. In the classical notation, $\beta (t)$ is given by the exponential function.  This can be considered by reparameterizing time, since $\beta$ is monotonic. For a detailed approach, see \cite{dur} and \cite{pom}.

	 Conversely, equation $(\ref{kufarev})$ admits a unique solution for given initial values $f(z,0)$. We are interested in the initial value problem $(\ref{kufarev})$ and when $f(z,0)=z$, thus the starting domain is the unit disk. Then, the solution $f(z,t):\mathbb{D}\rightarrow D_t =\mathbb{D}\setminus K_t$ describes a decreasing Loewner flow. We refer to $(K_t )_{t\ge0}$ as the \textit{growing hulls} of the evolution.
		 
	 In terms of the \textit{transition functions} $\phi_{s,t}(z)=\phi(z;s,t)=f_{s}^{-1}\circ f_t (z)$ for $s<t$ the Loewner-Kufarev ODE is written as
	 \begin{equation}
	 	\dfrac{\partial\phi}{\partial s}(z;s,t)=\phi(z;s,t)p(\phi(z;s,t),s),\ \phi(z;t,t)=z \label{transitionODE}
	 \end{equation}
	  for $z\in\mathbb{D}$ and $0<s<t$. Clearly, it is easier to solve the ODE instead of the corresponding PDE and it will play the basic role in the forthcoming discussion.

	 \textbf{The chordal Loewner PDE.} The situation in the upper half plane is somewhat similar to the radial case. Here, we consider a decreasing family of simply connected domains $(D_t)_{t\ge0}$, such that  $D_t\subset\mathbb{H}$, $K_t:=\mathbb{H}\setminus D_t$ is compact and moreover $\mathbb{H}\setminus K_t$ is also simply connected. Such a family of domains is produced by the so-called \textit{chordal} Loewner equation.
	 
	 Let $D$ be a simply connected domain as above. Then, by Riemann's mapping theorem there exists a conformal map $g$ from $D$ onto $\mathbb{H}$, such that $g(z)-z\rightarrow0$ as $z$ becomes infinite. This property is referred in the literature as the \textit{hydrodynamic condition} of $g$. The chordal Loewner flows are always normalized such that each map of the flow satisfies the hydrodynamic condition.
	 
	 The slit case in the upper half plane is treated as in the radial Loewner's slit case. A recent, detailed proof is given by A. Monaco and P. Gumenyuk in \cite{mon}. See also A. Starnes \cite{star} and M.- N. Technau \cite{tec} for the multiple and infinitely many slits versions respectively. Assume that $\gamma$ is a Jordan curve emanating from $\mathbb{R}$, with parameterization $\gamma=\gamma(t)$, $0\le t<\infty$ and write $K_t:=\gamma([0,t])$. Let $g_t$ be the corresponding Riemann maps described above and let $f_t$ be the inverse mappings. It is then proved that $\lambda(t):=g_t(\gamma(t))$ is a continuous real-valued function of $t$ and for each $T>0$ we have the chordal Loewner ODE 
	 $$\dfrac{\partial g}{\partial t}
	 (z,t)=\dfrac{2}{g(z,t)-\lambda(t)},\quad g(z,0)=0$$
	 for all $z\in\mathbb{H}\setminus\gamma([0,T])$ and $0\le t\le T$. By taking its inverse, the chordal Loewner PDE is written as 
	 $$\dfrac{\partial f}{\partial t}(z,t)=-f'(z,t)\dfrac{2}{z-\lambda(t)}$$
	 in $\mathbb{H}\times[0,\infty)$. The continuous function $\lambda$ is called the driving function of the flow $f(z,t)$.
	 
	 More generally, given a family of probability measures $(\mu_t)_{t\ge0}$ of $\mathbb{R}$ with compact support, we consider the ODE in $\mathbb{H}\times[0,\infty)$
	 $$\dfrac{\partial w}{\partial t}(z,t)=\int_{\mathbb{R}}\dfrac{d\mu_t(x)}{w(z,t)-x}$$
	 with initial value $w(z,0)=0$. Let $T_z$ be the supremum of all $t$, such that the solution to the equation is well defined and $g_t(z)\in\mathbb{H}$ for all $t\le T_z$. Then, there exists a unique solution $g_t$ which is conformal in the domain $D_t:=\{z: T_z>t\}$, satisfying the hydrodynamic condition. Finally, the Loewner flow $f_t=g_t^{-1}$ satisfies the PDE
	 $$\dfrac{\partial f}{\partial t}(z,t)=-f'(z,t)\int_{\mathbb{R}}\dfrac{d\mu_t (x)}{z-x}$$
	 for all $z\in\mathbb{H}$ and $t\ge0$, called the chordal Loewner PDE. We refer to the compact sets $K_t:=\mathbb{H}\setminus D_t$ as the compact hulls generated by the flow. See \cite{lawl} for details.

\textbf{Connection to semigroups.} There is a close connection of Loewner's theory to the semigroup theory for holomorphic self-maps of the unit disc. In fact, due to the Berkson-Porta formula (see \cite{bra}), it turns out that a Loewner flow $f(z,t)$ driven by a time-independent function $p(z,t)=p(z)$, forms a semigroup with fixed point the origin and it is parameterized as 
	 
	 $$f(z,t)=h^{-1}(e^{-t}h(z))$$
	 for all $z\in\mathbb{D}$ and $t\ge0$, where $h$ is a starlike function with respect to zero, called \textit{the Koenigs} function. In \cite{sola1}, A. Sola presents examples of Loewner flows, driven by time-independent densities.
	 
	 To be more precise, a continuous \textit{elliptic semigroup} of holomorphic self-maps of the unit disc, with Denjoy-Wolff point 0, is defined as a family $(\phi_t)_{t\ge0}\subset H(\mathbb{D})$ so that
	 \begin{enumerate}
	     \item $\phi_0=id_{\mathbb{D}}$
	     \item $\phi(z,t+s)=\phi(\phi(z,t),s)$, for all $z\in\mathbb{D}$ and $s,t\ge0$,
	     \item $\phi_t(z)\rightarrow0$, as $t\rightarrow\infty$, for all $z\in\mathbb{D}$,
	     \item $\mathbb{D}\times[0,\infty)\ni(z,t)\mapsto\phi(z,t)$ is continuous in the uniform on compacts topology.
	 \end{enumerate}
For the rest, we shall only refer to continuous semigroups, thus (4) is always assumed. It is proved that there exists some $\lambda\in\mathbb{C}$, with $\text{Re}\lambda\ge0$, so that $\phi'(0)=e^{-\lambda t}$. We call $\lambda$ the \textit{spectral value} of the semigroup. Abbreviating the definition, we might refer to $e^{-\lambda t}$ as the spectral value instead. See proposition 8.1.4 in \cite{bra} for more details on the Denjoy-Wolff theorem and the spectral value.

Now, given a semigroup $(\phi_t)_{t\ge0}$ , then and only then, there exists a unique vector field $G\in H(\mathbb{D})$, such that the PDE
$$\dfrac{\partial\phi}{\partial t}(z,t)=G(\phi(z,t))$$
is satisfied in $\mathbb{D}\times T$, where $T$ is an interval containing $[0,\infty)$. We call $G$ the \textit{infinitesimal generator} of the semigroup. The Berkson-Porta theorem (or formula) states that given a non-constant $G\in H(\mathbb{D})$, then $G$ is the infinitesimal generator of a semigroup if and only if, there exists some $p\in H(\mathbb{D})$ with $\text{Re}(p(z))>0$ in $\mathbb{D}$, so that 
$$G(z)=-zp(z).$$
In the Loewner language, $p$ is the driving function as we previously mentioned.

A semigroup of the upper half plane is defined similarly. Moreover, a non-elliptic semigroup is defined as above, but it has Denjoy-Wolff point in the boundary. Thus, there exists some $\tau\in\partial\mathbb{D}$, such that condition $(3)$ is written as $\phi_t (z)\rightarrow\tau$.

To conclude, we see $h$, the Koenigs function of the semigroup, as the solution to the ODE
$$G(z)=-\lambda\dfrac{h(z)}{h'(z)}$$
in $\mathbb{D}$, where $\lambda$ is the spectral value of the semigroup. Observe by Berkson-Porta's formula and the characterization of spirallike functions (see next section), that by taking the real part of the preceding equation, then $h$ is an $\text{Arg}(\lambda)$-spirallike function of $\mathbb{D}$. For our purposes, it is enough to view the Koenigs function as discussed above. We may also think of it as the function that maps the orbits $\{\phi(z,t)/ t\ge0\}$ onto logarithmic spirals (for elliptic semigroups), or onto half-lines (for non-elliptic semigroups). Of course, the theory for Koenigs function is much deeper. For a general discussion, see chapter 9 in \cite{bra}. 

	\textbf{Brief overview of literature.} Loewner proved that growing curves correspond to continuous driving functions, but the converse is not true. Kufarev presented an example of point masses with the corresponding flow being non-slit. For instance, the driving function $e^{-i\lambda(t)}=k(t)=(e^{-t}+i\sqrt{1-e^{-2t}})^3$ produces the family of domains $D_t$, that are $\mathbb{D}$ minus the part of the disc lying in $\mathbb{D}$ and intersecting $\partial\mathbb{D}$ orthogonally, at the points $k(t)$ and $k(t)^\frac{1}{3}$. In the literature, we find sufficient conditions for a driving function to produce hulls that are curves. In particular, D. Marshall and S. Rohde prove in \cite{MR}, that Lip-$\frac{1}{2}$ driving functions, with sufficiently bounded $\text{Lip}(\frac{1}{2})$-norm, produce quasi-slit domains, while J. Lind, proved in her work in \cite{lind}, $C_0=4$ to be the optimal upper bound for the norm. Although the preceding results refer to single-slit flows, a generalization for multiple-slit flows was done by S. Schleissinger in \cite{slei1}, proving that Lip-$\frac{1}{2}$ driving functions $\lambda_1,\dots,\lambda_n$ produce $n$ disjoint Jordan curves.
	
	Some explicit flows, that are relevant to our work as well, are found in \cite{kad} by P. Kadanoff, B. Nienhuis and W. Kager, where they explicitily solve the PDE for the driving function $k\sqrt{1-t}$ and they describe the geometry of the solutions for the cases $\abs k<4$ and $\abs k>4$. Other cases of slit mappings in the upper half plane are presented in \cite{pro} by D. Prokhorov, A. Zakharov and A. Zherdev. Since the geometry of the slits will be the main topic of this text, we must note the work of C. Wong \cite{won}, according to which, a driving function that is Lipschitz continuous with exponent in $(\frac{1}{2},1]$ produces a curve that grows vertically from the real line.
	
Our aim in this work is to present explicitily given, multi-slit Loewner flows both in the disc and in the upper half plane, by solving their corresponding PDE's. Therefore, the first step in our study, is to write down the Riemann maps produced by particularly chosen driving functions and the second step is to describe their geometry. Finally, we present the close relation of the particular maps to semigroup theory for holomorphic self maps of the unit disc, which allows to visualize a specific class of Loewner PDE's. We outline the structure of the text below.

		\section{Outline of the paper and preliminaries}
	 
	 We begin by presenting a couple of preliminary results about sprirallike domains, that are necessary for the main ideas of the third and fourth section of this text. A \textit{logarithmic spiral of angle} $\psi\in(-\frac{\pi}{2},\frac{\pi}{2})$ in the complex plane is defined as the curve with parameterization $S: w=w_0 \text{exp}(-e^{-i\psi}t), -\infty\le t\le\infty$, for some complex number $w_0\neq0$.

\begin{define}
A simply connected domain $D$, that contains the origin, is said to be $\psi$-spirallike (with respect to zero), if for any point $w_0\in D$, the logarithmic spiral $S: w=w_0\text{exp}(-e^{i\psi}t), 0\le t\le\infty$ is contained in $D$.
\end{define}

\begin{define}
A  univalent function $f\in H(\mathbb{D})$, with $f(0)=0$, is said to be $\psi$- spirallike if it maps the unit disc onto a $\psi$-spirallike domain $D$.
\end{define}
Note that $0$-spirals are straight lines emanating from the origin and expanding to infinity. We refer to $0$-spirallike domains/functions as starlike domains/functions. The following theorem gives an analytic characterization of spirallike mappings. A detailed proof is found in \cite{dur}, paragraph 2.7.
\begin{theo}\label{spiralliketheo}
Let $f\in H(\mathbb{D})$, with $f'(0)\neq0$ and $f(z)=0$ if and only if $z=0$. Then $f$ is $\psi$-spirallike, if and only if 
$$\text{Re}\left(e^{-i\psi}\dfrac{zf'(z)}{f(z)}\right)>0$$
for all $z\in\mathbb{D}$.
\end{theo}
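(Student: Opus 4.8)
\emph{Strategy and setup.} The theorem is classical (see \cite{dur}); the plan is to prove the two implications in parallel, by following the images $\Gamma_{r}:=f(\{|z|=r\})$ of the circles $|z|=r$, $0<r<1$, and by using that the $\psi$-logarithmic spiral through a point $w_{0}$ is exactly the forward trajectory $t\mapsto w_{0}e^{-e^{i\psi}t}$ of the holomorphic vector field $V(w):=-e^{i\psi}w$; since $|\psi|<\frac{\pi}{2}$, these trajectories fill $\mathbb{C}\setminus\{0\}$, tend to $0$ as $t\to+\infty$, and leave every bounded set as $t\to-\infty$. Put $P(z):=e^{-i\psi}\,zf'(z)/f(z)$. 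Because $f(z)=0\Leftrightarrow z=0$ and $f'(0)\neq0$, $P$ is holomorphic on $\mathbb{D}$ with $P(0)=e^{-i\psi}$, so $\text{Re}\,P(0)=\cos\psi>0$. The key computation: by the argument principle $\Gamma_{r}$ winds once around $0$, so a continuous branch of $\log f(re^{i\theta})$ gains $2\pi i$ over one period; differentiating, $\frac{d}{d\theta}\log f(re^{i\theta})=i\,e^{i\psi}P(re^{i\theta})$, whence the ``spiral coordinate'' $M_{r}(\theta):=\text{Im}\big(e^{-i\psi}\log f(re^{i\theta})\big)$---which is constant along $\psi$-spirals and has gradient transverse to them---satisfies $M_{r}'(\theta)=\text{Re}\,P(re^{i\theta})$ and $M_{r}(2\pi)-M_{r}(0)=2\pi\cos\psi$. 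Finally, whenever $\Gamma_{r}$ is a Jordan curve bounding a domain $D_{r}\ni 0$, the sign of $M_{r}'$ records whether $V$ points into $D_{r}$ along $\Gamma_{r}$ (equivalently, whether $\text{Re}\,P\ge0$ on $|z|=r$), because the outward normal of $\Gamma_{r}$ at $f(re^{i\theta})$ has direction $zf'(z)$.

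\emph{Sufficiency.} Assume $\text{Re}\,P>0$ on $\mathbb{D}$ (so in particular $zf'/f=e^{i\psi}P\neq0$, hence $f'\neq0$). Then each $M_{r}$ is strictly increasing with total increment $2\pi\cos\psi$; if $f(re^{i\theta_{1}})=f(re^{i\theta_{2}})$ with $0\le\theta_{1}<\theta_{2}<2\pi$, then $M_{r}(\theta_{2})-M_{r}(\theta_{1})$ is a nonzero integer multiple of $2\pi\cos\psi$ lying in $(0,2\pi\cos\psi]$, which together with strict monotonicity forces $\theta_{1}=0$ and $\theta_{2}=2\pi$, a contradiction. Hence $\Gamma_{r}$ is a Jordan curve, and by the argument principle $f$ maps $\{|z|<r\}$ bijectively onto $\text{int}\,\Gamma_{r}$; since every point of $\mathbb{D}$ lies in some $\{|z|<r\}$, $f$ is univalent on $\mathbb{D}$. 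Moreover $\text{Re}\,P>0$ says $V$ points strictly inward along $\Gamma_{r}$, so the forward $\psi$-spiral from any point of $\text{int}\,\Gamma_{r}$ cannot cross $\Gamma_{r}$ and, converging to $0\in\text{int}\,\Gamma_{r}$, stays in $\text{int}\,\Gamma_{r}=f(\{|z|<r\})\subseteq f(\mathbb{D})$. Since every point of $f(\mathbb{D})$ lies in some such set, $f(\mathbb{D})$ is $\psi$-spirallike, i.e.\ $f$ is $\psi$-spirallike.

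\emph{Necessity.} Assume $f$ is $\psi$-spirallike; then $f$ is univalent and $e^{-e^{i\psi}t}f(\mathbb{D})\subseteq f(\mathbb{D})$ for all $t\ge0$, so $g_{t}:=f^{-1}\!\big(e^{-e^{i\psi}t}f(\cdot)\big)$ is a holomorphic self-map of $\mathbb{D}$ fixing $0$. The Schwarz lemma gives $|g_{t}(z)|\le|z|$, hence $e^{-e^{i\psi}t}f(\{|z|<r\})\subseteq f(\{|z|<r\})$ for every $r$; so each Jordan domain $D_{r}:=f(\{|z|<r\})$---whose boundary $\Gamma_{r}$ is analytic since $f'\neq0$---is $\psi$-spirallike, and therefore $\overline{D_{r}}$ is forward-invariant under the flow of $V$. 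This forces $V$ to point into, or along, $D_{r}$ at each point of $\Gamma_{r}$, which by the key computation is $\text{Re}\,P\ge0$ on $|z|=r$; letting $r\to1$ gives $\text{Re}\,P\ge0$ on $\mathbb{D}$. As $\text{Re}\,P$ is harmonic with $\text{Re}\,P(0)=\cos\psi>0$, the minimum principle upgrades this to $\text{Re}\,P>0$ throughout $\mathbb{D}$.

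\emph{Main obstacle.} The differentiation producing the formula for $M_{r}$ is routine; the real content is the two-sided geometric equivalence between ``$V$ points inward along $\Gamma_{r}$'' and ``$\text{int}\,\Gamma_{r}$ is forward-invariant under the spiral flow'', which rests on the qualitative picture of $\psi$-spirals as complete flow lines accumulating only at $0$ and at $\infty$ (and, at a boundary point, on $\partial D_{r}$ being $C^{1}$). A secondary care point is that in the sufficiency direction one must first establish that $\Gamma_{r}$ is a Jordan curve---this is exactly where the strict monotonicity of $M_{r}$ is used---before invoking planar topology or the argument principle for values $\neq0$.
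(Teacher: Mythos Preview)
The paper does not actually prove this theorem; it simply cites Duren \cite{dur}, \S2.7, for a detailed proof. Your argument is essentially the classical one found there---the spiral coordinate $M_{r}(\theta)=\mathrm{Im}\big(e^{-i\psi}\log f(re^{i\theta})\big)$, the argument principle to get univalence and identify $\mathrm{int}\,\Gamma_{r}$, the Schwarz lemma to show each $D_{r}$ is itself spirallike in the necessity direction, and the minimum principle to upgrade $\mathrm{Re}\,P\ge 0$ to strict positivity---and it is correct.
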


Although spirallike functions usually refer to analytic functions of the disc, we can easily transfer the preceding definition on functions of the upper half plane. 
\begin{define}
A  univalent function $f\in H(\mathbb{H})$, with $f(\beta)=0$ for some $\beta\in\mathbb{H}$, is said to be $\psi$-spirallike (with respect to $\beta$), if it maps the upper half plane onto a $\psi$-spirallike domain $D$.
\end{define}
\begin{proposition} \label{spirallikeinH}
     Let $f\in H(\mathbb{H})$, with $f'(\beta)\neq0$ and $f(z)=0$ if and only if $z=\beta$. Then, $f$ is $\psi$-spirallike, if and only if 
$$\text{Im}\left(e^{-i\psi}\dfrac{(z-\beta)(z-\bar{\beta})f'(z)}{f(z)}\right)>0$$
for all $z\in\mathbb{H}$.   
\end{proposition}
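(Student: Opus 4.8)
The plan is to reduce Proposition~\ref{spirallikeinH} to the disc characterization of Theorem~\ref{spiralliketheo} by transferring everything through a Cayley-type conformal map. First I would fix the Möbius map $C:\mathbb{D}\to\mathbb{H}$ with $C(0)=\beta$; the standard choice is $C(z)=\dfrac{\beta-\bar\beta z}{1-z}$, which sends $0$ to $\beta$ and is a biholomorphism of $\mathbb{D}$ onto $\mathbb{H}$. Then I would set $F:=f\circ C\in H(\mathbb{D})$ and observe that $F$ is univalent, $F(0)=f(\beta)=0$, $F(z)=0$ iff $z=0$, and $F'(0)=f'(\beta)C'(0)\neq0$. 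Crucially, $F$ maps $\mathbb{D}$ onto the same domain $D=f(\mathbb{H})$, so by the two definitions of $\psi$-spirallikeness, $f$ is $\psi$-spirallike (with respect to $\beta$) if and only if $D$ is a $\psi$-spirallike domain if and only if $F$ is $\psi$-spirallike in the sense of the disc. Hence, by Theorem~\ref{spiralliketheo}, this is equivalent to $\operatorname{Re}\!\left(e^{-i\psi}\dfrac{wF'(w)}{F(w)}\right)>0$ for all $w\in\mathbb{D}$.

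The second step is to rewrite this quantity in terms of $f$. Chain rule gives $F'(w)=f'(C(w))C'(w)$, so with $z=C(w)$,
\begin{equation}
\frac{wF'(w)}{F(w)}=\frac{w\,C'(w)}{C(w)-\beta}\cdot\frac{(C(w)-\beta)f'(C(w))}{f(C(w))}=\frac{wC'(w)}{C(w)-\beta}\cdot\frac{(z-\beta)f'(z)}{f(z)}.\label{eq:transfer}
\end{equation}
Thus everything hinges on computing the scalar factor $\dfrac{wC'(w)}{C(w)-\beta}$. A direct computation with $C(w)=\dfrac{\beta-\bar\beta w}{1-w}$ yields $C(w)-\beta=\dfrac{(\beta-\bar\beta)w}{1-w}$ and $C'(w)=\dfrac{\beta-\bar\beta}{(1-w)^2}$, so
\begin{equation}
\frac{wC'(w)}{C(w)-\beta}=\frac{1}{1-w}.\label{eq:factor}
\end{equation}
On the other hand, inverting $z=C(w)$ gives $w=\dfrac{z-\beta}{z-\bar\beta}$, hence $1-w=\dfrac{\bar\beta-\beta}{z-\bar\beta}$ and therefore $\dfrac{1}{1-w}=\dfrac{z-\bar\beta}{\bar\beta-\beta}$. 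Combining \eqref{eq:transfer}, \eqref{eq:factor} and this identity,
\begin{equation}
\frac{wF'(w)}{F(w)}=\frac{1}{\bar\beta-\beta}\cdot\frac{(z-\beta)(z-\bar\beta)f'(z)}{f(z)}=\frac{1}{-2i\operatorname{Im}\beta}\cdot\frac{(z-\beta)(z-\bar\beta)f'(z)}{f(z)}.\label{eq:final}
\end{equation}

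Finally I would feed \eqref{eq:final} into the disc criterion: since $\operatorname{Im}\beta>0$, multiplication by $\dfrac{1}{-2i\operatorname{Im}\beta}$ is (up to a positive scalar) multiplication by $i$, and $\operatorname{Re}(i\zeta)=-\operatorname{Im}(\zeta)$, equivalently $\operatorname{Re}\zeta>0\iff\operatorname{Im}(-i\zeta)>0$. Carrying the sign through, $\operatorname{Re}\!\left(e^{-i\psi}\dfrac{wF'(w)}{F(w)}\right)>0$ for all $w\in\mathbb{D}$ becomes exactly $\operatorname{Im}\!\left(e^{-i\psi}\dfrac{(z-\beta)(z-\bar\beta)f'(z)}{f(z)}\right)>0$ for all $z\in\mathbb{H}$, as $w\mapsto C(w)$ is a bijection $\mathbb{D}\to\mathbb{H}$. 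This closes the equivalence. The only genuinely delicate point is bookkeeping the constant factor and its sign in \eqref{eq:factor}–\eqref{eq:final}: one must be careful that $\beta-\bar\beta=2i\operatorname{Im}\beta$ has positive imaginary coefficient so that no spurious sign flip occurs, but this is a routine check rather than a conceptual obstacle; the rest is the standard transfer principle between $\mathbb{D}$ and $\mathbb{H}$.
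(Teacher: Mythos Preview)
Your approach is exactly the one the paper takes: reduce to Theorem~\ref{spiralliketheo} via the M\"obius map $T(z)=\dfrac{z-\beta}{z-\bar\beta}$ (your $C$ is $T^{-1}$), and the paper's proof is in fact just this one line. There is, however, a sign slip in your computation of $1-w$: one has
\[
1-w=1-\dfrac{z-\beta}{z-\bar\beta}=\dfrac{\beta-\bar\beta}{z-\bar\beta},
\]
not $\dfrac{\bar\beta-\beta}{z-\bar\beta}$, so the constant in your final displayed identity should be $\dfrac{1}{\beta-\bar\beta}=\dfrac{1}{2i\,\text{Im}\,\beta}$ rather than $\dfrac{1}{-2i\,\text{Im}\,\beta}$. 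With this correction the scalar factor is $-i$ (up to a positive multiple), and since $\text{Re}(-i\zeta)=\text{Im}\,\zeta$, the disc condition $\text{Re}\big(e^{-i\psi}\frac{wF'(w)}{F(w)}\big)>0$ translates correctly to $\text{Im}\big(e^{-i\psi}\frac{(z-\beta)(z-\bar\beta)f'(z)}{f(z)}\big)>0$. As written, your sign error would yield the reversed inequality; this is precisely the bookkeeping issue you flagged, not a conceptual gap.
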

\begin{proof}
 The result follows immediately from the disc case, by applying the Möbius transform $Tz=\frac{z-\beta}{z-\overline{\beta}}$, which maps $\mathbb{H}$ onto $\mathbb{D}$.
\end{proof}
\textbf{Basic results.}	  We start off by setting our configuration in the unit disc. Given some arbitrarily chosen points $\zeta_1 ,\dots,\zeta_n \in\partial\mathbb{D}$,  the weights $b_1 ,\dots,b_n>0$ and the exponent $a\in\mathbb{R}$, the role of the driving measures of the flow will be played by the measures 
	 
$$\mu_t:=\sum_{k=1}^{n}b_k\delta_{\{e^{iat}\zeta_k\}}.$$
	 Note that the flow is not normalized, since we do not demand the measures to be probablities; i.e, $b_1+\dots+b_n=1$. Our first result in paragraph 3.3 is outlined below.
	 
	 \begin{theo}\label{radialtheorem}
Assume the configuration above and consider the radial Loewner-Kufarev PDE in $\mathbb{D}\times[0,\infty)$, 
	 $$\dfrac{\partial f}{\partial t}(z,t)=-f'(z,t)z\sum_{k=1}^{n}b_k\dfrac{e^{iat}\zeta_k +z}{e^{iat}\zeta_k -z}$$
	 with initial value $f(z,0)=z$. Then, the Loewner flow is of the form 
	 $$f_a (z,t)=\phi^{-1}(e^{-(\sum_{k=1}^{n}b_k-ia)t}\phi(e^{-iat}z)),$$
	 where $\phi$ is an $(\text{Arccot}(\frac{a}{\sum_{k=1}^{n}b_k})-\frac{\pi}{2})$-spirallike function of $\mathbb{D}$.
	 
	 Furthermore, for each $k$, with $1\le k\le n$, the trace $\hat{\gamma}_a^{(k)} :=\{f_a (e^{iat}\zeta_k ,t)/\ t\ge0 \}$ is a smooth curve lying in $\mathbb{D}$ that starts perpendicularly from $\zeta_k$, spiralling about the origin. 
\end{theo}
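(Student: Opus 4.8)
The plan is to remove the time dependence of the driving measure, which is nothing but a rigid rotation. Write $B:=\sum_{k=1}^n b_k$. Since $\frac{e^{iat}\zeta_k+z}{e^{iat}\zeta_k-z}=\frac{\zeta_k+e^{-iat}z}{\zeta_k-e^{-iat}z}$, the driving function equals $p_0(e^{-iat}z)$ with the time-independent $p_0(w)=\sum_{k=1}^n b_k\frac{\zeta_k+w}{\zeta_k-w}$, for which $\text{Re}\,p_0>0$ on $\mathbb D$ and $p_0(0)=B$. First I would set $F(w,t):=f_a(e^{iat}w,t)$ and check, by a direct differentiation that uses the PDE for $f_a$, that $F$ solves the \emph{autonomous} Loewner--Kufarev equation $\partial_t F=-wF'(w,t)q(w)$, $F(w,0)=w$, with $q(w):=p_0(w)-ia$ and $\text{Re}\,q=\text{Re}\,p_0>0$. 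By uniqueness of the Loewner flow and the Berkson--Porta/Koenigs correspondence, $F$ is the flow of the elliptic semigroup with Denjoy--Wolff point $0$ generated by $G(w)=-wq(w)$, hence $F(w,t)=\phi^{-1}(e^{-\lambda t}\phi(w))$ with spectral value $\lambda=-G'(0)=q(0)=B-ia$ and Koenigs function $\phi$ (normalised by $\phi(0)=0$), which by Section 2 is $\text{Arg}(\lambda)$-spirallike. Undoing the substitution gives $f_a(z,t)=F(e^{-iat}z,t)$, and since $\text{Arg}(B-ia)=\text{Arccot}(a/B)-\tfrac\pi2$ this is the asserted formula together with the asserted spirallikeness.

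For the trace the starting point is $f_a(e^{iat}\zeta_k,t)=F(\zeta_k,t)=\phi^{-1}(e^{-\lambda t}\phi(\zeta_k))$, so $\hat{\gamma}_a^{(k)}$ is the $\phi^{-1}$-image of the logarithmic spiral issuing from the boundary value $\phi(\zeta_k)$. I would read off the local behaviour of $\phi$ at $\zeta_k$ from the Koenigs equation $\frac{z\phi'(z)}{\phi(z)}=\frac{\lambda}{q(z)}$: the rational function $q$ has a simple pole at $\zeta_k$ with residue $-2b_k\zeta_k$, so the right-hand side is holomorphic near $\zeta_k$ with a simple zero there; consequently $\phi$ continues analytically across $\zeta_k$, with $\phi(\zeta_k)\ne0$, $\phi'(\zeta_k)=0$ and, after differentiating the equation once, $\frac{\phi''(\zeta_k)}{\phi(\zeta_k)}=-\frac{\lambda}{2b_k\zeta_k^2}\ne0$. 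Thus $\phi$ is $2$-to-$1$ near $\zeta_k$, folding the boundary arc $\partial\mathbb D$ onto an analytic slit $\Gamma$ that terminates at $\phi(\zeta_k)$; from $\phi(z)-\phi(\zeta_k)=\tfrac12\phi''(\zeta_k)(z-\zeta_k)^2+O((z-\zeta_k)^3)$ one sees that $\Gamma$ leaves $\phi(\zeta_k)$ in the direction $-\phi''(\zeta_k)\zeta_k^2$, i.e. in the direction $\lambda\phi(\zeta_k)$.

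Now the spiral $t\mapsto e^{-\lambda t}\phi(\zeta_k)$ leaves $\phi(\zeta_k)$ with velocity $-\lambda\phi(\zeta_k)$, exactly opposite to $\Gamma$, so for small $t>0$ it lies in $\phi(\mathbb D)$, and then for all $t>0$ by the self-map property $e^{-\lambda s}\phi(\mathbb D)\subseteq\phi(\mathbb D)$. Hence $\hat{\gamma}_a^{(k)}$ is a real-analytic simple arc contained in $\mathbb D$ (injective because $\text{Re}\,\lambda>0$ and $\phi$ is univalent); as $t\to\infty$ it converges to $\phi^{-1}(0)=0$, and since $\phi^{-1}$ is conformal at the origin it winds about $0$ exactly as the spiral does, so it spirals into the origin (infinitely often when $a\ne0$). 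Finally, inverting the expansion of $\phi$ against $e^{-\lambda t}\phi(\zeta_k)-\phi(\zeta_k)=-\lambda t\,\phi(\zeta_k)+O(t^2)$ yields $f_a(e^{iat}\zeta_k,t)-\zeta_k=c_k\sqrt t+o(\sqrt t)$ with $c_k^2=-\frac{2\lambda\phi(\zeta_k)}{\phi''(\zeta_k)}=4b_k\zeta_k^2$; the branch keeping the arc inside $\mathbb D$ is $c_k=-2\sqrt{b_k}\,\zeta_k$, so the initial unit tangent of $\hat{\gamma}_a^{(k)}$ is the inward radial direction $-\zeta_k$ — the trace emanates perpendicularly from $\partial\mathbb D$ — and reparametrising by $s=\sqrt t$ makes the arc smooth up to its endpoint $\zeta_k$.

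The conjugation step and the algebra with the Koenigs equation are routine. The crux, and the part I expect to demand the most care, is the boundary analysis at each $\zeta_k$: rigorously deriving the analytic continuation of $\phi$ past $\zeta_k$ with the precise vanishing order from the pole of $q$, and then using it to show that the spiral $e^{-\lambda t}\phi(\zeta_k)$ genuinely enters $\phi(\mathbb D)$ for $t>0$ rather than running along the boundary slit $\Gamma$ — which is essentially the same computation that produces the orthogonal tangent — while tracking the correct square-root branch throughout.
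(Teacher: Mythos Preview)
Your argument is correct and is a genuinely different route from the paper's. The paper proceeds explicitly: after the same rotation $v=e^{-iat}w$ it separates variables in the autonomous ODE, performs a partial-fraction decomposition, and proves a key interlacing lemma showing that the roots $\xi_1,\dots,\xi_n$ of the auxiliary polynomial $Q-iaR$ lie on $\partial\mathbb D$, one in each arc $(\zeta_k,\zeta_{k+1})$. This yields an explicit product formula
$\phi(z)=z\bigl(\prod_{k}(z-\xi_k)^{2\alpha_k}\bigr)^{e^{-i\text{Arg}\xi_{\hat a}/2}}$,
from which spirallikeness is verified by hand and the boundary image $\phi(\partial\mathbb D)$ is identified as $n$ logarithmic spiral arcs with tips $\phi(\zeta_k)$. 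The spiralling of the traces is then obtained by analysing the radial and angular parts of the Loewner ODE, and orthogonality at $\zeta_k$ by an elementary limiting argument in the spirit of the single-slit case.

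You bypass all of this by invoking the Berkson--Porta/Koenigs correspondence directly: existence and spirallikeness of $\phi$ come for free, and the boundary behaviour at $\zeta_k$ is read off from the Koenigs ODE $z\phi'/\phi=\lambda/q$ via the simple pole of $q$. Your square-root expansion $f_a(e^{iat}\zeta_k,t)-\zeta_k=-2\sqrt{b_k}\,\zeta_k\sqrt t+o(\sqrt t)$ is a cleaner route to orthogonality than the paper's limit computation. The trade-off is that the paper's approach produces the explicit formula for $\phi$ (and the location of the $\xi_k$), which is used elsewhere for the detailed geometric pictures; your approach is more conceptual and is essentially what the paper records afterwards as a remark (the ``spiroid flow'' proposition), promoted to the main argument. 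The one place you correctly flag as delicate---showing the spiral $e^{-\lambda t}\phi(\zeta_k)$ enters $\phi(\mathbb D)$ rather than running along the boundary slit---is handled in the paper by the explicit description of $\phi(\partial\mathbb D)$; your local two-to-one folding argument suffices for the statement as given, but the paper's global picture makes this step transparent.
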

	 Motivated by the work of J. Lind, D. Marshall and S. Rohde, our next objective is to transfer the preceding result to the upper half plane and furthermore, generalize their single-slit construction (see \cite{mar}, section 3). According to this, a curve generated by the driving function $k\sqrt{1-t}$, $t\in[0,1)$ spirals about some point $\beta=\beta(k)\in\mathbb{H}$, or intersects the real line as $t\rightarrow1$ at some point $\rho_0=\rho_0(k)\in\mathbb{R}$ tangentially or non-tangentially, when $\abs k<4$, or $\abs k=4$, or $\abs k>4$ respectively. It turns out that for $n$ many slits, the same geometry can take place, for all curves, while there is one more case, in which the intersection with $\mathbb{R}$ is vertical.

	 We set our configuration by choosing real points $k_1,\dots,k_n$ in increasing order and $b_1,\dots,b_n>0$ some weights, thus the driving measures are written in the form
	 $$\nu_t=\sum_{j=1}^{n}b_j\delta_{\{k_j\sqrt{1-t}\}}$$
	 for $t\in[0,1)$. In our case, where we have multiple real points, the conditions which describe the geometrical behaviour of the curves, are adjusted according to the auxiliary polynomial 
	 $$P(z):=z\prod_{j=1}^{n}(z-k_j)+\sum_{j=1}^{n}4b_j \prod_{i\neq j}(z-k_i).$$
	 It will be obvious later why we choose this polynomial. However, we can already see that $P$ has $n-1$ real roots, say $\lambda_j\in(k_j,k_{j+1})$, because $P(k_j)=(-1)^{n-j}$. Observe that if $n=1$, then $P$ has two complex and conjugate roots for $\abs k<4$, a double real root if $\abs k=4$ and two real and simple roots if $\abs k>4$, in agreement with \cite{mar}. Each of the preceding cases, produce the geometry described in the preceding paragraph. The main result of chapter 4 is formulated in the following theorem.
\begin{thm}\label{chordaltheorem}
Assume the configuration above and consider the chordal Loewner-Kufarev PDE in $\mathbb{H}\times[0,1)$,
	      
	      $$\dfrac{\partial f}{\partial t}(z,t)=-f'(z,t)\sum_{j=1}^{n}\dfrac{2b_j}{z-k_j\sqrt{1-t}}$$
	      with initial value $f(z,0)=z$. Then,
	      \begin{enumerate}
	          \item if $P$ has a complex root $\beta\in\mathbb{H}$, then the Loewner flow is of the form $$f(z,t)=h^{-1}((1-t)^{\alpha e^{-i\psi}}h((1-t)^{-\frac{1}{2}}z))$$
	          where $h$ maps the upper half plane onto the complement of $n$ logarithmic spirals of angle $-\psi$.

	          \item if $P$ has $n+1$ distinct real roots, then the Loewner flow is of the form $$f(z,t)=h^{-1}((1-t)^{\alpha}h((1-t)^{-\frac{1}{2}}z))$$
	          where $h$ is a Schwarz-Cristoffel map of the upper half plane, that maps $\mathbb{H}$ onto $\mathbb{H}$ minus $n$ line segments emanating from the origin.
	          \item if $P$ has a multiple root, either double or triple, then the Loewner flow is of the form $$f(z,t)=h^{-1}(\dfrac{1}{2}\log(1-t)+h((1-t)^{-\frac{1}{2}}z))$$
	          where $h$ is a univalent function map of the upper half plane, that maps $\mathbb{H}$ onto:\\
	          (a) a half plane determined by a translation of $\mathbb{R}$, minus $n$ half-lines parallel to $\mathbb{R}$, if the root is double,\\
	          (b) the complement of $n$ half-lines parallel to $\mathbb{R}$, if the root is triple.  
	      \end{enumerate}
       
            Furthermore, for all $1\le j\le n$, the trajectories of the driving functions $\hat{\gamma}^{(j)}:=\{f(k_j\sqrt{1-t},t)/\ t\in[0,1)\}$, are smooth curves of $\mathbb{H}$ starting at $k_j$, that spiral about the point $\beta$ in case $(1)$, intersect $\mathbb{R}$ at one of the real roots non-tangentially in case $(2)$ and intersect $\mathbb{R}$ at the multiple root tangentially in case $(3a)$ or orthogonally in case $(3b)$.	 
\end{thm}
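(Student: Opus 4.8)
The plan is to exploit the self-similar structure of the driving measure $\nu_t$, whose masses sit at the points $k_j\sqrt{1-t}$: this is precisely the scaling that renders the evolution \emph{autonomous}, turning the flow into a one-parameter semigroup of $\mathbb{H}$, to which the Koenigs--function machinery applies.

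\medskip
\emph{Step 1 (reduction to a semigroup).} Introduce the logarithmic time $s$ via $t=1-e^{-2s}$ (so $t\in[0,1)\leftrightarrow s\in[0,\infty)$ and $(1-t)^{1/2}=e^{-s}$) and set $H_s(w):=f(e^{-s}w,\,t)$ (equivalently $H_s=G_s^{-1}$ with $G_s(z)=(1-t)^{-1/2}g_t(z)$, $g_t=f_t^{-1}$). A chain-rule computation turns the chordal Loewner PDE into the \emph{time-independent} equation
$$\frac{\partial H}{\partial s}(w,s)=-H'(w,s)\,V(w),\qquad H(w,0)=w,\qquad V(w):=w+\sum_{j=1}^{n}\frac{4b_j}{w-k_j}=\frac{P(w)}{\prod_{j=1}^{n}(w-k_j)},$$
which is exactly why $P$ is the relevant polynomial. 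Since the Herglotz field $-V$ no longer depends on $s$, $(H_s)_{s\ge0}$ is a one-parameter semigroup of holomorphic self-maps of $\mathbb{H}$ with infinitesimal generator $\mathcal G=-V$ (the Berkson--Porta picture, cf.\ \cite{bra}), and unwinding the substitution gives the master identity
$$f(z,t)=H_{-\frac12\log(1-t)}\big((1-t)^{-1/2}z\big).$$

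\medskip
\emph{Step 2 (trichotomy).} The null points of $\mathcal G$ in $\overline{\mathbb{H}}$ are the zeros of $P$. As $\deg P=n+1$, $P$ has real coefficients, and $P(k_j)=(-1)^{n-j}$ forces $n-1$ simple real zeros $\lambda_j\in(k_j,k_{j+1})$, the remaining pair of zeros is either a conjugate pair $\beta,\overline\beta$ (case $(1)$), or two further real zeros (case $(2)$), or a double or triple real zero $\rho_0$ (case $(3)$). Accordingly $(H_s)$ is elliptic with interior Denjoy--Wolff point $\beta$; non-elliptic (hyperbolic) with boundary Denjoy--Wolff point at a real zero $\tau$ of $P$; or parabolic with boundary Denjoy--Wolff point at $\rho_0$. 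The spectral value $\mu$ is encoded in the linear part of $\mathcal G$ at the Denjoy--Wolff point ($\mu=-\mathcal G'(\beta)=V'(\beta)$ in case $(1)$, $\mu$ real positive in case $(2)$), and a short argument shows the semigroup is not a group, so $\operatorname{Re}\mu>0$ in cases $(1)$--$(2)$. Which boundary zero of $P$ is the Denjoy--Wolff point, and the argument of $\mu$, are determined by an elementary sign analysis of $P$, $P'$ and the residues below.

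\medskip
\emph{Step 3 (Koenigs function and explicit formula).} Let $h$ be the Koenigs function of $(H_s)$, normalised by $h(\beta)=0$ in case $(1)$, $h(\tau)=0$ in case $(2)$, and so that its pole lies at $\rho_0$ in case $(3)$. Differentiating the linearisation identity ($h\circ H_s=e^{-\mu s}h$ in cases $(1)$--$(2)$, $h\circ H_s=h-s$ in case $(3)$) and substituting $\mathcal G=-P/\prod_j(\cdot-k_j)$ gives
$$\frac{h'(z)}{h(z)}=\frac{\mu\prod_{j=1}^{n}(z-k_j)}{P(z)}\quad(\text{cases }1,2),\qquad h'(z)=\frac{-\prod_{j=1}^{n}(z-k_j)}{P(z)}\quad(\text{case }3).$$
Expanding the right-hand side into partial fractions over the zeros of $P$ and integrating produces $h$ in closed form: in case $(1)$, $h(z)=c\,(z-\beta)^{\mu r_\beta}(z-\overline\beta)^{\overline{\mu r_\beta}}\prod_{i=1}^{n-1}(z-\lambda_i)^{a_i}$ with $a_i\in\mathbb{R}$ and $\arg\mu=-\psi$; in case $(2)$ the same shape with all exponents real, i.e.\ a Schwarz--Christoffel primitive; in case $(3)$, $h$ has a simple resp.\ double pole at $\rho_0$ according as $\rho_0$ is a double resp.\ triple zero of $P$. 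Univalence of $h$ is automatic for a Koenigs function; that its image is the domain claimed --- the complement of $n$ logarithmic spirals of angle $-\psi$; $\mathbb{H}$ minus $n$ segments from $0$; a translated half-plane, resp.\ the whole plane, with $n$ horizontal half-lines deleted --- follows by feeding $e^{-i\psi}(z-\beta)(z-\overline\beta)h'(z)/h(z)=e^{-i\psi}\mu\prod_j(z-k_j)/\prod_i(z-\lambda_i)$ into the spirallike criteria of Theorem~\ref{spiralliketheo} and Proposition~\ref{spirallikeinH} (its positivity in $\mathbb{H}$ resting on the interlacing of the $\lambda_i$ with the $k_j$), together with a prime-end count that fixes the number of slits at $n$. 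Inserting the linearisation into the master identity and using $e^{-\mu s}=(1-t)^{\mu/2}$ and $-s=\tfrac12\log(1-t)$ gives precisely the stated forms of $f$, with $\alpha=|\mu|/2$.

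\medskip
\emph{Step 4 (geometry of the traces), and the main obstacle.} The decisive simplification is
$$f\big(k_j\sqrt{1-t},t\big)=H_s\big((1-t)^{-1/2}k_j\sqrt{1-t}\big)=H_s(k_j),$$
so $\hat\gamma^{(j)}$ is exactly the forward orbit $\{H_s(k_j):s\ge0\}$ of the boundary point $k_j$ under the semigroup. Since $k_j$ is a simple \emph{pole} of $\mathcal G$, with $\mathcal G(w)\sim-4b_j/(w-k_j)$ near $k_j$, integrating $\partial_s H_s(k_j)=\mathcal G(H_s(k_j))$ near $s=0$ gives $H_s(k_j)=k_j+i\sqrt{8b_j\,s}\,(1+o(1))$: each trace is a smooth curve leaving $\mathbb{R}$ perpendicularly at $k_j$. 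As $s\to\infty$ the orbit tends to the Denjoy--Wolff point; in case $(1)$ to $\beta$, and since $h(H_s(k_j))=e^{-\mu s}h(k_j)$ with $\operatorname{Im}\mu\ne0$ (i.e.\ $\psi\ne0$) its argument runs to $\pm\infty$, so it spirals about $\beta$; in case $(2)$ to the real Denjoy--Wolff point non-tangentially, the standard behaviour of orbits of hyperbolic semigroups; in case $(3)$ to $\rho_0$, the approach being tangential when $\rho_0$ is a double zero and orthogonal when it is triple, which one reads off by integrating $\partial_s v=c\,v^{m}+\cdots$ near $v=w-\rho_0=0$ with $m=2$, resp.\ $m=3$, and extracting $\arg c$ from the sign analysis. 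The genuinely delicate points --- and where the real work lies --- are the last two: the boundary / prime-end analysis identifying the images of $h$ as exactly the stated slit domains, and the sign bookkeeping for $P$, $P'$ and its residues that pins down the Denjoy--Wolff point, the argument of the spectral value, and hence the tangential-versus-orthogonal dichotomy in case $(3)$.
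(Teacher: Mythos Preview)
Your strategy is essentially the paper's, re-packaged in semigroup language. The paper also reduces to an autonomous problem via $v=(1-t)^{-1/2}w$ (your $H_s$ with $s=-\tfrac12\log(1-t)$), separates variables to obtain $\prod_j(v-k_j)/P(v)\,dv = \tfrac{dt}{2(1-t)}$, and then splits into the same trichotomy according to the roots of $P$; the function $h$ is obtained by integrating the partial-fraction decomposition, exactly as you do in Step~3. The difference is one of framing: you invoke the Koenigs function of the semigroup $(H_s)$ up front and deduce univalence of $h$ abstractly, whereas the paper constructs $h$ by explicit antidifferentiation and proves univalence in each case separately (spirallike criterion in case~(1), a M\"obius change reducing to Schwarz--Christoffel in case~(2), and a convexity argument $\mathrm{Im}\,(h\circ T^{-1})'>0$ after a suitable M\"obius $T$ in case~(3)). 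The paper only records the semigroup interpretation \emph{a posteriori}, in Section~4.5. Your packaging is cleaner conceptually; the paper's is more self-contained.

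Where your sketch is thinner than the paper is precisely at the two places you flag. First, the spirallike criterion gives only that $h(\mathbb H)$ is $(-\psi)$-spirallike; identifying it as the complement of exactly $n$ spirals (and the analogous slit descriptions in cases~(2)--(3)) requires the explicit boundary computation the paper carries out (monotonicity of the real function $S(x)$ in~(4.13), images of the intervals $(\lambda_{j-1},k_j)$, $(k_j,\lambda_j)$, etc.), not merely a prime-end count. Second, for the approach angles at $\rho_0$ in case~(3), the paper does not linearise the ODE at the Denjoy--Wolff point but instead uses a direct geometric argument: it takes small tangent discs $D(\rho_0\pm i\delta,\delta)$ (double root) or $D(\rho_0\pm\varepsilon,\varepsilon)$ (triple root), computes $\mathrm{Im}\,h$ or $\mathrm{Re}\,h$ on their boundaries, and shows the images avoid the half-lines carrying the traces, forcing tangential resp.\ orthogonal incidence. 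Your local-ODE heuristic $\partial_s v\sim c v^m$ points in the right direction but does not by itself pin down the angle without the sign analysis you allude to; the paper's tangent-disc argument is what actually closes this. One small correction: in case~(2) the Koenigs-type $h$ has a \emph{pole} (not a zero) at the Denjoy--Wolff root $\rho_1$ and a zero at the other root $\rho_2$; your normalisation ``$h(\tau)=0$'' is the wrong way round.
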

We can already observe that the flows in the two theorems above have a similar form to the semigroups described in the introduction, therefore it makes sense to underline these properties as remarks in the coming paragraphs.

	\section{Radial Spiroid flows}
	 
	 \subsection{One spiral.} The simplest time-dependent driving function to think of would be the unimodular function $\zeta(t)=e^{it}$ and therefore we consider the initial value  equation 
	 \begin{equation}
	 \dfrac{\partial f}{\partial t}(z,t)=-f'(z,t)z\dfrac{e^{it}+z}{e^{it}-z},\quad f(z,0)=z \label{eit}
	 \end{equation}
	 for all $|z|<1$ and $t\ge0$. In this section, we will describe and visualize the solution to $(\ref{eit})$. It suffices to solve the corresponding Loewner ODE
  \begin{equation}
      \dfrac{dw}{dt}(z,t)=w(z,t)\dfrac{e^{it}+w(z,t)}{e^{it}-w(z,t)}\label{eitODE}
  \end{equation}
 with $w(z,0)=z$. Introducing the transform $v=e^{-it}w$, we solve the equation
	 $$\dfrac{dv}{dt}(z,t)=(1+i)v(z,t)\dfrac{v(z,t)-i}{1-v(z,t)}$$
	for all $|z|<1$ and $t\ge0$. This equation can be solved by separation of variables and by integrating, we deduce that the Loewner flow satisfies the functional equation
	 \begin{equation}
	 	\phi(f(z,t))=e^{(i-1)t}\phi(e^{-it}z)\label{phiof}
	 \end{equation}
	 for all $|z|<1$ and $t\ge0$, with $\phi(z)=z(z-i)^{i-1}$. We observe that $\phi'(0)\neq0$,  $\phi(z)=0$ if and only if $z=0$ and 
	 \begin{equation}\label{spirallike1}
	 Re\left(e^{i\pi/4}\dfrac{z\phi'(z)}{\phi(z)}\right)=Re\left(e^{i\pi/4}i\dfrac{1-z}{i-z}\right)>0
	 \end{equation}
	 for all $|z|<1$. By theorem $\ref{spiralliketheo}$, this implies that $\phi$ is a $(-\frac{\pi}{4})$-spirallike  function of $\mathbb{D}$. In particular, $\phi$ is univalent and as a result we can solve in $(\ref{phiof})$ for $f(z,t)$. Of course, it is challenging to determine the inverse of $\phi$, however we shall write the 'explicit' formula for the flow as 
	 \begin{equation}\label{flow1}
	 f(z,t)=\phi^{-1}(e^{(i-1)t}\phi(e^{-it}z)),\quad \forall(z,t)\in\mathbb{D}\times[0,\infty).
	 \end{equation}
	 
	 Let us, now, study the behaviour of $\phi$. A direct computation gives that 
	 $$\phi(e^{i\theta})=\phi(1)\exp[-e^{-i\frac{\pi}{4}}\sqrt{2}/2(\log(1-\sin(\theta))+\theta)]$$
	 for $0\le\theta<2\pi$, where we have used the identity $1-\sin(\theta)=2\sin(\frac{\pi}{4}-\frac{\theta}{2})\cos(\frac{\pi}{4}-\frac{\theta}{2})$, thus,
  $$\text{Arg}(e^{i\theta}-i)=-\text{Arccot}\left(\dfrac{\cos(\theta)}{1-\sin(\theta)}\right)=-\text{Arccot}\left(\dfrac{\cos(\frac{\pi}{4}-\frac{\theta}{2})}{\sin(\frac{\pi}{4}-\frac{\theta}{2})}\right)=-\frac{\pi}{4}+\frac{\theta}{2}.$$
  Hence $\phi(\partial\mathbb{D})$ is the part of the logarithmic spiral of angle $-\frac{\pi}{4}$, 
	 $$S:\ w=\phi(1)\exp(e^{-i\pi/4}t),\quad -\infty\le t\le\infty,$$
	 joining $\phi(1)$ with $\infty$. We shall denote by $S_+ $ this part of $S$ and by $S_-$ the remaining part $S\setminus S_+ $, that joins $\phi(1)$ with the origin. Therefore, for all $t\ge0$, $e^{(i-1)t}\phi(\partial\mathbb{D})$ denotes the extension of $\phi(\partial\mathbb{D})$ to the point $e^{(i-1)t}\phi(1)$ along the spiral $S_{-}$.
	 
	 \begin{figure}[ht]
	 	\centering
	 	\includegraphics[width=0.4\linewidth]{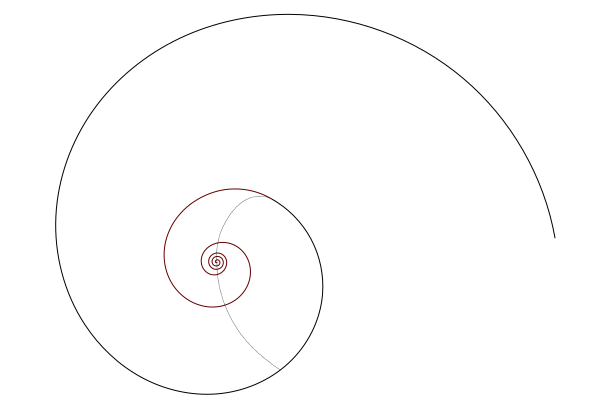}
	 	\caption{Logarithmic spiral of angle $-\pi/4$. The image $\phi(\partial\mathbb{D})$ is denoted by the black part. The dotted line denotes the image $\phi([-1,1])$.}
	 	\label{fig:spiralbr}
	 \end{figure}
	 By $(\ref{flow1})$, applying the inverse of $\phi$ onto $\mathbb{C}\setminus e^{(i-1)t}\phi(\partial\mathbb{D})$ we deduce that $f(\cdot,t)$ maps the unit disc onto $\mathbb{D}$ minus a slit lying in $\mathbb{D}$ with endpoint at $1$, corresponding to the preimage of the segment $S_t :=\{e^{(i-1)x}\phi(1)/ \ 0\le x\le t\}$ of $S_-$. Our purpose is to determine the behaviour of the orbit of the tip point $f(e^{it},t)$, in terms of its winding around the origin. As pointed out above, it is not possible to deduce a closed formula for $f(e^{it},t)$ with time $t$, however, we are able to give a geometric description. 
	 
	 For this reason, consider the image of the diameter $[-1,1]$,
	 $$\phi(x)=\dfrac{x}{\sqrt{x^2 +1}}e^{\mathrm{Arccot}(x)}\exp[i(\log(\sqrt{x^2 +1})+\mathrm{Arccot}(x))]$$
	 so $\text{Arg}(\phi(x))$ decreases from $\text{Arg}(\phi(-1))$ to $-\pi/2$ in $[-1,0)$ and from $\pi/2$ to $\text{Arg}(\phi(1))$ in $(0,1]$. Thus, the imaginary axis acts as the tangent line to $\phi([-1,1])$ at the origin, as seen in figure $\ref{fig:spiralbr}$. Also, because
  $$\phi'(x)=-i\dfrac{1-x}{x^2+1}e^{\text{Arccot}(x)}\exp[i(\log\sqrt{x^2+1}+2\text{Arccot}(x))]$$
  we have that $\text{Arg}(\phi'(x))\rightarrow\frac{1}{2}\log2=S'(0)$, as $x\rightarrow1$ and as a result, $\phi([0,1])$ intersects the spiral $S$, tangentially at the point $\phi(1)$. Note then, that the interior of the curve consisting of $\phi([-1,1])$ and the part of $S_+$ joining its endpoints, is the image of the lower half-disc. Obviously, it contains as many parts of $S_t $, as the number of times $S_t $ intersects $\phi([-1,0])$, while the intersection points accumulate in the origin. Furthermore, since the spiral $S$ winds around the origin infinitely many times, $\phi([0,1])$ intersects $S_t $ as many times as it winds around $0$ and because of that, as $t$ tends to infinity, then the tip point $f(e^{it},t)$ winds around the origin infinitely many times and tends to zero.
	 \begin{figure}[ht]
	 	\centering
	 	\includegraphics[width=0.3\linewidth]{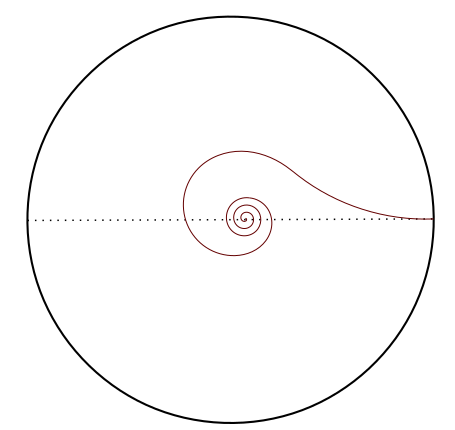}
	 	\caption{Evolution of the slit.}
	 	\label{fig:spiroid1}
	 \end{figure}
	 
	 Let $\hat{\gamma}:=\{f(e^{it},t)/t\ge0)\}$ be the trace of the tip point. By the preceding discussion it is a curve in the unit disk connecting the origin and $1$, spiralling around zero. The reason why $\hat{\gamma}$ is not a logarithmic spiral is simply because it does not hit the positive real radius periodically with $t$. This follows from the fact that the points of intersection of the image of $\phi([-1,1])$ with the spiral $S$, do not lie onto the straight line segment $[S_+(\pi),\phi(1)]$ and therefore $S_-$ does not intersect $\phi([-1,1])$ periodically, as we observe in figure $\ref{fig:spiralbr}$. In fact, there is a "delay" since $[0,\phi(1)]$ lies below $\phi([0,1])$. Asymptotically, however, since the imaginary axis is tangent to $\phi([0,1])$ at the origin, $\hat{\gamma}$ hits the positive radius every $2\pi$ time units. In other words, if $S(t)=\phi(1)e^{(i-1)t}$ parameterizes $S_-$, then $S(t)\in\phi([-1,1])$ for $t=\pi n+t_{n}$, with $t_n>0$ and $t_n \rightarrow0$.
	 
	 Finally, since $\hat{\gamma}$ is the conformal image of $S_-$ under $\phi^{-1}$, we do expect that it will be a distorted image of a spiral around the origin, at least locally. For this we will refer to this curve as \textit{spiroid}. The following proposition, formally verifies that intuition.
	 \begin{proposition}\label{spiralling}
	     If $\hat{\gamma}$ is the trace above, with parameterization $\gamma(t)=r(t)e^{i\Theta(t)}$, then it is spiralling around the origin, i.e, it winds around $0$, $r(t)$ decreases to zero and $\Theta(t)$ increases to infinity.
	 \end{proposition}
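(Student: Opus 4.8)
The plan is to work with the explicit functional equation rather than with the PDE directly. Setting $z=e^{it}$ in $(\ref{phiof})$ collapses it to $\phi(\gamma(t))=\phi(1)e^{(i-1)t}$, so $\gamma(t)=\phi^{-1}\!\big(\phi(1)e^{(i-1)t}\big)$; since $\phi(1)e^{(i-1)t}$ runs along $S_-\setminus\{\phi(1)\}$, which lies in the domain $\phi(\mathbb{D})$ on which $\phi^{-1}$ is holomorphic, $\gamma$ is smooth for $t>0$. First I would differentiate this relation and use the elementary identity $\dfrac{z\phi'(z)}{\phi(z)}=\dfrac{i(z-1)}{z-i}$ to reduce everything to the autonomous ODE
$$\frac{\dot\gamma}{\gamma}=(i-1)\,\frac{\phi(\gamma)}{\gamma\,\phi'(\gamma)}=(1+i)\,\frac{\gamma-i}{\gamma-1}\qquad(t>0),$$
the blow-up at $t=0$ merely recording that $\gamma$ leaves $1$ with infinite speed but the perpendicular tangent noted earlier.

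Writing $\gamma=re^{i\Theta}$ and separating the real and imaginary parts of this ODE (using $\dot\gamma/\gamma=\dot r/r+i\dot\Theta$), a one-line manipulation gives
$$\frac{\dot r}{r}=\frac{|\gamma|^2-1}{|\gamma-1|^2},\qquad \dot\Theta=\frac{|\gamma-1-i|^2-1}{|\gamma-1|^2}.$$
The first quantity is negative since $|\gamma(t)|<1$, so $r$ is strictly decreasing; moreover $r(t)\to0$ because $|\phi(\gamma(t))|=|\phi(1)|e^{-t}\to0$ while $\phi$ is univalent (Theorem $\ref{spiralliketheo}$) with $\phi(0)=0$, so $\phi^{-1}$ is continuous at the origin. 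For $\Theta$ I would take imaginary parts in $\log\phi(\gamma(t))=\log\phi(1)+(i-1)t$, using $\log\phi(z)=\log z+(i-1)\log(z-i)$ and $\mathrm{Im}[(i-1)w]=\mathrm{Re}\,w-\mathrm{Im}\,w$, to get the closed identity
$$\Theta(t)=t+\arg\!\big(\gamma(t)-i\big)-\log|\gamma(t)-i|+\tfrac12\log2+\tfrac{\pi}{4},$$
where the continuous branch of $\arg(\gamma(t)-i)$, started at $-\pi/4$, is unambiguous because $\gamma(t)-i$ stays in the open disc $\mathbb{D}-i$ for $t>0$, which avoids the origin. Since $\gamma(t)\to0$ the last three terms remain bounded, whence $\Theta(t)=t+O(1)\to+\infty$ and, in particular, $\hat\gamma$ winds around $0$ infinitely often.

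It remains to upgrade this to monotonicity, i.e. $\dot\Theta>0$, equivalently — by the formula above — to show that $\gamma(t)$ stays outside the closed disc $\bar B$ of radius $1$ centered at $1+i$ (note $\bar B\cap\overline{\mathbb{D}}$ is a lens in the closed first quadrant, tangent to the two coordinate axes exactly at $1$ and at $i$). This is the step I expect to be the main obstacle, and the plan is an invariant-region argument for the flow $\dot\gamma=(1+i)\gamma\frac{\gamma-i}{\gamma-1}$ on $\overline{\mathbb{D}}$. First, a second-order expansion at $t=0$ gives $|\gamma(t)-1-i|^2=1+\tfrac83t+o(t)>1$: the curve and $\partial B$ are both tangent to $\mathbb{R}$ at $1$, but $\gamma$ has the smaller curvature there, so it leaves $\bar B$ immediately. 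Second, one checks that on the boundary arc $\partial B\cap\mathbb{D}$ — which contains neither corner, since $1$ and $i$ both lie on $\partial\mathbb{D}$ — the field points strictly outward, i.e. $\tfrac{d}{dt}|\gamma-1-i|^2>0$ whenever $|\gamma-1-i|=1$ with $\gamma\in\mathbb{D}$; this positivity is a one-variable estimate for a rational function of $e^{i\theta}$ on an arc, and is the only genuinely computational point. Granting it, $\mathbb{C}\setminus\bar B$ is forward invariant (if $\gamma$ re-entered $\bar B$ it would first touch $\partial B\cap\mathbb{D}$ with $\tfrac{d}{dt}|\gamma-1-i|^2\le0$, a contradiction), so $\gamma(t)\notin\bar B$ and hence $\dot\Theta>0$ for all $t>0$. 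Together with the previous paragraph this shows $\hat\gamma$ winds around the origin with $r(t)$ strictly decreasing to $0$ and $\Theta(t)$ strictly increasing to $+\infty$, as asserted.
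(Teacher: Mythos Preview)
Your argument is correct and takes a genuinely different route from the paper's for the monotonicity of $\Theta$. Both proofs derive the same ODE system and read off $r\searrow0$, $\Theta\to+\infty$ from $\phi(\gamma(t))=e^{(i-1)t}\phi(1)$; the divergence is in proving $\Theta'>0$. The paper rewrites $\Theta'>0$ as $e^{2t-\Theta}>\cos\Theta$ via the implicit relation $r^2+1-2r\sin\Theta=2re^{2t-\Theta}$, and then proves the stronger inequality $2t>\Theta(t)$ by tracking the hitting times $(a_n)$ of $\hat\gamma$ with the real diameter, using the earlier geometric analysis of how $S_-$ meets $\phi([-1,1])$ to get $\Theta(a_n)=n\pi$ while $a_n>n\pi$, and exploiting the sign of $\sin\Theta$ on each half-disc interval. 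You instead recognise $\Theta'>0$ as the geometric condition $\gamma\notin\bar B(1+i,1)$ and run a barrier argument: the local expansion $\gamma(t)=1-2\sqrt t+(2+\tfrac{2i}{3})t+O(t^{3/2})$ indeed gives $|\gamma-1-i|^2=1+\tfrac83 t+o(t)$, so $\gamma$ escapes $\bar B$ immediately; and the ``outward-pointing'' claim you left as the computational point does hold---parametrising $\partial B\cap\mathbb D$ as $\gamma=1+i+e^{i\theta}$ with $\theta\in(\pi,\tfrac{3\pi}{2})$, half-angle identities give $(1+i)\tfrac{1+e^{i\theta}}{i+e^{i\theta}}=\sqrt2\,\cos(\theta/2)/\cos(\theta/2-\pi/4)$, which is real and negative on that arc, while $\mathrm{Re}[e^{-i\theta}\gamma]=1+\cos\theta+\sin\theta<0$ there (this is exactly the condition $|\gamma|<1$), so $\tfrac{d}{dt}|\gamma-1-i|^2=2\,\mathrm{Re}[e^{-i\theta}\dot\gamma]>0$. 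Your approach is more self-contained---it does not feed on the preliminary picture of $\phi([-1,1])$ versus $S$---and is a clean invariant-region argument; the paper's approach, in exchange, makes that prior geometric analysis do real work and keeps the proof tied to the broader spiral picture of the section.
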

	 \begin{proof}
	    By $(\ref{phiof})$ and $(\ref{spirallike1})$, since $\gamma(t)=f(e^{it},t)=\phi^{-1}(e^{(i-1)t}\phi(1))$, differentiating with respect to time we get that for all $t\ge0$,
	    \begin{equation}\label{PDEat1}
	        \gamma'(t)=(r'(t)+r(t)i\Theta'(t))e^{i\Theta(t)}=(i-1)\dfrac{e^{(i-1)t}\phi(1)}{\phi'(\gamma(t))}=\gamma(t)\left(i-\dfrac{1+\gamma(t)}{1-\gamma(t)}\right).
	    \end{equation}
	    Taking the real and imaginary part in the last equation, we deduce the system of the radial and angular parts of Loewner's ODE respectively:
	    \begin{equation}\label{radialPDE}
	    r'(t)=-r(t)\dfrac{1-r^2(t)}{1-2r(t)\cos(\Theta(t))+r^2(t)},\quad r(0)=1
	    \end{equation}
	    and
	    \begin{equation}\label{angularPDE}
	        \Theta'(t)=1-\dfrac{2r(t)\sin(\Theta(t))}
	        {1-2r(t)\cos(\Theta(t))+r^2(t)},\quad \Theta(0)=0.
	    \end{equation}
	    The fact that $r(t)$ is decreasing, follows directly from $(\ref{radialPDE})$. Moreover, in view of equation $(\ref{phiof})$, comparing radial and angular parts we deduce the following equations:
	    $$\dfrac{r(t)}{\sqrt{1-2r(t)\sin(\Theta(t))+r^2(t)}}\exp(-\text{Arg}(r(t)e^{i\Theta(t))}-i))=|\phi(1)|e^{-t}$$
	    and for some $\mu_t\in\mathbb{Z}$, we have that
	    $$\Theta(t)+\dfrac{1}{2}\log(1-2r(t)\sin(\Theta(t))+r^2(t))-\text{Arg}(r(t)e^{i\Theta(t))}-i)=t+\text{Arg}(\phi(1))+2\mu_t\pi.$$
	    Notice that $\mu_t$ is an integer-valued, continuous function of $t$, but for $t=0$, the preceding relation gives that $\mu_0=0$, hence $\mu_t\equiv0$. Moreover, from this equation we deduce that $\Theta(t)\rightarrow\infty$, as $t$ tends to infinity, since $r(t)\rightarrow0$. Combining them together we have the implicit expression for $r$ and $\Theta$
	    $$r^2(t)+1-2r(t)\sin(\Theta(t))=2r(t)e^{2t-\Theta(t)}.$$
	    Now, using $(\ref{angularPDE})$, we want to prove that $\Theta'(t)>0$, for all $t>0$. But by the preceding relation, this means that it suffices to prove that $e^{2t-\Theta(t)}>\cos(\Theta(t))$, and again, it suffices to prove that $e^{2t-\Theta(t)}>1$.
     
	    \begin{figure}[ht]
	 	\centering
	 	\includegraphics[width=0.4\linewidth]{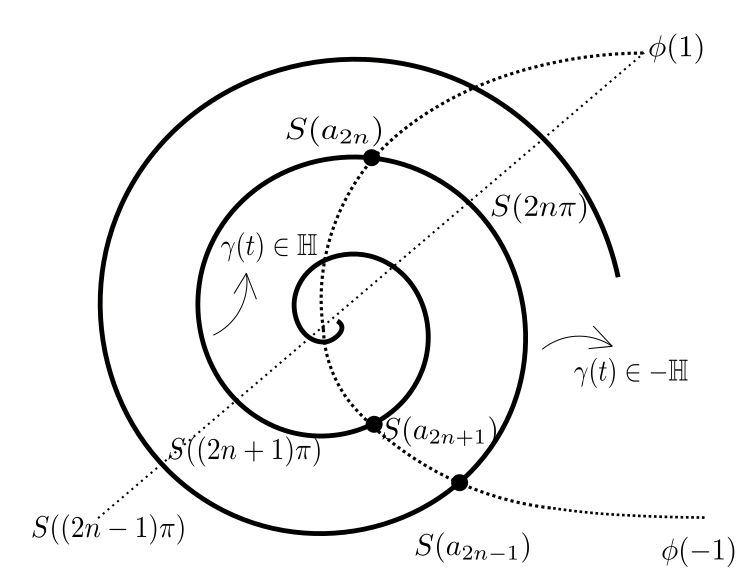}
	 	\caption{Hitting times of $\hat{\gamma}$ with the real diameter. The parts of the spiral between $S(a_{2n-1})$ and $S(a_{2n})$ correspond to the parts of $\hat{\gamma}$ lying in the lower half disc.}
	 	\label{fig:hitingtimes}
	 \end{figure}
  
	    To see this, let the sequence $(a_n)_{n\ge0}$, denote the times when $\hat{\gamma}$ intersects the real diameter. Equivalently, these are the times when $S(t)$ intersects $\phi([-1,1])$, so $(a_n)_{n\ge0}$ is increasing, and we have that $S(a_{2n})\in\phi([0,1])$ and $S(a_{2n+1})\in\phi([-1,0])$.  By the observations we made earlier, we first note that $S([a_{2n},a_{2n+1}])\subset\phi(\mathbb{D}\cap\mathbb{H})$, $S([a_{2n-1},a_{2n}])\subset\phi(-\mathbb{D}\cap\mathbb{H})$ and that
	    $$2n\pi<a_{2n}<(2n+1)\pi<a_{2n+1}$$
     for all $n\ge0$. We also have that $\Theta(a_n)=n\pi$. We see this inductively as following. For $0<t<a_1$, $\gamma(t)$ lies in the upper half disc and hits the interval $[-1,0]$ for the first time when $t=a_1$, since $S([0,a_1])\subset\phi(\mathbb{D}\cap\mathbb{H})$ . Thus $\Theta(a_1)=\pi$. For $a_1<t<a_2$, $\gamma(t)$ lies in the lower half disc and hits the interval $[0,1]$ for the second time when $t=a_2$, since $S([a_{1},a_{2}])\subset\phi(-\mathbb{D}\cap\mathbb{H})$ and thus $\Theta(a_2)=2\pi$. Arguing by induction, therefore, since $\gamma(t)$ lies in one the two half discs in the interval $[a_{n-1},a_n]$ and hence it does not wind about the origin, and since any two consecutive intersections of $\hat{\gamma}$ with $[-1,1]$, are either from the positive radius to the negative radius or vice versa, we then have that $\Theta(a_n)=\pi+\Theta(a_{n-1})=n\pi$.

	    Now, assume that $\gamma(t)$ lies in the upper half disc, thus $a_{2n}<t<a_{2n+1}$, for some $n\ge0$ and $\sin(\Theta(t))>0$. Therefore, by $(\ref{angularPDE})$, $\Theta(t)-t$ is decreasing and so we take that $2t-\Theta(t)>t+a_{2n}-2n\pi>0$. Similarly, in the case where $\gamma(t)$ lies in the lower half disc, then $\Theta(t)-t$ is increasing in an interval of the form $(a_{2n-1},a_{2n})$ and so we get that $2t-\Theta(t)>t+a_{2n}-2n\pi>0$.
	 \end{proof}
	 As a final step to our geometric study, it remains to find the angle in which $\hat{\gamma}$ emanates from the unit circle. By the work of \cite{slei1} and \cite{won}, we directly have that this angle is orthogonal, however, we will present an elementary proof suited for this case. Hence, we need to compute the limit
	 \begin{equation}\label{hitangle}
	     \lim_{t\rightarrow0}\text{Arg}(1-\gamma(t))\quad\text{or}\quad\lim_{t\rightarrow0}\text{Arg}(\gamma'(t)).
	 \end{equation}
	 However, since $\gamma(t)=\phi^{-1}(S(t))$ and $(\phi^{-1})'(\phi(1))=\infty$, the preceding limits require some attention. We have the following proposition.
	 
	 \begin{proposition}\label{orthogonality}
	     If $\hat{\gamma}$ is the trace above, then it hits the unit circle with angle $\frac{\pi}{2}$.
	 \end{proposition}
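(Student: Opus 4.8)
The plan is to extract the behaviour of $\gamma(t)$ near $t=0$ directly from the ODE $(\ref{PDEat1})$, bypassing $\phi^{-1}$, whose derivative is infinite at $\phi(1)$ (indeed $\phi'(z)=i(z-1)(z-i)^{i-2}$ vanishes at $z=1$). By the boundary behaviour discussed above — the common endpoint $\phi(1)$ of $S_-$ and $S_+$ corresponds to the point $z=1$ — we have $\gamma(t)=\phi^{-1}(S(t))\to1$ as $t\to0^+$, since $S(t)=e^{(i-1)t}\phi(1)\to\phi(1)$. I would then set $u(t):=1-\gamma(t)$, so that $u(t)\to0$ and, crucially, $\operatorname{Re} u(t)=1-\operatorname{Re}\gamma(t)\ge 1-|\gamma(t)|>0$ for every $t>0$ because $\gamma(t)\in\mathbb{D}$; this positivity is exactly what will pin down the square-root branch at the end.

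Next I would rewrite $(\ref{PDEat1})$ as
$$\gamma'(t)=\gamma(t)\left(i-\dfrac{1+\gamma(t)}{1-\gamma(t)}\right)=-\dfrac{2}{1-\gamma(t)}\bigl(1+o(1)\bigr),\qquad t\to0^+,$$
the bounded term $i\gamma(t)$ being negligible against $\gamma(t)\tfrac{1+\gamma(t)}{1-\gamma(t)}=\tfrac{2}{1-\gamma(t)}(1+o(1))$, which blows up. Hence $u'(t)=-\gamma'(t)=\tfrac{2}{u(t)}(1+o(1))$, so that
$$\dfrac{d}{dt}\,u(t)^2=2\,u(t)u'(t)=4\bigl(1+o(1)\bigr)\longrightarrow 4\quad(t\to0^+).$$
Since $u(t)^2\to0$, l'Hospital's rule gives $u(t)^2/t\to4$, i.e. $u(t)^2=4t\,(1+o(1))$; taking square roots and using $\operatorname{Re}u(t)>0$ to select the branch yields $u(t)=2\sqrt t\,(1+o(1))$ and $\gamma'(t)=-\tfrac{1}{\sqrt t}\,(1+o(1))$. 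Therefore the two limits in $(\ref{hitangle})$ are
$$\lim_{t\to0^+}\operatorname{Arg}\bigl(1-\gamma(t)\bigr)=0\qquad\text{and}\qquad\lim_{t\to0^+}\operatorname{Arg}\bigl(\gamma'(t)\bigr)=\pi .$$

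It then remains only to read off the geometry: $\hat\gamma$ reaches the boundary point $1$ along the direction $-1$, which is the inward radial direction at $1\in\partial\mathbb{D}$, and since the radius through $1$ is orthogonal to $\partial\mathbb{D}$, the curve meets the unit circle at angle $\tfrac{\pi}{2}$. The one genuinely delicate point is the passage from $\frac{d}{dt}u^2\to4$ to $u\sim2\sqrt t$ \emph{with the correct argument}; both the limit transfer (via $u(0^+)=0$) and the branch choice (via $\operatorname{Re}u(t)>0$) are forced by $\gamma(t)\in\mathbb{D}$. As an independent check one can instead expand $\phi$ at its boundary critical point: $\phi(z)-\phi(1)\sim\tfrac12\phi''(1)(z-1)^2$ with $\phi''(1)=i(1-i)^{i-2}$, and substituting $\phi(z)-\phi(1)=S(t)-\phi(1)\sim\phi(1)(i-1)t$ gives $(\gamma(t)-1)^2\sim\frac{2\phi(1)(i-1)}{\phi''(1)}\,t=4t$, using $\phi(1)/(1-i)^{i-2}=1-i$ and $(i-1)(1-i)=2i$ — the same conclusion.
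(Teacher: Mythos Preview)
Your proof is correct and takes a genuinely different route from the paper's. The paper decomposes $\gamma(t)=u(t)+i\upsilon(t)$ into real and imaginary parts, takes real and imaginary parts of the relation $\lim_{t\to0}(1-\gamma(t))\gamma'(t)=-2$ to obtain a two-equation system, and then argues via inequalities (showing first that $0<\frac{\upsilon(t)}{1-u(t)}<1$) and a $\limsup$ argument to conclude that $\frac{\upsilon(t)}{1-u(t)}\to0$. You instead keep $1-\gamma(t)$ complex, observe that $\tfrac{d}{dt}(1-\gamma(t))^2\to4$, integrate to get $(1-\gamma(t))^2\sim4t$, and use $\operatorname{Re}(1-\gamma(t))>0$ (forced by $\gamma(t)\in\mathbb{D}$) to select the square-root branch. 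Your approach is shorter and actually yields the sharper asymptotic $1-\gamma(t)\sim2\sqrt t$, whereas the paper's argument gives only the limiting argument; on the other hand, the paper's real-variable treatment sidesteps the branch-selection issue entirely. Your independent check via the Taylor expansion $\phi(z)-\phi(1)\sim\tfrac12\phi''(1)(z-1)^2$ at the boundary critical point is a nice confirmation and does not appear in the paper.
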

	 
	 \begin{proof}
	     Let $\gamma(t)=:u(t)+i\upsilon(t)$ written in terms of the real and imaginary part. By the preceding proposition for the radial and angular part of $\gamma(t)$, we deduce that near zero, $u'(t)<0$ and $\upsilon'(t)\ge0$.
	     
	     Differentiating, we have that $\phi'(\gamma(t))\gamma'(t)=S'(t)$ and because of $(\ref{spirallike1})$ we get that 
	     $$(1-\gamma(t))\gamma'(t)=(1+i)\dfrac{\gamma(t)\phi(1)}{\phi(\gamma(t))}e^{(i-1)t}(i-\gamma(t)).$$
	    By taking the limit as $t$ tends to zero we deduce the system
	     \begin{eqnarray}
	         \lim_{t\rightarrow0}\left((1-u(t))u'(t)+\upsilon(t)\upsilon'(t)\right)=-2\label{lim1} \\
	         \lim_{t\rightarrow0}\left((1-u(t))\upsilon'(t)-u'(t)\upsilon(t)\right)=0\label{lim2}
	     \end{eqnarray}
	     Because $u,\upsilon\in C([0,\infty))$ and $u',\upsilon'\in C((0,\infty))$, by $(\ref{lim1})$ we choose $\delta>0$, such that for all $t\in(0,\delta)$,
	     $$|(1-u(t))u'(t)+\upsilon(t)\upsilon'(t)+2|<1$$
	     which implies that
	     $$\left(u(t)-\dfrac{u^2(t)}{2}+\dfrac{\upsilon^2(t)}{2}\right)'<0,\quad \forall t\in(0,\delta).$$
	     But then, since $u(0)=1$ and $\upsilon(0)=0$, we have that $\upsilon^2(t)-(1-u(t))^2 <0$, thus
	     $$0<\dfrac{\upsilon(t)}{1-u(t)}<1$$
	     for all $t\in(0,\delta)$. As a result, $L:=\limsup_{t\rightarrow0}\frac{\upsilon(t)}{1-u(t)}$ exists and we have that $0\le L\le1$. 
	     Again from $(\ref{lim1})$, and because $\upsilon'\ge0$, we get that
	     $$(1-u(t))u'(t)<-1-\upsilon(t)\upsilon'(t)\le-1$$
	     for all $t\in(0,\delta)$. Now, using the preceding bound, the limit in $(\ref{lim2})$ becomes
	     $$\lim_{t\rightarrow0}\left(\dfrac{\upsilon'(t)}{u'(t)}-\dfrac{\upsilon(t)}{1-u(t)}\right)=0.$$
	     Turning to $\limsup$, since $u'<0$ we deduce that $L=\limsup_{t\rightarrow0}\frac{\upsilon'(t)}{u'(t)}\le0$. This shows that the limit $\lim_{t\rightarrow0}\frac{\upsilon(t)}{1-u(t)}$ exists and is equal to zero. As a result, we have that $\text{Arg}(1-\gamma(t))\rightarrow0$ as $t\rightarrow0$ and the result follows.
	 \end{proof}

	 \subsection{A counterexample on convergence of Loewner flows} Of course, the preceding flow could be considered as a special case of the flow $f_a (z,t)$ corresponding to the driving function $\zeta_a (t)=e^{iat}$, for $a=1$. We summarize the above discussion in the following proposition.
	 \begin{proposition}
	Let $a\in\mathbb{R}$ be arbitrary. Then, the Loewner flow $f_a (z,t)$ driven by $\zeta_a (t)=e^{iat}$ is given by the formula
	\begin{equation}
	    f_a (z,t)=\phi_a ^{-1}(e^{(ia-1)t}\phi_a (e^{-iat}z) \label{f_a}
	\end{equation}
	for all $|z|<1$ and $t\ge0$, where $\phi_a \in H(\mathbb{D})$ is  an $(\mathrm{Arccot}(a)-\frac{\pi}{2})$-spirallike function in $\mathbb{D}$ and
	\begin{equation}
	\phi_a (z)=z(z+b)^{-1-b} \label{phi_a}
	\end{equation}
	with $b=b(a)=\frac{1-ia}{1+ia}$. 
	
The trace $\hat{\gamma}_a :=\{f_a (e^{iat},t)/\ t\ge0 \}$ is a smooth curve lying in $\mathbb{D}$ that starts perpendicularly from $1$ and spirals about the origin. Also, $\hat{\gamma}_{-a}$ is a reflection of $\hat{\gamma_a}$ with respect to the real line.
	 \end{proposition}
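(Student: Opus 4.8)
The plan is to follow the treatment of the case $a=1$ in the previous subsection almost verbatim, the only formal changes being to replace $-i$ by $b$ and $1+i$ by $1+ia$; I therefore concentrate on what genuinely requires something new, namely the value of the spirallike angle, the geometry of $\hat\gamma_a$, and the reflection symmetry. (The geometric assertions implicitly assume $a\neq0$; when $a=0$ the flow degenerates to the classical radial slit.) As there, the PDE reduces to the Loewner ODE $\frac{dw}{dt}=w\,\frac{e^{iat}+w}{e^{iat}-w}$ with $w(z,0)=z$; the substitution $v=e^{-iat}w$ turns it into the separable equation $\frac{dv}{dt}=(1+ia)\,v\,\frac{v+b}{1-v}$, and a partial-fraction integration, the residues being $1/b$ at $v=0$ and $-(1+b)/b$ at $v=-b$, gives $\log\phi_a\!\big(e^{-iat}w(z,t)\big)=\log\phi_a(z)+(1-ia)t$ with $\phi_a$ as in \eqref{phi_a}. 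Passing to $f_a=w^{-1}$ converts this into the functional equation $\phi_a(f_a(z,t))=e^{(ia-1)t}\phi_a(e^{-iat}z)$, which is \eqref{f_a} once $\phi_a$ is known to be univalent.

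Univalence and the spirallike angle come from Theorem \ref{spiralliketheo}. A direct computation gives $\dfrac{z\phi_a'(z)}{\phi_a(z)}=b\,\dfrac{1-z}{z+b}$. Since $b=e^{-2i\arctan a}$ is unimodular and $\mathrm{Arccot}(a)-\tfrac\pi2=-\arctan a=:\psi$, one has $e^{-i\psi}b=e^{-i\arctan a}=\overline{1+ia}\,/\,|1+ia|$, so that $e^{-i\psi}\dfrac{z\phi_a'(z)}{\phi_a(z)}$ is a positive multiple of $(1-ia)\dfrac{1-z}{z+b}$. This last Möbius map has its zero at $1\in\partial\mathbb D$ and its pole at $-b\in\partial\mathbb D$, hence sends $\partial\mathbb D$ onto a line through the origin; evaluating at $z=-1$ identifies that line with the imaginary axis, and the value $\sqrt{1+a^2}\,e^{i\arctan a}$ taken at $z=0$ has positive real part, so $\mathbb D$ is mapped into $\{\mathrm{Re}>0\}$. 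Thus $\mathrm{Re}\big(e^{-i\psi}\tfrac{z\phi_a'(z)}{\phi_a(z)}\big)>0$ throughout $\mathbb D$; by Theorem \ref{spiralliketheo}, $\phi_a$ is $\psi$-spirallike, in particular univalent, and \eqref{f_a} is justified.

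For the trace, putting $z=e^{iat}$ in \eqref{f_a} gives $\gamma_a(t):=f_a(e^{iat},t)=\phi_a^{-1}\!\big(e^{(ia-1)t}\phi_a(1)\big)$. The factorization $e^{i\theta}+b=2\cos\!\big(\tfrac\theta2+\arctan a\big)e^{i(\theta/2-\arctan a)}$ shows, exactly as for $a=1$, that $\log\phi_a(e^{i\theta})-\log\phi_a(1)\in(1-ia)\mathbb R$, so $\phi_a(\partial\mathbb D)$ is the half $S_+$ of the logarithmic spiral through $\phi_a(1)$ terminating at $\infty$, whence $\phi_a(\mathbb D)=\hat{\mathbb C}\setminus S_+$; the complementary half $S_-$ terminates at $0$, and since $e^{(ia-1)t}=e^{-(1-ia)t}$ runs along $S_-$, the arc $S_-$ is parametrized by $t\mapsto e^{(ia-1)t}\phi_a(1)$, $t\ge0$. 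Consequently $f_a(\cdot,t)$ maps $\mathbb D$ onto $\mathbb D$ minus the $\phi_a^{-1}$-image of the sub-arc of $S_-$ joining $\phi_a(1)$ to $e^{(ia-1)t}\phi_a(1)$, and $\hat\gamma_a=\phi_a^{-1}(S_-)$. Because $S_-$ accumulates at $0$ while winding about it infinitely often (for $a\neq0$) and $\phi_a$ is conformal at $0$ with $\phi_a'(0)\neq0$, it follows that $\hat\gamma_a$ is real-analytic on $(0,\infty)$, converges to $0$, and likewise winds about the origin infinitely often; the monotonicity of $|\gamma_a(t)|$ and of $\arg\gamma_a(t)$ is obtained as in Proposition \ref{spiralling}, differentiating the functional equation to get $\gamma_a'(t)=-(1+ia)\,\gamma_a(t)\,\frac{\gamma_a(t)+b}{1-\gamma_a(t)}$ and splitting into radial and angular parts.

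Finally, the perpendicular entry at $1$ reproduces Proposition \ref{orthogonality} word for word: from the same formula for $\gamma_a'$ one gets $\lim_{t\to0}(1-\gamma_a(t))\gamma_a'(t)=-(1+ia)(1+b)=-2$, precisely the constant used there, so the same inequalities force $\mathrm{Arg}(1-\gamma_a(t))\to0$; alternatively one may invoke \cite{slei1} and \cite{won}. For the reflection, $b(-a)=\overline{b(a)}$ gives $\overline{\phi_a(\bar z)}=\phi_{-a}(z)$ and hence $\overline{\phi_a^{-1}(\bar\zeta)}=\phi_{-a}^{-1}(\zeta)$; substituting into \eqref{f_a} yields $\overline{f_a(\bar z,t)}=f_{-a}(z,t)$, so $\hat\gamma_{-a}=\{\overline{f_a(e^{iat},t)}:t\ge0\}=\overline{\hat\gamma_a}$. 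The two delicate points are keeping the branch of $(z+b)^{-1-b}$ consistent under conjugation — harmless because $\mathbb D+b$ lies in a half-plane bounded by a line through the origin — and, what I expect to be the main obstacle, arguing carefully that $\phi_a^{-1}$ transmits rather than unwinds the infinite winding of $S_-$ about $0$; this is exactly where the local conformality $\phi_a'(0)\neq0$ is essential.
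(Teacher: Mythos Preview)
Your proof is correct and follows essentially the same route as the paper's: reduce to the separable ODE via $v=e^{-iat}w$, identify $\phi_a$, verify the spirallike condition through the M\"obius map $z\mapsto b\frac{1-z}{z+b}$, and then read off the trace geometry, perpendicularity and reflection symmetry by invoking the analysis of \S3.1. If anything you are more explicit than the paper in a few places --- computing $\lim_{t\to0}(1-\gamma_a(t))\gamma_a'(t)=-(1+ia)(1+b)=-2$ and noting that conformality of $\phi_a$ at $0$ transmits the winding of $S_-$ --- while the paper simply refers back to section~3.1 and says ``the result follows''.
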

	 
	 \begin{proof}
	     Consider the function $\phi_a $ as in $(\ref{phi_a})$. Then, the Möbius transform
	     $$\dfrac{z\phi'_a(z) }{\phi_a (z)}=b\dfrac{1-z}{b+z}$$
	    maps the unit disk onto the half plane determined by the line crossing the real axis at 0, with angle $\mathrm{Arccot}(a)-\frac{\pi}{2}$, containing 1. Therefore, we have that
	    $$Re\left(e^{i(\frac{\pi}{2}-\mathrm{Arccot}(a))}\dfrac{z\phi_a '(z)}{\phi_a (z)}\right)>0$$
	    which implies that $\phi_a $ is  an $(\mathrm{Arccot}(a)-\frac{\pi}{2})$-spirallike function of $\mathbb{D}$.
	    In particular $\phi_a $ is univalent, so we define $f_a (z,t)$ by $(\ref{f_a})$. A straightforward differentiation shows that $f_a (z,t)$ solves Loewner's PDE for the driving function $\zeta_a (t)$.
	    
	    Now, taking into account the analysis in section 3.1, the trace $\hat{\gamma}_a $ of the flow is the inverse image of the logarithmic spiral $S_a :w=\phi_a (1)\exp(e^{i(\mathrm{Arccot}(a)-\frac{\pi}{2})}t)$, $-\infty\le t\le0$, under $\phi_a $ and the result follows.
	    
	    Finally, notice that $b(-a)=\overline{b(a)}$. Because the principal branch of $\text{Arg}$ ranges in $[-\pi,\pi)$, thus $\text{Arg}(\bar{w})=-\text{Arg}(w)$, we have the elementary $\overline{(w^a )}=\bar{w}^{\bar{a}}$ and as a result $\overline{\phi_a (z)}=\phi_{-a}(\bar{z})$. Therefore, by conjugating the functional equation $\phi_a (f_a (z,t))=e^{(ia-1)t}\phi_a (e^{-iat}z)$ we take that 
	 $$f_{-a}(z,t)=\overline{f_a (\bar{z},t)}$$
	 for all $a\in\mathbb{R}$.
	 \end{proof}

	 To conclude, we study the convergence of $(f_a )_{a\in\mathbb{R}}$ as $a$ tends to infinity. In general, given a pointwise convergent sequence of driving functions $(p_n )_{n\ge1}$ in $\mathbb{D}\times[0,\infty)$, then the sequence of the corresponding Loewner flows, converges to the Loewner flow corresponding to the limiting driving function. The following counterexample shows that the converse is not necessarily true.
	 
	 \begin{proposition}\label{convergence}
	     The family of Loewner flows $(f_a )_{a\in\mathbb{R}}$ converges to the Loewner flow $h(z,t)=ze^{-t}$. Thus, $f_a (\cdot,t)\rightarrow h(\cdot,t)$ locally uniformly in $\mathbb{D}$, as $a\rightarrow\infty$, for all $t\ge0$.
	 \end{proposition}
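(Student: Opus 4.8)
The plan is to read off the convergence directly from the explicit representation \eqref{f_a}, after understanding the limiting behaviour of the spirallike functions $\phi_a$ and of their inverses. The key algebraic observation is that $-1-b(a)=\dfrac{-2}{1+ia}$, so that $b(a)\to-1$ and, more importantly, $\bigl|-1-b(a)\bigr|=\dfrac{2}{\sqrt{1+a^{2}}}\to 0$ as $a\to\infty$. Using this, I would first show that $\phi_a\to\mathrm{id}_{\mathbb D}$ locally uniformly on $\mathbb D$: writing $z+b=b(1+z/b)$ in \eqref{phi_a} and taking the branch of $(z+b)^{-1-b}$ that is analytic on $\mathbb D$ and normalized at the origin, one has
$$\phi_a(z)=b^{-1-b}\,z\,(1+z/b)^{-1-b},\qquad b^{-1-b}=\exp\!\bigl((-1-b)\log b\bigr),$$
with principal logarithms (legitimate since $|b|=1$ and $|z/b|<1$ keep $1+z/b$ in the right half–plane). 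On a disc $\{|z|\le r\}$, $r<1$, the quantities $\log b$ and $\log(1+z/b)$ stay bounded uniformly in $a$ and $z$ while the exponent $-1-b\to 0$; hence $b^{-1-b}\to 1$ and $(1+z/b)^{-1-b}\to 1$ uniformly there, so $\phi_a(z)\to z$.

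Next I would pass to the inverses. Since each $\phi_a$ is univalent on $\mathbb D$ and $\phi_a\to\mathrm{id}$ locally uniformly with univalent, non-constant limit, the stability of univalence under locally uniform limits (Hurwitz' theorem together with the argument principle) yields: for every compact $K\subset\mathbb D$ there is $A(K)$ such that $K\subset\phi_a(\mathbb D)$ for $a\ge A(K)$, and $\phi_a^{-1}\to\mathrm{id}$ uniformly on $K$. I would then insert this into \eqref{f_a}. Fix $t\ge0$ and $r<1$; since $|e^{-iat}z|=|z|$, Step~1 applied with $w=e^{-iat}z$ gives $\phi_a(e^{-iat}z)=e^{-iat}z+\varepsilon_a(z)$ with $\sup_{|z|\le r}|\varepsilon_a(z)|\to 0$, whence, using $|e^{(ia-1)t}|=e^{-t}\le 1$,
$$e^{(ia-1)t}\phi_a(e^{-iat}z)=e^{-t}z+\widetilde\varepsilon_a(z),\qquad \sup_{|z|\le r}|\widetilde\varepsilon_a(z)|\to 0 .$$
The points $e^{-t}z$ lie in the fixed compact set $\{|w|\le e^{-t}r\}\subset\mathbb D$, on a neighbourhood of which, for $a$ large, $\phi_a^{-1}$ is defined, converges uniformly to $\mathrm{id}$, and is in particular equicontinuous; therefore $f_a(z,t)=\phi_a^{-1}\!\bigl(e^{-t}z+\widetilde\varepsilon_a(z)\bigr)\to e^{-t}z=h(z,t)$ uniformly on $\{|z|\le r\}$, which is the assertion.

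I do not expect a genuine obstacle here. Once the identity $-1-b(a)=-2/(1+ia)\to 0$ is noticed, the convergence $\phi_a\to\mathrm{id}$ is a one-line estimate and the final step is bookkeeping with uniform error terms, the point being simply that the two fast rotations $z\mapsto e^{-iat}z$ and multiplication by $e^{iat}$ cancel in the limit. The only ingredient that is not a direct computation is the convergence of the inverse maps $\phi_a^{-1}$ — that the images $\phi_a(\mathbb D)$ eventually contain any prescribed compact subset of $\mathbb D$ and that $\phi_a^{-1}\to\mathrm{id}$ uniformly there — which I would settle by the normal–family theory of univalent functions rather than by the explicit formulas.
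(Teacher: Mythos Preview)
Your argument is correct and rests on the same key observation as the paper's proof, namely that $-1-b(a)=-2/(1+ia)\to 0$ forces $\phi_a\to\mathrm{id}_{\mathbb D}$ locally uniformly. The execution differs, however. You pass to the inverses, invoking the standard Hurwitz/normal-family lemma that locally uniform convergence of univalent maps to a univalent limit entails locally uniform convergence of the inverses on compacta of the limiting image, and then feed the explicit formula \eqref{f_a} through this directly. The paper instead avoids any discussion of $\phi_a^{-1}$: it first establishes $\lim_a \phi_a(f_a(z,t))=e^{-t}z$ from \eqref{f_a}, then exploits normality of the family $(f_a(\cdot,t))_a$ (they are self-maps of $\mathbb D$), extracts a locally uniformly convergent subsequence $f_{a_n}\to h$, rules out a constant limit via $h'(0)=e^{-t}$, and identifies $h(z)=e^{-t}z$ by a triangle-inequality argument using $\phi_{a_n}\to\mathrm{id}$ on a disc containing $h(z_0)$. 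Your route is a bit more streamlined once the inverse-convergence lemma is granted; the paper's route trades that lemma for Montel's theorem applied to $(f_a)_a$ and a subsequence argument. Both are short and essentially equivalent in depth.
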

	 
	 \begin{proof}
	  As $a$ tends to infinity, we have that $b(a)\rightarrow-1$, hence $\phi_a (z)\rightarrow z$ for all $z\in\mathbb{D}$ and a direct computation also shows that $e^{iat}\phi_a (e^{-iat}z)\rightarrow z$. In fact, the convergence is uniform on the compact subsets of $\mathbb{D}$. To see this, let $|z|\le\rho<1$. Then, $|z+b|\ge\mu:=1-\rho=\text{dist}(b,\partial\mathbb{D}_{\rho})$ and therefore
	 $$|\phi_a (z)|=|z|\exp\left(-\dfrac{2}{a^2 +1}\log|z+b|-\dfrac{2a}{a^2+1}\text{Arg}(z+b)\right)\le\rho\exp\left(\dfrac{2a\pi-2\log\mu}{a^2 +1}\right)$$
	 which implies that $(\phi_a)_{a\in\mathbb{R}}$ is locally uniformly bounded and hence Montel's theorem applies. As a result, by $(\ref{f_a})$ we get that 
	 $$\lim_{a\rightarrow\infty}\phi_a (f_a (z,t))=e^{-t}z$$
	 for all $|z|<1$ and $t\ge0$. 
	 
	 Now, fix any $t\ge0$. Since $f_a (\mathbb{D},t)\subset\mathbb{D}$ for all $a\in\mathbb{R}$, then $(f_a (\cdot,t))_{a\in\mathbb{R}}$ is a normal family, so consider a sequence $(a_n )_{n\ge1}$, such that $f_n :=f_{a_n }(\cdot,t)\rightarrow h $, for some $h\in H(\mathbb{D})$. Note that $h$ will be either univalent or constant by Hurwitz theorem. However, $h'(0)=\lim f_a '(0)=e^{-t}$, hence $h$ is univalent and $|h|<1$. We will prove that $h(z)=ze^{-t}$.
	 
	 Pick an arbitrary $z_0 \in\mathbb{D}$ and choose $\delta>0$, $N\in\mathbb{N}$, so that
	 $$f_n (z_0)\in \overline{D(h(z_0 ),\delta)}\subset\mathbb{D}$$
	 for all $n\ge N$. Then, the locally uniform convergence of $(\phi_a )_a $ implies that
	 
	 $$\lim_{n\rightarrow\infty}(\phi_{a_n } (f_n (z_0))-f_n (z_0))=0$$ 
	 and finally
	 \begin{align*}
	 	|h (z_0)-z_0 e^{-t}|&\le|h(z_0 )-f_n (z_0)|+|\phi_{a_n } (f_n (z_0))-f_n (z_0)|\\&+|\phi_{a_n } (f_n (z_0))-z_0 e^{-t}|\rightarrow0	
	 \end{align*}
	 and the result follows.
	 
	 We, thus, proved that 
	 $$\lim_{a\rightarrow\infty}f_a (z,t)=ze^{-t}$$
	 locally uniformly in $\mathbb{D}$, for each $t\ge0$. 
	  \end{proof}
	 \begin{corollary}
	 There exists a sequence $(f_n )_{n\ge1}$ of Loewner flows that corresponds to some driving functions $(p_n )_{n\ge1}$ and a Loewner flow $f$, such that $f_n\rightarrow f$, but $(p_n )_{n\ge1} $ is not convergent.
	 \end{corollary}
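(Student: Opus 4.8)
The plan is to extract the corollary directly from Proposition \ref{convergence}. Take $a_n = n$ and let $f_n$ denote the Loewner flow $f_{a_n}$ of the preceding proposition, so that its driving function is
$$p_n(z,t) = \frac{e^{int}+z}{e^{int}-z}, \qquad (z,t)\in\mathbb{D}\times[0,\infty),$$
coming from the unit point mass at $e^{int}$. By Proposition \ref{convergence}, $f_n(\cdot,t)\to h(\cdot,t)$ locally uniformly in $\mathbb{D}$ for every $t\ge0$, where $h(z,t) = ze^{-t}$; and $h$ is itself a Loewner flow, namely the one driven by the constant function $p\equiv1$ (equivalently, by normalized arc-length measure on $\partial\mathbb{D}$), as one checks at once from $\partial_t h = -ze^{-t} = -zh'(z,t)\cdot1$ and $h(z,0)=z$. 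So, setting $f:=h$, we obtain a sequence of Loewner flows $f_n$ converging to the Loewner flow $f$.

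It then remains only to observe that $(p_n)_{n\ge1}$ does not converge. I would fix any $z_0\in\mathbb{D}\setminus\{0\}$ and any $t_0>0$ which is not an integer multiple of $2\pi$. Since $w\mapsto (w+z_0)/(w-z_0)$ is injective and continuous on $\partial\mathbb{D}$, the sequence $p_n(z_0,t_0) = (e^{int_0}+z_0)/(e^{int_0}-z_0)$ converges if and only if $(e^{int_0})_{n\ge1}$ does; but if $e^{int_0}\to L$, then passing to the limit in $e^{i(n+1)t_0} = e^{it_0}e^{int_0}$ yields $L = e^{it_0}L$ with $e^{it_0}\neq1$, forcing $L=0$, which contradicts $|L|=1$. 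Hence $(p_n(z_0,t_0))_n$ diverges, so $(p_n)_{n\ge1}$ is not a convergent sequence of driving functions. Taking $(f_n)$, $f$ and $(p_n)$ as above proves the corollary.

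There is essentially no obstacle here, since all the analytic content is already contained in Proposition \ref{convergence}. The only two points requiring a little care are certifying that the limit $h(z,t)=ze^{-t}$ is itself a genuine Loewner flow — so that the statement holds in exactly the asserted form — and exhibiting a concrete pair $(z_0,t_0)$ witnessing the divergence of $(p_n)$; both are immediate.
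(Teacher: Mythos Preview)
Your proof is correct and follows essentially the same approach as the paper: you choose $a_n=n$, invoke Proposition~\ref{convergence} for convergence of the flows to $h(z,t)=ze^{-t}$, identify $h$ as the Loewner flow driven by $p\equiv 1$, and then note that the driving functions $p_n(z,t)=\frac{e^{int}+z}{e^{int}-z}$ fail to converge. You provide a bit more detail than the paper in verifying the divergence of $(p_n)$, but the argument is the same.
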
	 
	 
	 \begin{proof}
	   We only have to observe that the function $f(z,t)=ze^{-t}$ is the Loewner flow driven by $p(z,t)=1$. The Loewner flows $f_a $ of the preceding proposition converge to $f$, as $a\rightarrow\infty$, but the driving functions $p_a(z,t)=\frac{e^{iat}+z}{e^{iat}-z}$ do not converge with respect to $a$.
	 \end{proof}

	\subsection{The general case} 
	 Let us, now, generalize to the multiple slit case. Let the points $\zeta_1 ,\dots,\zeta_n \in\partial\mathbb{D}$, the weights $b_1 ,\dots,b_n>0$ and the angle $a\in\mathbb{R}$ be arbitrarily chosen and consider  the Loewner ODE
	 \begin{equation}
	 	\dfrac{dw}{dt}(z,t)=w(z,t)\sum_{k=1}^{n}b_k \dfrac{e^{iat}\zeta_k +w(z,t)}{e^{iat}\zeta_k -w(z,t)}
	 \end{equation}
	 for all $|z|<1$ and $t\ge0$, with $w(z,0)=z$. Using the transformation $v=e^{-iat}w$ the equation becomes
	 \begin{equation}
	     \label{ODEu}
	\dfrac{dv}{dt}=v\left(-ia+\dfrac{\sum_{k=1}^{n}b_k (v+\zeta_k )\prod_{j\neq k}(\zeta_j -v)}{\prod_{k=1}^{n}(\zeta_k -v)}\right)=v\dfrac{Q(v)-iaR(v)}{R(v)}
  \end{equation}
	 with $Q(z)=\sum_{k=1}^{n}b_k(\zeta_k +z)\prod_{j\neq k}(\zeta_j -z)$ and $R(z)=\prod_{k=1}^{n}(\zeta_k -z)$. Defining $b:=\sum_{k=1}^{n}b_k$, we have that 
$$Q(z)-iaR(z)=(-1)^{n-1}(b+ia)z^n+\dots+(b-ia)\zeta_1 \dots \zeta_n =:(-1)^{n-1}(b+ia)P_n (z)$$
	 where we set $P_n (z)=\prod_{k=1}^{n}(z-\xi_k )$, for the complex roots $\xi_1 ,\dots,\xi_n $ of the $n$-degree polynomial $Q-iaR$. Now, since $\deg(zP_n (z))=\deg R(z)+1$, we consider coefficients $A, B_1 ,\dots,B_n $ so that 
	 \begin{equation}
	 	\dfrac{R(z)}{zP_n(z)}=\dfrac{A}{z}+\sum_{k=1}^{n}\dfrac{B_k }{z-\xi_k }.\label{coeff}
	 \end{equation}
	 Comparing the coefficients of the polynomials we find that 
	 $$A=\dfrac{R(0)}{P_n (0)}=(-1)^{n-1}\dfrac{(b+ia)R(0)}{Q(0)-iaR(0)}=(-1)^{n-1}\dfrac{1+i\hat{a}}{1-i\hat{a}}=:(-1)^{n-1}\xi_{\hat{a}},$$
	 where $\hat{a}:=a/b$.
	 At this point, let us write $\zeta_k =e^{i\theta_k }$ with $0\le\theta_1 <\dots<\theta_n <2\pi$ and assume for a moment that $\xi_k $ lie on the unit circle, so write $\xi_k =e^{i\rho_k}$ with the angles $\rho_k $ written in increasing order. We then deduce that if $\Theta:=\theta_1 +\dots+\theta_n $ and $P:=\rho_1 +\dots+\rho_n $, then $e^{i(\Theta-P)}=-\xi_{\hat{a}} $. This, of course, would imply that $\Theta-P=\text{Arg}(\xi_{\hat{a}})-\pi+2\mu_0\pi$, for some $\mu_0\in\mathbb{Z}$. Similarly to the prior comparison we take that
	 
	 \begin{equation}\label{a_k}
	  	B_k =\dfrac{R(\xi_k)}{\xi_k \prod_{j\neq k}(\xi_k -\xi_j )}=(-1)^{n-1}\dfrac{\zeta_k -\xi_k }{\xi_k }\prod_{j\neq k}\dfrac{\zeta_j -\xi_k }{\xi_j -\xi_k }
	  	\end{equation}
	  	
	 	\begin{eqnarray*}
	 	&=&2(-1)^{n-1}ie^{i\frac{\Theta-P}{2}}\sin(\dfrac{\theta_k -\rho_k }{2})\prod_{j\neq k}\dfrac{\sin(\frac{\theta_j -\rho_k }{2})}{\sin(\frac{\rho_j -\rho_k }{2})}\\
	 	&=:&2(-1)^{n-1}e^{i\frac{\text{Arg}\xi_{\hat{a}}}{2}}(-1)^{\mu_0}\tilde{\alpha}_k .
	 \end{eqnarray*}
	 In addition, by comparing the coefficients of $z^n$ and by setting $a_k:=(-1)^{\mu_0}\tilde{a}_k$, equation $(\ref{coeff})$ gives us that 
	 \begin{equation}\label{sumak}
	 	\sum_{k=1}^{n}\alpha_k =-\cos(\dfrac{\text{Arg}\xi_{\hat{a}} }{2}).
	 \end{equation}
	 The following lemma not only shows that the $\xi_k $'s lie on the unit circle, but gives us their relative positions in comparison to the $\zeta_k $'s as well.
  
	 \begin{lemma}\label{parameters}
	 	Given the parameters above, the following hold:\\
	 	1. The roots $\xi_1 ,\dots,\xi_n $ of $P_n$ are distinct points of $\partial\mathbb{D}\setminus\{\zeta_1 ,\dots,\zeta_n \}$.\\
	 	2. All coefficients $\alpha_k $ are negative, satisfying $(\ref{sumak})$.
	 \end{lemma}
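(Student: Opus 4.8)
The plan is to study the rational vector field $F(v) := v\,\frac{Q(v)-iaR(v)}{R(v)} = (-1)^{n-1}(b+ia)\,\frac{v\,P_n(v)}{R(v)}$ on the boundary circle and show that $\partial\mathbb D$ is invariant for the (real-time) flow of $\dd v/\dd t = F(v)$, from which the location of the zeros $\xi_k$ of $P_n$ must follow. First I would observe that for $v=e^{i\theta}\in\partial\mathbb D\setminus\{\zeta_1,\dots,\zeta_n\}$ the original summands satisfy $\mathrm{Re}\!\left(b_k\frac{\zeta_k+v}{\zeta_k-v}\right)=0$, since $\frac{\zeta_k+v}{\zeta_k-v}$ is purely imaginary on the circle; hence $\mathrm{Re}\!\left(\bar v\sum_k b_k\frac{\zeta_k+v}{\zeta_k-v}\right)=0$, i.e. the untransformed field $w\mapsto w\sum_k b_k\frac{\zeta_k+w}{\zeta_k-w}$ is tangent to $\partial\mathbb D$. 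Translating through $v=e^{-iat}w$ (a rotation, which preserves the circle), the transformed field $F$ is also tangent to $\partial\mathbb D$ away from the poles $\zeta_k$. Consequently $F$ cannot vanish at an interior point of $\mathbb D$ either: by the argument principle / Berkson--Porta, $-vp(v)$ with $\mathrm{Re}\,p>0$ has its only zero at $0$, and here the zeros of the field other than $v=0$ are exactly the $\xi_k$; so each $\xi_k$ lies on $\partial\mathbb D$. I would phrase this cleanly: $p(v):=\sum_k b_k\frac{\zeta_k+v}{\zeta_k-v}-ia$ satisfies $\mathrm{Re}\,p>0$ on $\mathbb D$ when... actually $\mathrm{Re}\,p = \sum b_k\,\mathrm{Re}\frac{\zeta_k+v}{\zeta_k-v}>0$, and $F(v)=vp(v)$; a bounded holomorphic $p$ with positive real part is nonvanishing on $\mathbb D$, so the zeros of $vP_n(v)$ inside $\overline{\mathbb D}$ other than $0$ lie on the circle. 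Since $\deg P_n=n$ and $P_n(0)=\pm\xi_{\hat a}\neq 0$, and $\mathrm{Re}\,p(0)=b>0$ shows $p$ is nonconstant, all $n$ roots $\xi_k$ are unimodular; that they differ from the $\zeta_k$ is immediate since $P_n(\zeta_k)\cdot 0$... rather, $Q(\zeta_k)-iaR(\zeta_k)=Q(\zeta_k)=b_k\cdot 2\zeta_k\prod_{j\neq k}(\zeta_j-\zeta_k)\neq 0$ while $R(\zeta_k)=0$, so $\zeta_k$ is a pole, not a zero, of $F$; and the $\xi_k$ are distinct because $Q-iaR$ has all coefficients with the top and bottom ones nonzero and, more to the point, a double root $\xi$ would force $p$ to have positive real part yet vanish to order two, contradicting the open mapping theorem applied to $p$ near a boundary zero — here I would instead argue via the explicit Blaschke-type structure below.

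For the second claim, that every $\alpha_k<0$, I would exploit the fact just established that $\xi_k,\zeta_k\in\partial\mathbb D$, writing $\zeta_k=e^{i\theta_k}$, $\xi_k=e^{i\rho_k}$ with $\theta_1<\dots<\theta_n$ and the $\rho_k$ in increasing order. The key combinatorial input is the \emph{interlacing} of the two sets of points on the circle: since $F=vp(v)$ and $p$ maps $\partial\mathbb D$ minus the $\zeta_k$ onto the imaginary axis with a pole of residue of fixed sign at each $\zeta_k$, the function $p$ restricted to the circle, as a map to $i\R\cup\{\infty\}$, is strictly monotone on each arc $(\theta_k,\theta_{k+1})$ and sweeps all of $i\R$; hence it has exactly one zero $\xi_k$ in each such arc (indices mod $n$), which is precisely the interlacing $\theta_k<\rho_k<\theta_{k+1}$ (after a suitable cyclic relabelling). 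Granting the interlacing, in the product formula $\tilde\alpha_k=\sin\!\big(\tfrac{\theta_k-\rho_k}{2}\big)\prod_{j\neq k}\frac{\sin((\theta_j-\rho_k)/2)}{\sin((\rho_j-\rho_k)/2)}$ one counts signs: for fixed $k$, as $j$ runs over the other indices the factor $\sin((\theta_j-\rho_k)/2)$ and the factor $\sin((\rho_j-\rho_k)/2)$ change sign at the "same places" up to one extra crossing coming from whether $\theta_j$ or $\rho_j$ is nearer to $\rho_k$, and the interlacing pins down that extra crossing so that the total sign of $\tilde\alpha_k$, together with the global factor $(-1)^{\mu_0}$ fixing the branch, comes out negative. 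I would make this rigorous by the standard device of evaluating the partial-fraction identity $(\ref{coeff})$ at a real point, or rather by noting that $\mathrm{Re}\,p(0)=b>0$ together with $\mathrm{Re}\,p>0$ throughout forces each residue $B_k$ of $p$ at $\zeta_k$ to have a definite sign; chasing this sign through $(\ref{a_k})$ gives $\mathrm{sgn}\,\alpha_k$ directly, and then $(\ref{sumak})$ confirms consistency since $\sum\alpha_k=-\cos(\mathrm{Arg}\,\xi_{\hat a}/2)<0$ as $\mathrm{Arg}\,\xi_{\hat a}=\mathrm{Arg}\frac{1+i\hat a}{1-i\hat a}\in(-\pi,\pi)$.

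A cleaner route for part 2, which I would actually adopt, is this: the residue $B_k$ of the \emph{meromorphic differential} $\frac{R(z)}{zP_n(z)}\dd z$ at the simple pole $\xi_k$ can be read off from $p$ itself. Indeed $\frac{1}{p(v)}=\frac{R(v)}{(-1)^{n-1}(b+ia)P_n(v)}$, so $\frac{R(v)}{vP_n(v)}=(-1)^{n-1}(b+ia)\frac{1}{v\,p(v)}$, and near $\xi_k$, $p(v)\approx p'(\xi_k)(v-\xi_k)$, giving $B_k=(-1)^{n-1}(b+ia)\,\frac{1}{\xi_k\,p'(\xi_k)}$. Now on $\partial\mathbb D$ the function $\theta\mapsto -i\,p(e^{i\theta})$ is real-valued (since $p\in i\R$ there away from poles) and its $\theta$-derivative is $-i\cdot i e^{i\theta}p'(e^{i\theta})=e^{i\theta}p'(e^{i\theta})=\xi_k p'(\xi_k)$ at $\theta=\rho_k$; because $p$ has positive real part inside and a pole of "$+\infty\cdot\text{(positive)}$" type at each $\zeta_k$ (the residue of $p$ at $\zeta_k$ is $-2b_k\zeta_k$, so $-i p$ runs monotonically), this real-valued function is strictly decreasing across each zero, so $\xi_k p'(\xi_k)<0$ is a negative real number; together with the explicit prefactor this yields $\mathrm{Arg}\,B_k$ and hence that $\alpha_k$, defined so as to strip the common unimodular factor, is real and negative. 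This is the step I expect to be the main obstacle: making the monotonicity/sign bookkeeping airtight — in particular verifying that $-i\,p(e^{i\theta})$ is genuinely strictly monotone on each complementary arc (so there is exactly one $\xi_k$ per arc, giving both the interlacing in part 1 and the sign in part 2), which amounts to checking $\frac{\dd}{\dd\theta}\mathrm{Im}\,p(e^{i\theta})<0$ — equivalently that $\mathrm{Re}(e^{i\theta}p'(e^{i\theta}))<0$ — a computation that reduces to positivity of $\sum_k b_k\frac{1}{|\zeta_k-e^{i\theta}|^2}$ type sums and should go through cleanly.
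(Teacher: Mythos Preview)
Your approach is essentially the paper's: the function you call $p(v)=\sum_k b_k\frac{\zeta_k+v}{\zeta_k-v}-ia$ is precisely $(1+i\hat a)\,g(v)$ in the paper's notation, and your boundary function $-ip(e^{i\theta})$ is twice the paper's $\tilde g(\theta)$. The paper likewise shows $g'(\xi_i)\neq 0$ for simplicity of the roots and proves strict monotonicity of $\tilde g$ on each arc $(\theta_k,\theta_{k+1})$ to obtain the interlacing; your computation $e^{i\theta}p'(e^{i\theta})=-\sum_k b_k/(2\sin^2\tfrac{\theta-\theta_k}{2})<0$ is exactly the paper's $\partial_\theta\tilde g<0$.

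There is one genuine gap in part 1. You only argue $\mathrm{Re}\,p>0$ on $\mathbb D$, which excludes interior zeros; the jump to ``all $n$ roots $\xi_k$ are unimodular'' does not follow, since nothing you wrote rules out $|\xi_k|>1$. The paper handles both sides at once by observing that each M\"obius summand sends $\mathbb D$ to one open half-plane and $\overline{\mathbb D}^{\,c}$ to the opposite one, so the convex combination $g$ avoids $0$ in both. In your language the missing line is the reflection identity: for $|v|>1$ set $w=1/\bar v\in\mathbb D$; then $\overline{\frac{\zeta_k+v}{\zeta_k-v}}=-\frac{\zeta_k+w}{\zeta_k-w}$, hence $\mathrm{Re}\,p(v)=-\sum_k b_k\,\mathrm{Re}\frac{\zeta_k+w}{\zeta_k-w}<0$, so $p$ has no zeros in the exterior either. (Your first stab at distinctness via the open mapping theorem is not a valid argument, but you correctly abandon it for the monotonicity route, which does the job.)

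Your ``cleaner route'' for part 2 is a genuine, and nicer, variant. The paper deduces $\alpha_k<0$ by reading signs term-by-term in the product $\tilde\alpha_k=\sin\frac{\theta_k-\rho_k}{2}\prod_{j\neq k}\frac{\sin((\theta_j-\rho_k)/2)}{\sin((\rho_j-\rho_k)/2)}$ from the interlacing, and then uses the constraint $\sum\alpha_k=-\cos(\mathrm{Arg}\,\xi_{\hat a}/2)<0$ to pin down the branch parity $(-1)^{\mu_0}$. Your residue identity $B_k=\frac{(-1)^{n-1}(b+ia)}{\xi_k\,p'(\xi_k)}$ together with $\xi_k p'(\xi_k)=\frac{d}{d\theta}\big(-ip(e^{i\theta})\big)\big|_{\theta=\rho_k}<0$ gives the sign of $\alpha_k$ in one stroke (since $e^{i\,\mathrm{Arg}\xi_{\hat a}/2}=(b+ia)/|b+ia|$, one gets $\alpha_k=|b+ia|/(2\,\xi_k p'(\xi_k))<0$), bypassing the case analysis on $\mu_0$. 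The paper in fact computes $g'(\xi_i)$ explicitly, so it has this ingredient in hand but chooses the combinatorial route instead; your way is shorter.
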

	 \begin{proof}
	 	1. We have that
	 	\begin{eqnarray*}
	 		Q(z)-iaR(z)&=&\sum_{k=1}^{n}b_k(\zeta_k +z)\prod_{j\neq k}^{n}(\zeta_j -z)-ia\prod_{k=1}^{n}(\zeta_k -z)\\
	 		&=&\prod_{k=1}^{n}(\zeta_k -z)(\sum_{k=1}^{n}b_k\dfrac{\zeta_k +z}{\zeta_k -z}-ia)\\
	 		&=&(1+i\hat{a})R(z)\sum_{k=1}^{n}b_k\dfrac{z+\zeta_k \bar{\xi}_{\hat{a}}}{\zeta_k -z}
	 	\end{eqnarray*}
	 	and notice that $Q(\zeta_k )-iaR(\zeta_k )=2b_k \zeta_k \neq0$. As a result, the points $\xi_k $ are zeros of the sum $g(z)=:\sum_{k=1}^{n}b_k\frac{z+\zeta_k \bar{\xi}_{\hat{a}}}{\zeta_k -z}=:\sum_{k=1}^{n}b_k T_{\hat{a},k}(z)$ and they do not belong to the set $\{\zeta_1 ,\dots,\zeta_n \}$. Now, each of the summands $T_{\hat{a},k}$ is a Möbius transform that maps the unit disk onto the half plane determined by the line $L_{\hat{a}} =\{ie^{-i\frac{1}{2}\text{Arg}\xi_{\hat{a}} } x/\ x\in\mathbb{R}\}$, containing $\bar{\xi}_{\hat{a}} \notin L_{\hat{a}} $. This is a convex domain independent of $k$.
	 	
	 	Therefore, if we assume that there is some $\xi_i $ that belongs to $\mathbb{D}$, then the point $T_{\hat{a},k}(\xi_i )$ will lie in the half plane described above, for all $k$. But then, $0=\frac{1}{b}g(\xi_i )$ is a convex sum of these points, thus it cannot lie in $L_{\hat{a}} \ni0$. Similarly if we assume that $\xi_i \in\overline{\mathbb{D}}^c $. This contradiction shows that the $\xi_k $'s are points of the unit circle.
	 	
	 	To prove that they are distinct, it suffices to show that they are simple roots of $g$. In particular, we have that 
	 	
	 	$$g'(\xi_i )=\sum_{k=1}^{n}b_k \zeta_k \dfrac{1+\bar{\xi}_{\hat{a}}}{(\zeta_k -\xi_i )^2}=-\dfrac{1+\bar{\xi}_{\hat{a}}}{\xi_i }\sum_{k=1}^{n}\dfrac{b_k }{4\sin(\frac{\theta_k -\rho_i}{2})^2 }\neq0$$
	 	and the first part follows.
	 	
	 	2. It will suffice to prove a stronger fact for the positions of the $\xi_k $'s. In particular, we will show that $\xi_k \in(\zeta_k ,\zeta_{k+1})$. From the first part we have that 
	 	$$g'(e^{i\theta})=-\dfrac{1+\bar{\xi}_{\hat{a}}}{e^{i\theta}}\sum_{k=1}^{n}\dfrac{b_k }{4\sin(\frac{\theta_k -\theta}{2})^2 }$$
	 	and so we observe that the function $\tilde{g}(\theta)=-\frac{i}{1+\bar{\xi}_{\hat{a}}}g(e^{i\theta})$ is real-valued with 
	 	
	 	$$\partial_\theta \tilde{g}(\theta)=-\sum_{k=1}^{n}\dfrac{b_k }{4\sin(\frac{\theta_k -\theta}{2})^2 }<0.$$
	 	But since $\tilde{g}(\theta_k^-)=-\infty=-\tilde{g}(\theta_k^+)$, we deduce by monotonicity, that $\tilde{g}$ is zero at exactly $n$ points $\rho_1\dots\rho_n$ and it is either $\rho_k \in(\theta_k ,\theta_{k+1})$ for all $k$, when $\tilde{g}(0)<0$, or $\rho_{k+1}\in(\theta_k ,\theta_{k+1})$ for all $k$, when $\tilde{g}(0)>0$. Hence, we proved that the following cases can occur:
	 	
	 	\begin{equation}\label{angledistribution}0\le\theta_1 <\rho_1 <\theta_2 <\dots<\theta_n <\rho_n \le2\pi\end{equation}
	 	\begin{equation}\label{angledistribution1}0\le\rho_1 <\theta_1 <\rho_2<\theta_2 <\dots<\rho_n<\theta_n  \le2\pi\end{equation}
	 	Such an ordering for the angles will give us that for any $k$, the coefficient

	 	$$\tilde{\alpha}_k =\sin(\dfrac{\theta_k -\rho_k }{2})\prod_{j=1}^{k-1}\dfrac{\sin(\frac{\theta_j -\rho_k }{2})}{\sin(\frac{\rho_j -\rho_k }{2})}\prod_{j=k+1}^{n}\dfrac{\sin(\frac{\theta_j -\rho_k }{2})}{\sin(\frac{\rho_j -\rho_k }{2})}$$
	 	is negative in the first case and positive in the second one. However, because the right-hand part of $(\ref{sumak})$ is always negative, since $\text{Arg}(\xi_{\hat{a}})/2\in(-\frac{\pi}{2},\frac{\pi}{2})$, then $\mu_0\in2\mathbb{Z}$ if $(\ref{angledistribution})$ holds or $\mu_0\in2\mathbb{Z}+1$ if $(\ref{angledistribution1})$ holds. As $a_k=(-1)^{\mu_0}\tilde{a}_k$ by definition, the result follows.
	 	
	 		\begin{figure}[ht]
	 		\centering
	 		\includegraphics[width=0.7\linewidth]{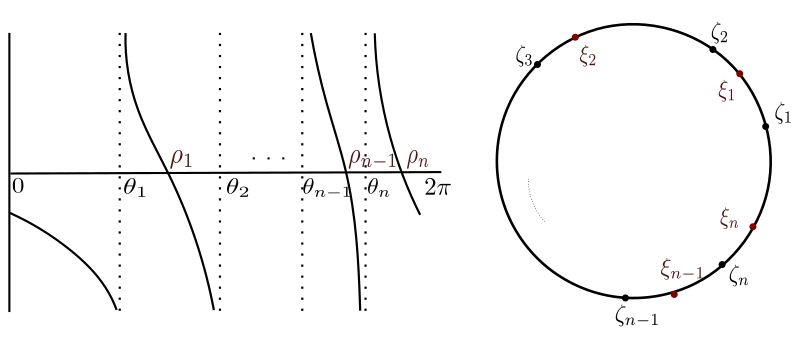}
	 		\caption{A graph of $\tilde{g}$ and the distribution of the roots in the unit circle.}
	 		\label{fig:distribution}
	 	\end{figure}
	 \end{proof}
	 
	 We, now, return to the ODE
	 
	 $$\dfrac{R(v)}{vP_n (v)}dv=(-1)^{n-1}(b+ia)dt$$
	 which becomes, using partial fraction decomposition and the parameters introduced above, 
	 $$\left(\dfrac{1}{v}+\sum_{k=1}^{n}\dfrac{2\alpha_k e^{-i\frac{\text{Arg}\xi_{\hat{a}}}{2}}}{v-\xi_k }\right)dv=(b-ia)dt.$$
	 It is, now, an easy consequence by integration that $\phi(e^{-iat}w(z,t))=e^{(b-ia)t}\phi(z)$,
	 where 
	 \begin{equation}
	 \phi(z)=z\left(\prod_{k=1}^{n}(z-\xi_k )^{2\alpha_k }\right)^{e^{-i\frac{\text{Arg}\xi_{\hat{a}}}{2}}}
	 \end{equation}
	 for all $|z|<1$ and $t\ge0$. Note that $\phi$ depends on $a$, but we shall not write $\phi=\phi_a $ unless necessary. By the preceding lemma, we deduce that the points $\xi_k $ are mapped to infinity and moreover the derivative is given by
	 \begin{equation}\label{phi'}
	 	\phi'(z)=(-1)^{n-1}e^{-i\text{Arg}\xi_{\hat{a}}}\phi(z)\dfrac{R(z)}{zP_n (z)}
	 \end{equation}
	 thus it is zero at the points $\zeta_1 ,\dots,\zeta_n $. Using this formula and recalling that $\text{Arg}\xi_{\hat{a}} =\pi-2\mathrm{Arccot}(\hat{a})$ we have that 
	 $$e^{i(\frac{\pi}{2}-\mathrm{Arccot}(\hat{a}))}\dfrac{z\phi'(z)}{\phi(z)}=e^{i(\frac{\pi}{2}-\mathrm{Arccot}(\hat{a}))}+\sum_{k=1}^{n}\dfrac{2\alpha_k z}{z-\xi_k }$$
	 for all $|z|<1$. Taking into consideration that each of the above summands is a Möbius transform, mapping the unit disk onto the right half plane determined by the perpendicular line at $\alpha_k $, then by $(\ref{sumak})$
	 
	 $$\text{Re}\left(e^{i(\frac{\pi}{2}-\mathrm{Arccot}(\hat{a}))}\dfrac{z\phi'(z)}{\phi(z)}\right)>\text{Re}(e^{i\frac{\text{Arg}\xi_{\hat{a}} }{2}})+\sum_{k=1}^{n}\alpha_k =0$$
	 and because $\phi$ is zero only at the origin and $\phi'(0)\neq0$, we deduce that $\phi$ is a $-\frac{1}{2}\text{Arg}\xi_{\hat{a}} $-spirallike function of $\mathbb{D}$. In particular, it is univalent and therefore the Loewner flow $f(z,t)$ is explicitly written as 
	 
	 \begin{equation}
	 	f(z,t)=\phi^{-1}(e^{-(b-ia)t}\phi(e^{-iat}z))\label{flowa}
	 \end{equation}
	 for all $|z|<1$ and $t\ge0$.
	 
	 Next, we study the geometry of the slits produced by the flow. We wish to answer to the question: What does the image $f (\mathbb{D},t)$ look like? A straightforward calculation for the boundary values of $\phi$, gives us that
	 $$\phi(e^{i\theta})=C\exp\left(-e^{-i\frac{\text{Arg}\xi_{\hat{a}} }{2}}\Theta(\theta)+ie^{-i\frac{\text{Arg}\xi_{\hat{a}} }{2}}\sum_{k=1}^{n}2\alpha_k \text{arg}(\sin(\dfrac{\theta-\rho_k }{2}))\right)$$
	 where $C=\exp(e^{-i\frac{\text{Arg}\xi_{\hat{a}}}{2}}i\sum_{k=1}^{n}\alpha_k \rho_k-2e^{-i\frac{\text{Arg}\xi_{\hat{a}} }{2}}\cos(\frac{\text{Arg}\xi_{\hat{a}}}{2})\log(2i))$ is a constant and
	 $$\Theta(\theta)=\theta\sin(\frac{\text{Arg}\xi_{\hat{a}} }{2})-\sum_{k=1}^{n}2\alpha_k \log|\sin(\frac{\theta-\rho_k }{2})|.$$ 
	 For each $k$, we then have the formula
	 
	 $$\phi(e^{i\theta})=\phi(e^{i\theta_k })\exp\left(-e^{-i\frac{\text{Arg}\xi_{\hat{a}}}{2}}(\Theta(\theta)-\Theta(\theta_k ))\right)$$
	 for all $\theta\in(\rho_{k-1} ,\rho_k)$. Notice that $\Theta'(\theta_k )=0$ because $\phi'(e^{i\theta_k })=0$ and we have that
	 
	 $$\Theta'(\theta)=\sin(\dfrac{\text{Arg}\xi_{\hat{a}} }{2})-\sum_{k=1}^{n}2\alpha_k \cot(\dfrac{\theta-\rho_k }{2})\quad \text{and} \quad \Theta''(\theta)=\sum_{k=1}^{n}\dfrac{2\alpha_k }{\sin(\frac{\theta-\rho_k }{2})^2 }<0.$$

	 We can, therefore, see that the image of $(\rho_{k-1}, \theta_k )$ under $\phi$ is a logarithmic spiral of angle $\frac{1}{2}\text{Arg}\xi_{\hat{a}} $ joining infinity with $\phi(\zeta_k )$ and similarly the image of $(\theta_k ,\rho_k )$ joins $\phi(\zeta_k )$ with infinity through the same spiral. In fact, the above analysis yields that the image of the unit disk under $\phi$ is the complement of $n$ logarithmic spirals $\tilde{S_k }: \phi(\zeta_k )\exp(e^{-i\frac{\text{Arg}\xi_{\hat{a}} }{2}}t)$, $t\ge0$, of angle $-\frac{\text{Arg}\xi_{\hat{a}}}{2}$ joining infinity with the tip points $\phi(\zeta_k )$. These spirals are parts of the total spiral paths $S_k : \phi(\zeta_k )\exp(e^{-i\frac{\text{Arg}\xi_{\hat{a}} }{2}}t)$, $t\in\mathbb{R}$, from infinity to the origin.

 \begin{figure}[ht]
	 	\centering
	 	\includegraphics[width=0.8\linewidth]{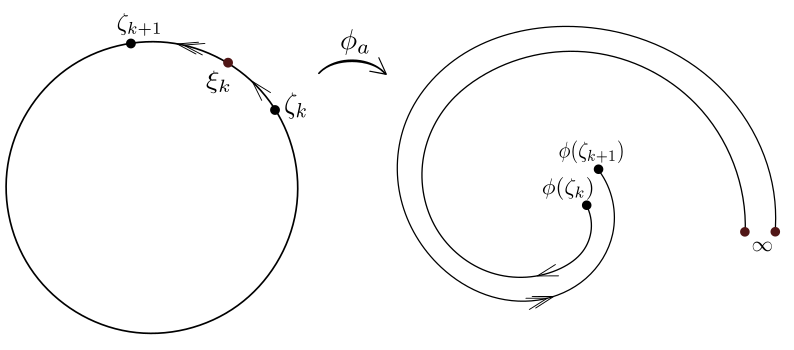}
	 	\caption{The image of the arc $(\zeta_k ,\zeta_{k+1})$ under $\phi$.}
	 	\label{fig:phimap}
	 \end{figure}
  
	 Our next step is to extend the spirals along the spiral paths connecting the origin with the tip points $\phi(\zeta_k )$, by looking at the multiplying factor in equation $(\ref{flowa})$. Indeed, we have that 
	 $$e^{-(b-ia)t}\phi(e^{i\theta})=\phi(e^{i\theta_k })\exp\left(e^{-i\frac{\text{Arg}\xi_{\hat{a}} }{2}}(-bt\sqrt{\hat{a}^2 +1}+\Theta(\theta_k )-\Theta(\theta))\right)$$
	 and this means that the function $z\mapsto e^{-(b-ia)t}z$ maps the spirals $\tilde{S_k }$ onto their extensions, along the paths $S_k $, from the points $\phi(\zeta_k )$ to $e^{-(b-ia)t}\phi(\zeta_k )$. Finally, applying the inverse $\phi^{-1}$, then the Loewner flow maps the unit disk onto $\mathbb{D}$ onto $n$ slits lying in $\mathbb{D}$ except for the endpoint $\zeta_k\in\partial\mathbb{D}$.

	 It then follows that in order to keep track of the total trajectory of the tip points $f(e^{iat}\zeta_k ,t)$, we only need to look at the preimages $\phi^{-1}(\hat{S_k })$, of the spirals $\hat{S_k }\subset S_k $ connecting the origin to the points $\phi(\zeta_k )$ respectively. Putting everything together, we have proved the following result.

	 \begin{theo}\label{Radialflow}
	 Let the points $\zeta_1 ,\dots,\zeta_n \in\partial\mathbb{D}$, the weights $b_1 ,\dots,b_n>0$ and the angle $a\in\mathbb{R}$ be arbitrarily chosen. Consider the points $\xi_1,\dots,\xi_n\in\partial\mathbb{D}$ and the exponents $a_1,\dots,a_n$ by Lemma \ref{parameters}. Then, the Loewner-Kufarev PDE in $\mathbb{D}\times[0,\infty)$, 
	 $$\dfrac{\partial f}{\partial t}(z,t)=-f'(z,t)z\sum_{k=1}^{n}b_k\dfrac{e^{iat}\zeta_k +z}{e^{iat}\zeta_k -z}$$
	 with $f(z,0)=z$, admits the unique solution $f(z,t)=\phi^{-1}(e^{-(b-ia)t}\phi(e^{-iat}z))$, where $\phi=\phi_{a,b_1,\dots b_n}$ is an $(\text{Arccot}(\frac{a}{\sum_{k=1}^{n}b_k})-\frac{\pi}{2})$ -spirallike function of $\mathbb{D}$, given by the formula
	 $$\phi(z)=z\left(\prod_{k=1}^{n}(z-\xi_k )^{2\alpha_k }\right)^{e^{i(\text{Arccot}(\frac{a}{\sum_{k=1}^{n}b_k})-\frac{\pi}{2})}}.$$
	 
	 For each $k$, with $1\le k\le n$, the trace $\hat{\gamma}_k :=\{f(e^{iat}\zeta_k ,t)/\ t\ge0 \}$ is a smooth curve lying in $\mathbb{D}$ that starts perpendicularly from $\zeta_k$, spiralling about the origin when $a\neq0$. 
	 \end{theo}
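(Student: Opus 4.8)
Most of the statement is assembled from the computations preceding it; the only genuinely new ingredient is the local behaviour of $\hat\gamma_k$ at the boundary point $\zeta_k$. First I would settle the formula for the flow. By Lemma~\ref{parameters} the $\xi_k$ are distinct points of $\partial\mathbb D\setminus\{\zeta_1,\dots,\zeta_n\}$, the exponents $\alpha_k$ are negative and satisfy $(\ref{sumak})$; together with the identity $(\ref{phi'})$ for $\phi'$ and Theorem~\ref{spiralliketheo} this gives, exactly as in the lines just above, that $\phi$ is $\big(\mathrm{Arccot}(\hat a)-\tfrac\pi2\big)$--spirallike (here $\hat a=a/b$, $b=\sum_k b_k$), hence univalent. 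Since $\mathrm{Arg}(b-ia)=\mathrm{Arccot}(\hat a)-\tfrac\pi2=:\psi$, the multiplier $e^{-(b-ia)t}=\exp\big(-e^{i\psi}|b-ia|\,t\big)$ is a spiral contraction of angle $\psi$, and as the $\psi$--spirallike domain $\phi(\mathbb D)$ is invariant under such contractions one gets $e^{-(b-ia)t}\phi(\mathbb D)\subseteq\phi(\mathbb D)$; hence $f(z,t):=\phi^{-1}\big(e^{-(b-ia)t}\phi(e^{-iat}z)\big)$ is a well-defined univalent self-map of $\mathbb D$ with $f(z,0)=z$. It then remains to verify the PDE, which is a direct differentiation of the functional identity $\phi(f(z,t))=e^{-(b-ia)t}\phi(e^{-iat}z)$ in $t$, combined with its $z$--derivative and with the explicit value $z\phi'(z)/\phi(z)=(b-ia)/\big(\sum_k b_k\tfrac{\zeta_k+z}{\zeta_k-z}-ia\big)$ read off from $(\ref{phi'})$: the terms in $a$ then cancel and one is left with $-zf'(z,t)\sum_k b_k\tfrac{e^{iat}\zeta_k+z}{e^{iat}\zeta_k-z}$ (equivalently, $f^{-1}(\cdot,t)$ is the solution $w$ of the ODE integrated above by separation of variables). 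Uniqueness is the standard uniqueness for $(\ref{kufarev})$ with prescribed initial value.

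For the geometry I would feed in the boundary--value analysis already displayed: because $\Theta'(\theta_k)=0$ and $\Theta''<0$, the map $\phi$ sends $\partial\mathbb D$ two--to--one onto $n$ logarithmic spiral slits $\tilde S_k$ of angle $\psi=-\tfrac12\mathrm{Arg}\,\xi_{\hat a}$ with tips $\phi(\zeta_k)$, so $\phi(\mathbb D)=\mathbb C\setminus\bigcup_k\tilde S_k$, and the contraction $w\mapsto e^{-(b-ia)t}w$ prolongs each $\tilde S_k$ along its spiral path $S_k$ from $\phi(\zeta_k)$ toward $0$; hence $f(\mathbb D,t)=\mathbb D\setminus\bigcup_k\phi^{-1}(\sigma_k(t))$, with $\sigma_k(t)$ the sub-arc of $S_k$ between $\phi(\zeta_k)$ and $e^{-(b-ia)t}\phi(\zeta_k)$, these $n$ slits being disjoint by univalence of $\phi$. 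Since $f(e^{iat}\zeta_k,t)=\phi^{-1}\big(e^{-(b-ia)t}\phi(\zeta_k)\big)$, the trace is $\hat\gamma_k=\phi^{-1}(\hat S_k)$, where $\hat S_k:=\{e^{-(b-ia)t}\phi(\zeta_k):t\ge 0\}$ is the piece of $S_k$ joining $\phi(\zeta_k)$ to $0$. On $\hat S_k\setminus\{\phi(\zeta_k)\}\subset\phi(\mathbb D)$ the inverse $\phi^{-1}$ is conformal, so $\hat\gamma_k$ is a smooth arc in $\mathbb D$ with $\hat\gamma_k(t)\to 0$; and for $a\ne 0$ one has $\psi\ne 0$, so $\hat S_k$ winds about the origin infinitely often, and since $\phi(z)/z$ is holomorphic and zero-free on $\mathbb D$ (so $\log(\phi(z)/z)$ is single valued, hence bounded near $0$) the curve $\hat\gamma_k=\phi^{-1}(\hat S_k)$ inherits the same winding; thus $\hat\gamma_k$ spirals to $0$. (Alternatively one reproduces the radial/angular ODE argument of Proposition~\ref{spiralling} coordinate-wise.)

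The one step that needs real care is the contact of $\hat\gamma_k$ with $\partial\mathbb D$ at $\zeta_k$, where $\phi'(\zeta_k)=0$ so that $\phi^{-1}$ is singular at $\phi(\zeta_k)$. By $(\ref{phi'})$, $\phi'$ has a \emph{simple} zero at $\zeta_k$ (since $R$ does and $P_n(\zeta_k)\ne 0$), so near $\zeta_k$ there is the normal form $\phi(z)-\phi(\zeta_k)=\tfrac12\phi''(\zeta_k)(z-\zeta_k)^2\big(1+o(1)\big)$ with $\phi''(\zeta_k)\ne 0$; thus locally $\phi$ is a conformal map composed with squaring. Under squaring the two sub-arcs of $\partial\mathbb D$ abutting $\zeta_k$ are folded onto the two edges of the slit $\tilde S_k$ emanating from $\phi(\zeta_k)$, whereas the direction perpendicular to $\partial\mathbb D$ at $\zeta_k$ -- namely the radial direction -- is carried onto the ray opposite to that slit, which is exactly the direction in which the extension $\hat S_k$ leaves $\phi(\zeta_k)$ (recall $\tilde S_k$ and $\hat S_k$ are the two halves of the analytic curve $S_k$ through $\phi(\zeta_k)$). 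Pulling back, $\hat\gamma_k$ leaves $\zeta_k$ radially, i.e. orthogonally to $\partial\mathbb D$; reparametrising $\hat S_k$ near its tip by a square root of arclength in the same normal form shows in addition that $\hat\gamma_k$ is smooth up to and including $\zeta_k$ and that $\hat\gamma_k(t)\to\zeta_k$ as $t\to 0^+$. (As in the single-slit case one may instead invoke \cite{slei1} and \cite{won}, or adapt the elementary limiting argument of Proposition~\ref{orthogonality}.) I expect this normal-form step at the $\zeta_k$'s to be the only real obstacle; everything else is bookkeeping layered onto the explicit computations already in place.
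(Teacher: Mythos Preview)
Your outline for the formula of the flow and the description of $\phi(\partial\mathbb D)$ as $n$ spiral slits coincides with the paper's approach: those facts are indeed assembled from the computations preceding the theorem, and your verification of the PDE by differentiating the functional relation is exactly what is implicit there.

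Where you genuinely diverge is in the two geometric steps. For the spiralling, the paper writes $\gamma_k(t)=r_k(t)e^{i\Theta_k(t)}$, derives the radial and angular parts of Loewner's ODE, and reads off directly that $r_k'<0$ and that $\Theta_k'$ eventually has the sign of $a$; comparing angular parts in $\phi(\gamma_k(t))=e^{-(b-ia)t}\phi(\zeta_k)$ then gives $\Theta_k\to\infty$. Your argument via the single-valued branch of $\log(\phi(z)/z)$ is shorter and more conceptual, and it yields $\Theta_k(t)=at+O(1)$ immediately; note however that it does not by itself give the monotonicity of $r_k$ or the eventual monotonicity of $\Theta_k$ that the paper's ODE computation produces, so if those finer properties are wanted you would still fall back on the ODE alternative you mention. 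For the perpendicular contact at $\zeta_k$, the paper follows the elementary limiting argument of Proposition~\ref{orthogonality}: it rotates so that the base point is $1$, differentiates the functional relation using $(\ref{phi'})$, computes $\lim_{t\to0}(1-\tilde\gamma_k(t))\tilde\gamma_k'(t)$ as a negative real number via Lemma~\ref{parameters}, checks (by looking at the image of the radius) that $\tilde\gamma_k$ does not oscillate between the half-discs near $t=0$, and then repeats the $u,\upsilon$ estimates of Proposition~\ref{orthogonality}. Your local normal-form argument, using that $\phi$ extends analytically across $\zeta_k$ with a simple critical point there so that $\phi$ is locally a squaring, is a legitimate and arguably cleaner route; the one point to make explicit is that the squaring has \emph{two} preimage directions for the tangent of $\hat S_k$, and it is the fact that $\hat\gamma_k\subset\overline{\mathbb D}$ that forces the inward radial one.
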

	 \begin{proof}
Consider the $k$-th trace $\gamma_k(t):=r_k(t)e^{i\Theta_k(t)}=f(\zeta_ke^{iat},t)$. Differentiating with respect to time and using relation $(\ref{phi'})$, then taking the real and imaginary part in $(\ref{ODEu})$, we get the radial and angular part of Loewner's equation, thus, for all $t\ge0$
$$r_k'(t)=-r_k(t)\sum_{j=1}^{n}b_j\dfrac{1-r_k^2(t)}{1-2r_k(t)\cos(\Theta_k(t)-\theta_j)+r_k^2(t)}$$
and
$$\Theta_k'(t)=a-\sum_{j=1}^{n}b_j\dfrac{2r_k(t)\sin(\Theta_k(t)-\theta_j)}{1-2r_k(t)\cos(\Theta_k(t)-\theta_j)+r_k^2(t)}$$
respectively. By the first one we deduce that $r'_k$ is negative and because $\gamma_k(t)\rightarrow0$, we have that $r_k(t)$ tends to $0$, decreasingly as $t\rightarrow\infty$. Therefore, for a sufficiently large $T>0$, we have by the second equation that for all $t>T$, the sign of $\Theta_k'(t)$ is the same as $a$. In addition, comparing the angular parts of the equality $\phi(\gamma_k(t))=e^{-(b-ia)t}\phi(\zeta_k)$ as in proposition $\ref{spiralling}$, we get that $\Theta_k(t)\rightarrow\infty$, as $t\rightarrow\infty$. Therefore, we have that $\Theta_k$ tends to infinity, increasingly if $a>0$ and decreasingly if $a<0$, after time $T$.
  
	 To conclude, we only need to verify that the curves $\hat{\gamma}_k$ intersect $\partial\mathbb{D}$ orthogonally. So, for any $k$, consider the rotated curve $\tilde{\gamma}_k=e^{-i\theta_k}\hat{\gamma}_k$, which starts from $1$. We then have that $\phi(\zeta_k \tilde{\gamma}_k(t))=e^{(ia-b)t}\phi(\zeta_k)$. Following the proof of proposition $\ref{orthogonality}$, differentiating with time, by $(\ref{phi'})$ we have that
	 $$(-1)^{n-1}e^{-i\text{Arg}\xi_{\hat{a}}}\phi(\zeta_k\tilde{\gamma}_k(t))R(\zeta_k\tilde{\gamma}_k(t))\tilde{\gamma}_k'(t)=(ia-b)e^{(ia-b)t}\phi(\zeta_k)P_n(\zeta_k\tilde{\gamma}_k(t))\tilde{\gamma}_k(t).$$
	 By the definition of the polynomials $R(z)$ and $P_n(z)$, the last relation becomes
	 $$(-1)^{n-1}\prod_{j=1}^{n}(\zeta_j-\zeta_k\tilde{\gamma}_k(t))\tilde{\gamma}_k'(t)=(ia-b)e^{(ia-b)t}\frac{\phi(\zeta_k)e^{i\text{Arg}\xi_{\hat{a}}}\tilde{\gamma}_k(t)}{\phi(\zeta_k\tilde{\gamma}_k(t))}P_n(\zeta_k\tilde{\gamma}_k(t))$$
	 or
	 $$\zeta_k(1-\tilde{\gamma}_k(t))\tilde{\gamma}_k'(t)=(ia-b)e^{(ia-b)t}\frac{\phi(\zeta_k)e^{i\text{Arg}\xi_{\hat{a}}}\tilde{\gamma}_k(t)}{\phi(\zeta_k\tilde{\gamma}_k(t))}(\zeta_k\tilde{\gamma}_k(t)-\xi_k)\prod_{j\neq k}\frac{\zeta_k\tilde{\gamma}_k(t)-\xi_j}{\zeta_k\tilde{\gamma}_k(t)-\zeta_j}.$$
	 Letting $t\rightarrow0$, relations $(\ref{lim1})$ and$(\ref{lim2})$ are written as
	 $$\lim_{t\rightarrow0}(1-\tilde{\gamma}_k(t))\tilde{\gamma}_k'(t)=-2b(-1)^{\mu_0}\sqrt{\hat{a}^2+1}\sin(\frac{\rho_k-\theta_k}{2})\prod_{j\neq k}\frac{\sin(\frac{\rho_j-\theta_k}{2})}{\sin(\frac{\theta_j-\theta_k}{2})},$$
    	 where the integer $\mu_0$ is defined by $(\ref{a_k})$. By lemma $\ref{parameters}$ the right-hand part is negative. In addition, consider $c_k(x):=\phi(\zeta_kx)$ the image of the radius with the endpoint $\zeta_k$. Then, we deduce by $(\ref{phi'})$ that $\text{arg}(c_k'(x))\rightarrow\text{arg}(\phi(\zeta_k))-\text{Arg}(\xi_{\hat{a}})$, as $x\rightarrow1$. This implies that the image of the this radius intersects the spiral $\tilde{S_k }: \phi(\zeta_k )\exp(e^{-i\frac{\text{Arg}\xi_{\hat{a}}}{2}}t)$ at its tip point tangentially and therefore, for $t$ close to zero, $\tilde{\gamma}_k(t):=u_k(t)+i\upsilon_k(t)$ does not oscillate between the upper and the lower half discs. Henceforth, we either have that $\upsilon_k(t), \upsilon_k'(t)>0$ ($\tilde{\gamma}_k$ firstly enters the upper half disc) or $\upsilon_k(t), \upsilon_k'(t)<0$  ($\tilde{\gamma}_k$ firstly enters the lower half disc). As a result, the similar argument to proposition $\ref{orthogonality}$ can be applied.
	 	 \begin{figure}[ht]
	 	\centering
	 	\includegraphics[width=0.4\linewidth]{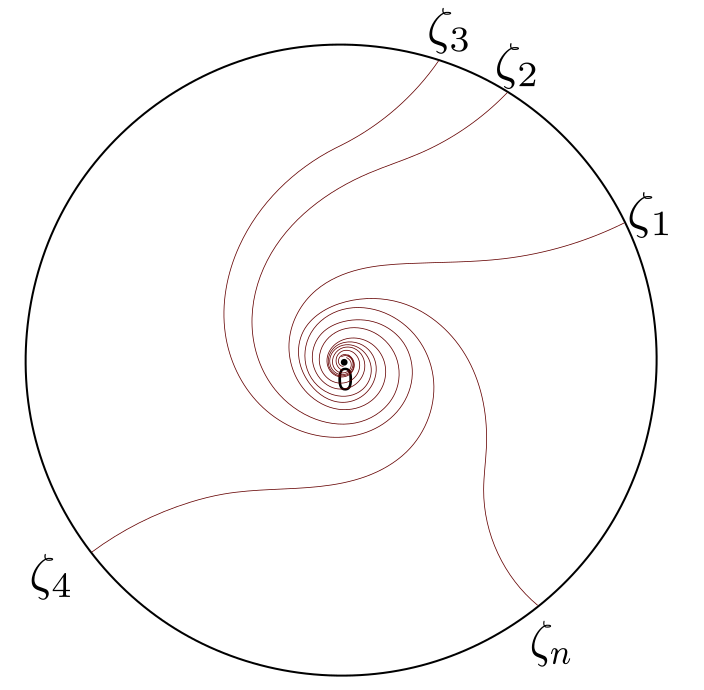}
	 	\caption{Evolution of the $n$ spiroid slits.}
	 	\label{fig:spiraoeids}
	 \end{figure}
	 
	 \end{proof}

	 \subsection{A semigroup property.} It is known that a Loewner flow $f(z,t)$ driven by a time-independent function $p(z,t)=p(z)$, forms a semigroup with fixed point the origin and it is parametrized as 
	 
	 $$f(z,t)=h^{-1}(e^{-t}h(z))$$
	 for all $z\in\mathbb{D}$ and $t\ge0$, where $h$ is a starlike function with respect to the origin, called the Koenigs function.
	 
	 In our case, however, we have that for all $z\in\mathbb{D}$ and $t\ge0$
	 \begin{equation}
	 	f(z,t)=\phi^{-1}(e^{-(b-ia)t}\phi(e^{-iat}z))\label{spiroid}
	 \end{equation}
	 where $\phi$ is a spirallike function of angle $-(\frac{\pi}{2}-\mathrm{Arccot}(\frac{a}{b})$, with respect to the origin and we directly observe that $f$ satisfies the functional equation
	 
	 \begin{equation}
	 	f(e^{ia(t+s)}z,t+s)=f(e^{iat}f(e^{ias}z,s),t)\label{semigroup}
	 \end{equation}
	 which implies that $f(e^{iat}z,t)$ is a semigroup with Denjoy-Wolff point the origin and spectral value $e^{-(b-ia)t}$. 
	 
	 From this point of view, the Koenigs function is the spirallike function $\phi$ and the infinitesimal generator of the semigroup, which in terms of the Loewner equation is just the driving function times $z$, is written as 
	 \begin{equation}
	 	zp(z)=iaz+(b-ia)\dfrac{\phi(z)}{\phi'(z)}\label{phiODE}
	 \end{equation}
	 as equation $(\ref{phi'})$ shows. We observe that although the driving function is time dependent, the dependence is "weak" in the sense that $p(z,t)=p(e^{-iat}z)$. Notice that for $a=0$, we arrive at the case of the time independent driving function, $f(z,t)$ is a semigroup with Denjoy-Wolff point the origin and $\phi$, the Koenigs function, is starlike.
	 
	 Reasoning conversely, it makes sense to consider a continuous family of functions $(f(\cdot,t))_{t\ge0}$ with $f(0,t)=0$ and $f'(0,t)=e^{-bt}$, satisfying equation $(\ref{semigroup})$, for a given $a\in\mathbb{R}$.  Then, there exists a spirallike function $\phi\in H(\mathbb{D})$ of angle $-(\frac{\pi}{2}-\mathrm{Arccot}(\frac{a}{b}))$ so that
	 $$f(e^{iat}z,t)=\phi^{-1}(e^{-(b-ia)t}\phi(z))$$
	 for all $z\in\mathbb{D}$ and $t\ge0$ and as a result $f$ is a Loewner flow which has the same form as the spiroid flow, with driving function $p(z,t)=p(e^{-iat}z)$, where $p$ is again given by $(\ref{phiODE})$.
	 
	 \textbf{Spiroid flows.}  Let us formualate the preceding discussion in the following proposition.
	 
	 \begin{proposition}\label{loewner-semigroup}
	         Let $p\in H(\mathbb{D})$, with positive real part, $p(0)=b$ and let $p(z,t)=p(e^{-iat}z)$ be the driving function for Loewner's PDE. Then, there exists a spirallike function $\phi$, of angle $-(\frac{\pi}{2}-\mathrm{Arccot}(\frac{a}{b}))$, so that the Loewner flow $f(z,t)$ is given by $(\ref{spiroid})$.
	 \end{proposition}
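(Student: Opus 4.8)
The statement is the \emph{converse} direction of the computation carried out in Section~3.3: the explicit driving density $\sum_k b_k\frac{\zeta_k+z}{\zeta_k-z}$ is replaced by an arbitrary $p\in H(\mathbb{D})$ with $\mathrm{Re}\,p>0$ and $p(0)=b$, so the plan is simply to make that reasoning rigorous in this generality. Passing to the associated Loewner ODE as in Sections~3.1 and~3.3 — one solves $\partial_t w=w\,p(e^{-iat}w)$ with $w(z,0)=z$, so that $w_t=f_t^{-1}$ — the substitution $v=e^{-iat}w$ renders the equation \emph{autonomous}:
$$\frac{dv}{dt}=v\bigl(p(v)-ia\bigr),\qquad v(z,0)=z.$$
Separation of variables then suggests introducing
$$\phi(z):=z\,\exp\!\left(\int_0^z\!\left(\frac{b-ia}{\zeta\,(p(\zeta)-ia)}-\frac1\zeta\right)d\zeta\right),$$
equivalently the solution of $\dfrac{z\phi'(z)}{\phi(z)}=\dfrac{b-ia}{p(z)-ia}$ with $\phi(0)=0$, $\phi'(0)=1$ (which is the generator relation $z p(z)=iaz+(b-ia)\phi(z)/\phi'(z)$). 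First I would check that $\phi\in H(\mathbb{D})$: the integrand's only candidate singularity is $\zeta=0$, where the bracket equals $\dfrac{b-p(\zeta)}{\zeta\,(p(\zeta)-ia)}$, whose numerator vanishes; and $p(\zeta)-ia$ never vanishes on $\mathbb{D}$ since $\mathrm{Re}\bigl(p(\zeta)-ia\bigr)=\mathrm{Re}\,p(\zeta)>0$. Hence $\phi$ is holomorphic, vanishes only at the origin, and $\phi'(0)=1\neq0$.

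The heart of the argument is showing that $\phi$ is spirallike of the asserted angle. Put $\psi:=\mathrm{Arccot}(a/b)-\frac\pi2=\arg(b-ia)\in(-\frac\pi2,\frac\pi2)$; then $e^{-i\psi}(b-ia)=\abs{b-ia}>0$, hence
$$e^{-i\psi}\,\frac{z\phi'(z)}{\phi(z)}=\frac{\abs{b-ia}}{p(z)-ia},$$
and since $p(z)-ia$ lies in the right half-plane so does its reciprocal, so the left-hand side has strictly positive real part throughout $\mathbb{D}$. By Theorem~\ref{spiralliketheo}, $\phi$ is $\psi$-spirallike, and in particular univalent. This is exactly the step that forces the angle $-(\frac\pi2-\mathrm{Arccot}(a/b))$; everything else is bookkeeping, and I expect no genuine obstacle beyond this computation and the holomorphy check above.

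Univalence of $\phi$ lets me set $f(z,t):=\phi^{-1}\bigl(e^{-(b-ia)t}\phi(e^{-iat}z)\bigr)$ on $\mathbb{D}\times[0,\infty)$; this is legitimate because $e^{-iat}z\in\mathbb{D}$ and, writing $b-ia=\abs{b-ia}e^{i\psi}$, the map $w\mapsto e^{-(b-ia)t}w=\exp(-e^{i\psi}\abs{b-ia}t)\,w$ is, after the reparametrisation $s=\abs{b-ia}t\ge0$, the spiral flow that by definition leaves the $\psi$-spirallike domain $\phi(\mathbb{D})$ invariant, so the argument of $\phi^{-1}$ stays in $\phi(\mathbb{D})$. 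It then remains to verify that $f$ solves the Loewner PDE with $f(z,0)=z$: the initial condition is immediate, and differentiating the identity $\phi(f(z,t))=e^{-(b-ia)t}\phi(e^{-iat}z)$ with respect to $t$ and to $z$, eliminating $\phi'(f(z,t))$ between the two relations, and substituting $z\phi'(z)/\phi(z)=(b-ia)/(p(z)-ia)$ returns $\partial_t f=-z f'(z,t)\,p(e^{-iat}z)$. (Alternatively, each of the substitutions $w_t=f_t^{-1}$, $v=e^{-iat}w$, $\phi(v(z,t))=e^{(b-ia)t}\phi(z)$ is reversible.) By the uniqueness of solutions of the Loewner PDE quoted in the Introduction, this $f$ is therefore the Loewner flow, which is precisely the form \eqref{spiroid}.
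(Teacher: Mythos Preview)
Your proof is correct and follows essentially the same approach as the paper: define $\phi$ as the solution of the generator ODE $z\phi'/\phi=(b-ia)/(p(z)-ia)$ (equivalently \eqref{phiODE}), check spirallikeness via Theorem~\ref{spiralliketheo}, and read off the flow formula \eqref{spiroid}. The paper's own proof is only two sentences, so your version is considerably more thorough --- in particular your justification that $\phi\in H(\mathbb{D})$, that the argument of $\phi^{-1}$ remains in $\phi(\mathbb{D})$, and the explicit verification of the PDE fill in details the paper leaves implicit.
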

	 \begin{proof}
	  We consider $\phi$ as the solution to ODE $(\ref{phiODE})$, such that $\phi(0)=0$. By the hypothesis, taking the real part $\phi$ is spirallike and the result folllows.
	 \end{proof}
Due to the semigroup theory, the spiroids of the preceding section follow a simple geometric structure. We borrow the definition of self similarity from \cite{mar}, according to which, two sets $A,B\subset\mathbb{C}$ are similar if they differ by a translation and a rotation. For each $T>0$, denote by $\hat{\gamma}_k(T) =:\gamma_k([T,\infty])$ the tail of the $k$-th spiroid. Then, the inverse of the Loewner flow, $g(\cdot,T):=f^{-1}(\cdot,T)$ maps $\mathbb{D}\setminus\gamma_k([0,T])$ onto $\mathbb{D}$.
	 \begin{figure}[ht]
	 	\centering
	 	\includegraphics[width=0.8\linewidth]{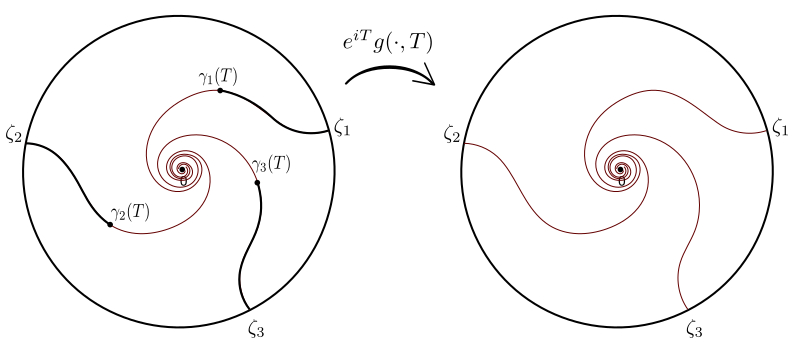}
	 	\caption{Self-similarity of the spiroids.}
	 	\label{fig:selfsimilar}
	 \end{figure}
  
It is clear that we can present a whole class of solutions $f(z,t)$, those driven by a function of the form of proposition $\ref{loewner-semigroup}$. We shall refer to such flows as \textit{spiroid} flows. An explicit flow, for instance, can be found in \cite{sola1}, where the driving measures $d\mu_t=\rho(\cdot,t)dt$, with densities $\rho(\theta,t)=2\sin^2(\pi\theta-ct)$, are being treated. The preceding family of measures corresponds to the driving function $p(z,t)=1-e^{2ict}/z$.

	 \section{Explicit solutions in the upper half plane.}
	With the machinery acquired from the previous section, we are able to transfer spiroid flows in the upper half plane and also, work similarly to deduce other cases of chordal Loewner flows. For this, given the real parameters $k_1,\dots ,k_n$ in increasing order and given $b_1,\dots,b_n>0$, we consider the chordal Loewner equation
	 \begin{equation}\label{chordalODE}
	 \dfrac{dw}{dt}(z,t)=\sum_{j=1}^{n}\dfrac{2b_j }{w(z,t)-k_j \sqrt{1-t}}
	 \end{equation}
	 for all $(z,t)\in\mathbb{H}\times[0,1)$, with initial value $w(z,0)=z$. For $n=1$, this case is studied in \cite{kad} and \cite{mar}. The technique to solve this equation will be to transform the right-hand part to a time-independent expression, as we do in the preceding chapter. Therefore, by taking the transform $v=(1-t)^{-1/2}w$, equation $(\ref{chordalODE})$ turns into the equation 
	 \begin{equation}\label{chordalODE2}
	 \dfrac{dv}{dt}=\dfrac{1}{2(1-t)}\left(v+\sum_{j=1}^{n}\dfrac{4b_j }{v-k_j}\right).
	 \end{equation}
	 which can be solved by separating variables.
	  Now, we will consider the polynomial
	 \begin{equation}\label{polynomial}
	 P(z):=z\prod_{j=1}^{n}(z-k_j)+\sum_{j=1}^{n}4b_j \prod_{i\neq j}(z-k_i)
	 \end{equation}
	 which is an $(n+1)$-degree polynomial with real coefficients. Thus, it has exactly $n+1$ complex roots. But because $P(k_m)=4b_m \prod_{j\neq m}(k_m-k_j)$, for $1\le m\le n$ and hence $\text{sgn}P(k_m )=(-1)^{n-m}$, there exist $\lambda_m\in(k_m ,k_{m+1})$ roots of $P$. So, since the $n-1$ roots of the polynomial are real and its coefficients are real, the remaining two roots can either be real, say $\rho_1, \rho_2 \in\mathbb{R}$ or non-real and conjugate, say $\beta, \bar{\beta}$ with $\beta\in\mathbb{H}$.
	 
	 \subsection{Spirals.} Assume the second case, thus, there exists some $\beta\in\mathbb{H}$, so that $P$ can be written in the form
	 \begin{equation}\label{polynomial2}
	 P(z)=\prod_{j=1}^{n-1}(z-\lambda_j)(z-\beta)(z-\bar{\beta}).
	 \end{equation}
	 Applying partial fraction decomposition, we introduce the numbers
	 \begin{equation}\label{chordalcoefficients2}
	     B=\dfrac{\prod_{j=1}^{n}(\beta-k_j)}{2i\text{Im}\beta\prod_{j=1}^{n-1}(\beta-\lambda_j)}, \quad A_j =\dfrac{\prod_{i=1}^{n}(\lambda_j-k_i)}{\prod_{i\neq j}(\lambda_j-\lambda_i)|\lambda_j-\beta|^2}<0,
	 \end{equation}
	 for $1\le j\le n-1$, for which the following relation holds:
	 \begin{equation}\label{chordalcoefficients}
	     \prod_{j=1}^{n}(z-k_j )=P(z)\left(\sum_{j=1}^{n-1}\dfrac{A_j }{z-\lambda_j}+\dfrac{B}{z-\beta}+\dfrac{\bar{B}}{z-\bar{\beta}}\right).
	 \end{equation}
	 Defining $B=:|B|e^{i\psi}$ and $\frac{A_j }{B}=-|\frac{A_j }{B}|e^{-i\psi}=:-a_j e^{-i\psi}$, then by $(\ref{chordalcoefficients})$, ODE $(\ref{chordalODE2})$ becomes
	 $$\left(\sum_{j=1}^{n-1}\dfrac{-a_j e^{-i\psi}}{v-\lambda_j}+\dfrac{1}{v-\beta}+\dfrac{e^{-2i\psi}}{v-\bar{\beta}}\right)dv=\dfrac{e^{-i\psi}}{2|B|(1-t)}dt.$$
	 Integrating the preceding formula, we deduce the implicit equation
	 \begin{equation}\label{chordalimplicit}
	     h(v(z,t))=(1-t)^{-\frac{1}{2|B|}e^{-i\psi}}h(z)
	 \end{equation}
	 for all $z\in\mathbb{H}$ and $t\in[0,1)$, where $h$ is given by
	 \begin{equation}\label{h(z)}
	     h(z)=\prod_{j=1}^{n-1}(z-\lambda_j)^{-a_j e^{-i\psi}}(z-\beta)(z-\bar{\beta})^{e^{-2i\psi}}.
	 \end{equation}
	 A straightforward differentiation and $(\ref{chordalcoefficients})$ gives us the derivative
	 \begin{equation}\label{h'(z)}
	     h'(z)=\dfrac{e^{-i\psi}}{|B|}\prod_{j=1}^{n-1}(z-\lambda_j)^{-a_j e^{-i\psi}-1}(z-\bar{\beta})^{e^{-2i\psi}-1} \prod_{j=1}^{n}(z-k_j ).
	 \end{equation}
	  In the proposition below we prove that $h$ maps the upper half plane onto a spirallike domain.
	  \begin{proposition}\label{chordalspirallike}
	      The function $h\in H(\mathbb{H})$ given by $(\ref{h(z)})$ is univalent and maps the upper half plane onto a $(-\psi)$-spirallike domain with respect to the origin, where the angle $\psi$ is determined by $(\ref{chordalcoefficients2})$ and it ranges in $(-\dfrac{\pi}{2},\dfrac{\pi}{2})$.
	  \end{proposition}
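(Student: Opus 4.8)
The plan is to establish the proposition through the analytic characterisation of spirallike maps of $\mathbb{H}$ from Proposition~\ref{spirallikeinH}, applied with spiral angle $-\psi$ in place of $\psi$. Concretely, I would check that $h\in H(\mathbb{H})$, that $h'(\beta)\neq0$ and $h(z)=0$ only for $z=\beta$, and that
$$\text{Im}\left(e^{i\psi}\,\dfrac{(z-\beta)(z-\bar{\beta})\,h'(z)}{h(z)}\right)>0\qquad\text{for every }z\in\mathbb{H};$$
by Proposition~\ref{spirallikeinH} these facts force $h$ to be univalent and to map $\mathbb{H}$ onto a $(-\psi)$-spirallike domain with respect to $h(\beta)=0$. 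The bound $\psi\in(-\frac{\pi}{2},\frac{\pi}{2})$ I would treat separately at the end.

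First I would record the elementary points. Since $\lambda_{1},\dots,\lambda_{n-1}\in\mathbb{R}$ and $\bar{\beta}$ lies in the lower half plane, the maps $z\mapsto z-\lambda_{j}$ and $z\mapsto z-\bar{\beta}$ carry $\mathbb{H}$ into $\mathbb{H}$, so the complex powers in $(\ref{h(z)})$ are given by single-valued holomorphic branches and $h\in H(\mathbb{H})$; the only factor of $(\ref{h(z)})$ that vanishes somewhere in $\mathbb{H}$ is $(z-\beta)$, whence $h(z)=0$ exactly when $z=\beta$, and $(\ref{h'(z)})$ exhibits $h'(\beta)$ as a nonzero constant times $\prod_{j=1}^{n}(\beta-k_{j})\neq0$. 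Dividing $(\ref{h'(z)})$ by $(\ref{h(z)})$ and cancelling with the factorisation $(\ref{polynomial2})$ of $P$ gives the key identity
$$\dfrac{(z-\beta)(z-\bar{\beta})\,h'(z)}{h(z)}=\dfrac{e^{-i\psi}}{|B|}\cdot\dfrac{\prod_{j=1}^{n}(z-k_{j})}{\prod_{j=1}^{n-1}(z-\lambda_{j})},$$
so that $e^{i\psi}(z-\beta)(z-\bar{\beta})h'(z)/h(z)=|B|^{-1}F(z)$, where $F(z):=\prod_{j=1}^{n}(z-k_{j})/\prod_{j=1}^{n-1}(z-\lambda_{j})$ is a rational function with only real zeros and poles.

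Next I would prove $\text{Im}\,F>0$ on $\mathbb{H}$. Because the numerator has degree one more than the denominator and all poles are simple, $F$ admits the partial fraction expansion $F(z)=z+c_{0}+\sum_{j=1}^{n-1}\frac{c_{j}}{z-\lambda_{j}}$ with $c_{0}\in\mathbb{R}$ and $c_{j}=\prod_{i=1}^{n}(\lambda_{j}-k_{i})/\prod_{i\neq j}(\lambda_{j}-\lambda_{i})$. Using the interlacing $k_{1}<\lambda_{1}<k_{2}<\dots<\lambda_{n-1}<k_{n}$ noted right after $(\ref{polynomial})$, a sign count gives $c_{j}<0$ for every $j$ (equivalently $c_{j}=A_{j}\,|\lambda_{j}-\beta|^{2}<0$ by $(\ref{chordalcoefficients2})$). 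Hence, for $z=x+iy$ with $y>0$,
$$\text{Im}\,F(z)=y-\sum_{j=1}^{n-1}\dfrac{c_{j}\,y}{|z-\lambda_{j}|^{2}}=y\Bigl(1-\sum_{j=1}^{n-1}\dfrac{c_{j}}{|z-\lambda_{j}|^{2}}\Bigr)>0,$$
and since $|B|>0$ this is exactly $\text{Im}\bigl(e^{i\psi}(z-\beta)(z-\bar{\beta})h'(z)/h(z)\bigr)>0$ on $\mathbb{H}$. Invoking Proposition~\ref{spirallikeinH} with angle $-\psi$ then yields the univalence of $h$ and the spirallikeness of $h(\mathbb{H})$.

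Finally, for the range of $\psi$ I would multiply $(\ref{chordalcoefficients})$ by $z$ and let $z\to\infty$: the left-hand side tends to $1$ (it is $z$ times a ratio of monic polynomials of degrees $n$ and $n+1$), while the right-hand side tends to $\sum_{j=1}^{n-1}A_{j}+B+\bar{B}$, so $2\,\text{Re}\,B=1-\sum_{j=1}^{n-1}A_{j}>1$ because each $A_{j}<0$. As $B\neq0$ — its numerator and denominator in $(\ref{chordalcoefficients2})$ are nonzero — the inequality $\text{Re}\,B>0$ forces $\psi=\text{Arg}\,B\in(-\frac{\pi}{2},\frac{\pi}{2})$, completing the proof. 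I expect the only genuinely non-routine step to be the identity for $(z-\beta)(z-\bar{\beta})h'/h$ together with the positivity of $\text{Im}\,F$: unlike the radial situation (Lemma~\ref{parameters} and the discussion preceding Theorem~\ref{Radialflow}), where each summand of the relevant partial fraction decomposition already has imaginary part of the correct sign and one concludes by a convex-combination argument, here the individual summands are not signed, and one must exploit the global interlacing of the $k_{j}$ with the $\lambda_{j}$ to obtain the Herglotz-type positivity of $F$.
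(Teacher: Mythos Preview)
Your proof is correct and shares the overall strategy with the paper---verify the hypotheses of Proposition~\ref{spirallikeinH} by reducing the spirallikeness condition to the positivity of $\text{Im}\,F$ on $\mathbb{H}$, where $F(z)=\prod_{j}(z-k_j)/\prod_{j}(z-\lambda_j)$---but the two arguments diverge at this positivity step. You expand $F$ itself in partial fractions over its simple real poles $\lambda_j$, observe that the residues $c_j$ are negative thanks to the interlacing $k_1<\lambda_1<\dots<\lambda_{n-1}<k_n$, and conclude at once that $F$ is a Pick function. The paper instead passes to the reciprocal $1/F$, rewrites it via $(\ref{polynomial2})$ as $P(z)/[(z-\beta)(z-\bar{\beta})\prod_j(z-k_j)]$, and then carries out a telescoping computation exploiting $P(\beta)=0$ (equivalently $\beta+\sum_j 4b_j/(\beta-k_j)=0$) to arrive at the closed form $\sum_j 4b_j/[(z-k_j)|\beta-k_j|^2]$, each term of which visibly has negative imaginary part. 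Your route is shorter and needs only the interlacing, which holds regardless of the nature of the remaining two roots of $P$; the paper's route is longer but yields an explicit decomposition in terms of the original data $(k_j,b_j,\beta)$, and this formula is reused verbatim in Section~4.5 when transporting the chordal flow back to the radial setting. For the range of $\psi$ you extract $2\,\text{Re}\,B=1-\sum_j A_j>1$ from the leading asymptotics of $(\ref{chordalcoefficients})$, while the paper obtains $\text{Re}\,B>0$ by specialising the established inequality $(\ref{Hspirallike})$ at $z=\beta$ and reading off $B$ from $(\ref{chordalcoefficients2})$; both are valid. One small quibble: your closing remark that ``the individual summands are not signed'' is at odds with your own decomposition, where each term $c_j/(z-\lambda_j)$ does have positive imaginary part once the interlacing forces $c_j<0$.
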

	  \begin{proof}
	  By $(\ref{h'(z)})$, we have that $h'$ is nonzero in $\mathbb{H}$ and $h$ is zero if and only if $z=\beta$. Then, applying proposition $\ref{spirallikeinH}$, due to the relations $(\ref{h(z)})$ and $(\ref{h'(z)})$, we have that
	  \begin{align*}
	     \text{Im}\left(e^{i\psi}\dfrac{(z-\beta)(z-\bar{\beta})h'(z)}{h(z)}\right)&=\dfrac{1}{|B|}\text{Im}\left(\dfrac{\prod_{j=1}^{n}(z-k_j )}{\prod_{j=1}^{n-1}(z-\lambda_j )}\right)\\
	      &=\dfrac{1}{|B|}\text{Im}\left(\dfrac{(z-\beta)(z-\bar{\beta})\prod_{j=1}^{n}(z-k_j )}{P(z)}\right)
      \end{align*}
	where the last equality is due to $(\ref{polynomial2})$. It suffices to show that for all $z\in\mathbb{H}$,
	  \begin{equation}\label{Hspirallike}
	  \text{Im}\left(\dfrac{\prod_{j=1}^{n-1}(z-\lambda_j )}{\prod_{j=1}^{n}(z-k_j )}\right)=\text{Im}\left(\dfrac{P(z)}{(z-\beta)(z-\bar{\beta})\prod_{j=1}^{n}(z-k_j )}\right)<0.
	  \end{equation}
	    
	     To see this, we first need to take into account that $\beta$ satisfies the relation $$\beta+\sum_{j=1}^{n}\frac{4b_j}{\beta-k_j}=0,$$
	since it is a root of the polynomial P. Then, the right-hand part of $(\ref{Hspirallike})$ is written as follows:
\begin{align*}
     &\dfrac{1}{2i\text{Im}\beta}\left(\dfrac{z}{z-\beta}-\dfrac{z}{z-\bar{\beta}}+\sum_{j=1}^{n}\dfrac{4b_j}{z-k_j}(\dfrac{1}{z-\beta}-\dfrac{1}{z-\bar{\beta}})\right)\\
     &\quad=\dfrac{1}{2i\text{Im}\beta}\left(\dfrac{\beta}{z-\beta}-\dfrac{\bar{\beta}}{z-\bar{\beta}}+\sum_{j=1}^{n}\dfrac{4b_j}{z-k_j}\dfrac{1}{z-\beta}-\sum_{j=1}^{n}\dfrac{4b_j}{z-k_j}\dfrac{1}{z-\bar{\beta}}\right)\\
	&\quad=\dfrac{1}{2i\text{Im}\beta}\left(\dfrac{\beta}{z-\beta}-\dfrac{\bar{\beta}}{z-\bar{\beta}}+\sum_{j=1}^{n}\dfrac{\frac{4b_j}{\beta-k_j}}{z-\beta}-\sum_{j=1}^{n}\dfrac{\frac{4b_j}{\beta-k_j}}{z-k_j}-\sum_{j=1}^{n}\dfrac{\frac{4b_j}{\bar{\beta}-k_j}}{z-\bar{\beta}}+\sum_{j=1}^{n}\dfrac{\frac{4b_j}{\bar{\beta}-k_j}}{z-k_j}\right)\\
	&\quad=\dfrac{1}{2i\text{Im}\beta}\left(\sum_{j=1}^{n}\dfrac{\frac{4b_j}{\bar{\beta}-k_j}}{z-k_j}-\sum_{j=1}^{n}\dfrac{\frac{4b_j}{\beta-k_j}}{z-k_j}\right)=\dfrac{1}{2i\text{Im}\beta}\sum_{j=1}^{n}4b_j\dfrac{\beta-\bar{\beta}}{(z-k_j)|\beta-k_j|^2}
\end{align*}
and each of the preceding summands has negative imaginary part for all $z\in\mathbb{H}$. 
 
 Therefore, $(\ref{Hspirallike})$ holds and we deduce that $h$ is $(-\psi)$-spirallike and the result follows. The fact that $-\pi/2<\psi<\pi/2$, follows directly from $(\ref{chordalcoefficients2})$ and $(\ref{Hspirallike})$.
	  \end{proof}
\begin{corollary}\label{chordalsolution}
Under the assumption that $P$ has a complex root $\beta\in\mathbb{H}$, the solution to Loewner's ODE $(\ref{chordalODE})$ is given by the formula
	  \begin{equation}\label{chordalexplicit}
	      w(z,t)=(1-t)^{1/2}h^{-1}((1-t)^{-\frac{1}{2|B|}e^{-i\psi}}h(z)),
	  \end{equation}
	 for all $z\in\mathbb{H}$ and $t\in[0,1)$, where $h$ is given by $(\ref{h(z)})$.
\end{corollary}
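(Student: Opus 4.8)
The plan is to read the corollary off from the computations immediately preceding it, so the argument is short and mostly bookkeeping. First I would recall that, by construction, $w(z,t)$ solves $(\ref{chordalODE})$ with $w(z,0)=z$ if and only if $v(z,t):=(1-t)^{-1/2}w(z,t)$ solves $(\ref{chordalODE2})$ with $v(z,0)=z$, and that the right-hand side of $(\ref{chordalODE2})$ is precisely $\frac{1}{2(1-t)}\cdot\frac{P(v)}{\prod_{j=1}^{n}(v-k_j)}$ by the very definition $(\ref{polynomial})$ of $P$. Hence $(\ref{chordalODE2})$ separates, and the partial fraction identity $(\ref{chordalcoefficients})$ rewrites it, after dividing through by $B=|B|e^{i\psi}$, as the separated equation displayed just before $(\ref{chordalimplicit})$. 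Integrating that equation from $0$ to $t$ — using that for $z\in\mathbb{H}$ each factor $z-\lambda_j$ and $z-\bar\beta$ stays in the upper half plane, so the powers appearing in $(\ref{h(z)})$ are single-valued and holomorphic on $\mathbb{H}$ — yields exactly the implicit relation $(\ref{chordalimplicit})$, the constant of integration being fixed by $v(z,0)=w(z,0)=z$.

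It then remains to solve $(\ref{chordalimplicit})$ for $v$. Here I would invoke Proposition \ref{chordalspirallike}: $h$ is univalent on $\mathbb{H}$, hence a bijection onto $h(\mathbb{H})$; since $(\ref{chordalimplicit})$ exhibits $(1-t)^{-\frac{1}{2|B|}e^{-i\psi}}h(z)=h(v(z,t))$ as a point of $h(\mathbb{H})$, we may apply $h^{-1}$ to both sides and get $v(z,t)=h^{-1}\big((1-t)^{-\frac{1}{2|B|}e^{-i\psi}}h(z)\big)$. Undoing the substitution, $w(z,t)=(1-t)^{1/2}v(z,t)$ gives precisely $(\ref{chordalexplicit})$.

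The only genuinely delicate point is the legitimacy of the separation of variables and of the branch choices over the whole interval $[0,1)$, i.e. that the trajectory $v(z,t)$ stays in $\mathbb{H}$ (equivalently, that the right-hand side of $(\ref{chordalexplicit})$ stays in $h(\mathbb{H})$) and never meets a zero of $P$ while $t\in[0,1)$. For the chordal flow this is guaranteed by the standard theory recalled in the introduction — $g_t(z)\in\mathbb{H}$ as long as the solution is defined — so along the trajectory $P(v)\neq 0$ and the manipulations above are valid. Alternatively one may sidestep this discussion altogether by defining $w(z,t)$ through the right-hand side of $(\ref{chordalexplicit})$ and checking, by a direct differentiation using $(\ref{h'(z)})$ and $(\ref{chordalcoefficients})$, that it satisfies $(\ref{chordalODE})$ with $w(z,0)=z$, then appealing to uniqueness of solutions of the Loewner ODE; I expect the differentiation route to be the cleaner way to present it rigorously.
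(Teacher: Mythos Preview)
Your proposal is correct and follows essentially the same approach as the paper: invoke the implicit relation $(\ref{chordalimplicit})$, use the univalence of $h$ from Proposition~\ref{chordalspirallike} to invert it, and undo the substitution $v=(1-t)^{-1/2}w$. The paper's own proof is a single sentence (``By $(\ref{chordalimplicit})$ and the univalence of $h$, $(\ref{chordalexplicit})$ is direct''), since it treats the derivation of $(\ref{chordalimplicit})$ as already accomplished in the text preceding the corollary; your additional discussion of branch choices and the alternative verification by direct differentiation is reasonable but not needed here.
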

	    \begin{proof}
	         By $(\ref{chordalimplicit})$ and the univalence of $h$, $(\ref{chordalexplicit})$ is direct.
	    \end{proof}
	    
	     Our next step is to describe the geometry of the hulls produced by the chordal Loewner PDE, corresponding to $(\ref{chordalODE})$. By the corollary above, its unique solution is the Loewner flow in $\mathbb{H}\times[0,1)$,
	     	    $$f(z,t)=h^{-1}((1-t)^{\frac{1}{2|B|}e^{-i\psi}}h((1-t)^{-1/2}z)).$$

	      We shall show that $h$ maps the upper half plane onto the complement of $n$ logarithmic spirals, as in the previous section. Therefore, we study the behaviour of $h$ on the real line. A direct computation shows that	
	      \begin{equation}\label{boundaryh(z)}
	     h(x)=\exp[e^{-i\psi} S(x)-ie^{-i\psi}\sum_{j=1}^{n}a_j\text{arg}(x-\lambda_j)]
	      \end{equation}
	      for all $x$ in the extended real line, where $S$ is given by
	      \begin{equation}\label{S(x)}
	          S(x)=-\sum_{j=1}^{n}a_j\log|x-\lambda_j|+2\cos(\psi)\log|x-\beta|-2\sin(\psi)\text{arg}(x-\beta).
	      \end{equation}
	     This shows that the points $\lambda_1,\dots,\lambda_{n-1}$ are mapped to $\infty$. Note that $S(\pm\infty)=\infty$ as well, because $-\sum_{j=1}^{n-1}a_j+2\cos(\psi)=1/\abs B>0$, by comparing the coefficients in $(\ref{chordalcoefficients})$. This of course agrees with the hydrodynamic condition that $f(\cdot,t)$ satisfies. Thus, exactly $n$ points of $\overline{\mathbb{R}}$ are mapped to infinity. Differentiating and using relation $(\ref{chordalcoefficients})$ we also have that
	      \begin{align*}\label{S'(x)}
	          S'(x)&=-\sum_{j=1}^{n-1}\dfrac{a_j}{x-\lambda_j}+\dfrac{1+e^{-2i\psi}}{e^{-i\psi}}\dfrac{x-\text{Re}\beta}{|x-\beta|^2}+i\dfrac{1-e^{-2i\psi}}{e^{-i\psi}}\dfrac{\text{Im}\beta}{|x-\beta|^2}\\
	          &=\dfrac{1}{|B|}\dfrac{\prod_{j=1}^{n}(x-k_j)}{\prod_{j=1}^{n-1}(x-\lambda_j)|x-\beta|^2}
	      \end{align*}
	      which implies that $S'$ has constant sign, thus $S$ is monotonic, in each of the intervals $(-\infty,k_1)$, $(k_\mu ,\lambda_\mu)$, $(\lambda_\mu,k_{\mu+1})$ for $1\le\mu\le n-1$ and $(k_n,+\infty)$, and that the tip points of the spirals are the points $h(k_j)$, as expected by $(\ref{h'(z)})$.
	      
	      We are, now, ready to present the main result of this section.
	      \begin{theo}
	      Let $k_1,\dots,k_n$ be real points in increasing order and let $b_1,\dots,b_n$ be positive numbers, such that the polynomial $P$ given by $(\ref{polynomial})$, has a complex root $\beta\in\mathbb{H}$. Then, the solution to the chordal Loewner PDE in $\mathbb{H}\times[0,1)$,
	      \begin{equation}\label{chordalPDE}
	          \dfrac{\partial f}{\partial t}(z,t)=-f'(z,t)\sum_{j=1}^{n}\dfrac{2b_j}{z-k_j\sqrt{1-t}}
	      \end{equation}
	      with initial value $f(z,0)=z$, is the Loewner flow
	       \begin{equation}\label{chordalflow}
	    f(z,t)=h^{-1}((1-t)^{\frac{1}{2|B|}e^{-i\psi}}h((1-t)^{-1/2}z)),
	     \end{equation}
	      where $h$ is a $(-\psi)$-spirallike function of $\mathbb{H}$ given by $(\ref{h(z)})$.
	      
	    For each $j$, with $1\le j\le n$, the trace $\hat{\gamma}_j :=\{f(k_j\sqrt{1-t} ,t)/\ t\in[0,1)\}$ is a smooth curve lying in $\mathbb{H}$ that starts perpendicularly from $k_j$, spiralling about $\beta$. 
	      \end{theo}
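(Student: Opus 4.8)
\emph{Proof sketch.}
The first assertion requires almost nothing new: applying $h^{-1}$ to the implicit relation $(\ref{chordalimplicit})$ and undoing the substitution $v=(1-t)^{-1/2}w$ (equivalently, rearranging Corollary~\ref{chordalsolution}) gives $h(f(z,t))=(1-t)^{\frac{1}{2|B|}e^{-i\psi}}h((1-t)^{-1/2}z)$, which is $(\ref{chordalflow})$; that $h$ is $(-\psi)$-spirallike with $\psi\in(-\tfrac{\pi}{2},\tfrac{\pi}{2})$ is Proposition~\ref{chordalspirallike}, and uniqueness of the solution is standard chordal Loewner theory. So the plan is to spend all the effort on the trace $\gamma_j(t):=f(k_j\sqrt{1-t},t)$. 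The identity on which everything else rests is obtained by setting $z=k_j\sqrt{1-t}$ in $(\ref{chordalflow})$, which collapses the inner argument to $k_j$:
\begin{equation*}
h(\gamma_j(t))=(1-t)^{\frac{1}{2|B|}e^{-i\psi}}\,h(k_j),\qquad t\in[0,1).
\end{equation*}
For $t\in(0,1)$ the right-hand side is a real-analytic $\C$-valued function of $t$ lying strictly inside $h(\mathbb{H})$ — its modulus $|h(k_j)|(1-t)^{\cos\psi/(2|B|)}$ is less than $|h(k_j)|$, so it runs along the part of the spiral path through the tip $h(k_j)$ that enters the slit domain, exactly as in the analysis preceding the theorem — and $h^{-1}$ is holomorphic there; hence $\gamma_j$ is a regular real-analytic curve in $\mathbb{H}$ on $(0,1)$.

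From this identity I would read off the two boundary behaviours. As $t\to1^{-}$, since $\cos\psi>0$ the right-hand side tends to $0=h(\beta)$, so $\gamma_j(t)\to\beta$ by continuity of $h^{-1}$ at $0$ (here $h'(\beta)\neq0$, since by $(\ref{h'(z)})$ none of the factors vanishes at $\beta$). Expanding $h^{-1}$ about $0$ and writing $s=-\log(1-t)$ gives $\gamma_j(t)-\beta=\frac{h(k_j)}{h'(\beta)}e^{-\frac{1}{2|B|}e^{-i\psi}s}(1+o(1))$, so $|\gamma_j(t)-\beta|\to0$ while $\arg(\gamma_j(t)-\beta)=\mathrm{const}+\frac{\sin\psi}{2|B|}\,s+o(1)$; thus $\gamma_j$ winds about $\beta$ — infinitely often as soon as $\psi\neq0$ — and converges to it, i.e.\ it spirals about $\beta$.

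The substantive step is the angle at which $\gamma_j$ leaves the real line. By $(\ref{h'(z)})$ the factor $\prod_m(z-k_m)$ gives $h'$ a simple zero at $k_j$ and nothing else there vanishes, so $h(z)-h(k_j)=\tfrac12 h''(k_j)(z-k_j)^2+O((z-k_j)^3)$ with $h''(k_j)\neq0$. The key computation — and the only part I do not expect to be pure bookkeeping — is that $(\ref{h(z)})$, $(\ref{h'(z)})$, the factorisation $(\ref{polynomial2})$ of $P$ and the value $P(k_j)=4b_j\prod_{m\neq j}(k_j-k_m)$ conspire so that all fractional factors cancel and
\begin{equation*}
\frac{h(k_j)}{h''(k_j)}=\frac{|B|}{e^{-i\psi}}\cdot\frac{P(k_j)}{\prod_{m\neq j}(k_j-k_m)}=4b_j|B|\,e^{i\psi}.
\end{equation*}
Substituting $h(\gamma_j(t))-h(k_j)=h(k_j)\bigl((1-t)^{\frac{1}{2|B|}e^{-i\psi}}-1\bigr)=-\tfrac{1}{2|B|}e^{-i\psi}h(k_j)\,t\,(1+O(t))$ into the quadratic expansion gives $(\gamma_j(t)-k_j)^2=-\tfrac{1}{|B|}e^{-i\psi}\tfrac{h(k_j)}{h''(k_j)}\,t\,(1+o(1))=-4b_j\,t\,(1+o(1))$, and since $\gamma_j(t)\in\mathbb{H}$ for $t\in(0,1)$ we must choose the root with positive imaginary part: $\gamma_j(t)-k_j=2i\sqrt{b_j t}\,(1+o(1))$. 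Hence $\arg(\gamma_j(t)-k_j)\to\pi/2$, which is the orthogonal approach; and the same expansion shows $\gamma_j(t)=k_j+\sqrt{t}\,\Phi(\sqrt{t})$ with $\Phi$ real-analytic near $0$ and $\Phi(0)=2i\sqrt{b_j}\neq0$, so after reparametrising by $\tau=\sqrt{t}$ the trace is a smooth regular curve up to and including its endpoint $k_j$.

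Alternatively, one may differentiate the displayed identity to get $\gamma_j'(t)=-\tfrac{1}{2(1-t)}\bigl(\gamma_j(t)+\sum_m\tfrac{4b_m}{\gamma_j(t)-k_m}\bigr)$ from $(\ref{h'(z)})$ and $(\ref{polynomial})$, and repeat the real- and imaginary-part sign analysis of the proofs of Propositions~\ref{spiralling} and~\ref{orthogonality} and of Theorem~\ref{Radialflow}; the perpendicularity there reduces, as in the radial case, to the image under $h$ of a small real interval at $k_j$ being tangent at $h(k_j)$ to the spiral $\widetilde{S}_j$, which keeps $\gamma_j$ on one side of that tangent for small $t$ and lets the argument of Proposition~\ref{orthogonality} run. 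Either way the single nontrivial ingredient is the algebraic collapse of $h(k_j)/h''(k_j)$ to $4b_j|B|e^{i\psi}$; I expect that cancellation to be the only real obstacle, with everything downstream of it being routine and essentially already rehearsed in Section~3.
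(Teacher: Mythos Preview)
Your argument is correct. For the explicit form of the flow and for the spiralling of $\gamma_j$ about $\beta$, you do essentially what the paper does: both rely on the identity $h(\gamma_j(t))=(1-t)^{\frac{1}{2|B|}e^{-i\psi}}h(k_j)$ and identify the trace with the preimage under $h$ of the inner arc $S_j^-$ of the logarithmic spiral through $h(k_j)$.

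The orthogonality step is where you diverge. The paper does not compute anything here; it simply invokes Schleissinger's multi-slit result (Theorem~1.2 in \cite{slei1}) to conclude that each trace leaves $\mathbb{R}$ perpendicularly. You instead give a self-contained local argument: from $h'(z)/h(z)=\frac{e^{-i\psi}}{|B|}\prod_m(z-k_m)/P(z)$ and $P(k_j)=4b_j\prod_{m\neq j}(k_j-k_m)$ one gets $h''(k_j)=h(k_j)\frac{e^{-i\psi}}{4b_j|B|}$, whence the quadratic expansion collapses to $(\gamma_j(t)-k_j)^2=-4b_j\,t\,(1+o(1))$. This is clean and elementary, and it yields the extra information $\gamma_j(t)=k_j+2i\sqrt{b_j}\,\sqrt{t}\,(1+o(1))$, including the precise leading coefficient and regularity in $\sqrt{t}$ up to the endpoint. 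The cost is the algebraic cancellation you flag, which is indeed the only nontrivial step and which you have carried out correctly. The paper's route is shorter and places the orthogonality in a broader context (Lip-$\tfrac12$ multi-slit theory), but relies on an external black box; yours is longer but keeps the argument internal to the explicit formulas of the section.
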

	      \begin{proof}
	      By the discussion above, it only remains to show the second part. At first, define the logarithmic spirals $S_j: w=h(k_j)\exp(e^{-i\psi}t)$, $t\in\mathbb{R}$, with $1\le j\le n$. We already saw that $h$ maps the upper half plane onto the complement of the spirals $S^+_j: w=h(k_j)\exp(e^{-i\psi}t), t\ge0$, joining infinity with the tip points $h(k_j)$. As $t$ ranges in $[0,1)$, then looking at $(\ref{chordalflow})$ and following the radial case in section 3, the trajectory of the tip point $f(k_j\sqrt{1-t} ,t)$ is the inverse image of the spiral $S_j^-:=S_j\setminus S_j^+$ under $h$. The othogonality  statement follows from the work of S. Schleissinger  (see Theorem 1.2, \cite{slei1}).
	      
	      \begin{figure}[ht]
	              \centering
	              \includegraphics[width=0.7
	              \linewidth]{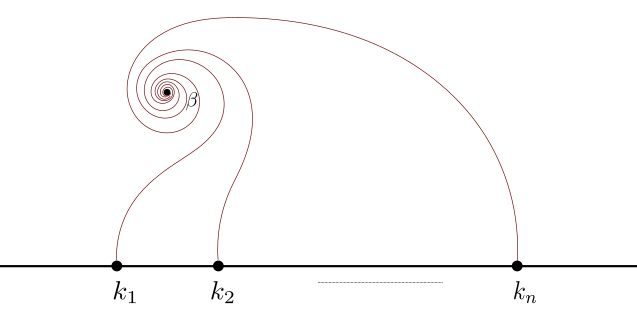}
	              \caption{Spiroids in the upper half plane, spiralling about the point $\beta$.}
	              \label{fig:SpiralsH5}
	          \end{figure}
	      
	      \end{proof}

\subsection{Non-tangential intersections.} We, now, proceed by assuming that the polynomial $(\ref{polynomial})$ has $n+1$ distinct roots in the real line. The ordering of the roots is important in the following analysis and for the sake of completeness we shall distinguish the cases below. We shall label the remaining roots of $P$ as $\rho_1$ and $\rho_2$.

\textbf{Case 1.} Assume without loss that $\rho_1<\rho_2$ and assume, also, that $\rho_1, \rho_2<k_1$, thus 
\begin{equation}\label{ordering1}
    \rho_1<\rho_2<k_1<\lambda_1<\dots<\lambda_{n-1}<k_n,
\end{equation}
thus, $P(z)=(z-\rho_1)(z-\rho_2)(z-\lambda_1)\dots(z-\lambda_{n-1})$. Therefore, by partial fraction decomposition we obtain that 
\begin{equation}\label{PFD2}
     \prod_{j=1}^{n}(z-k_j )=P(z)\left(\sum_{j=1}^{n-1}\dfrac{A_j }{z-\lambda_j}+\dfrac{B_1}{z-\rho_1}+\dfrac{B_2}{z-\rho_2}\right)
\end{equation}
where the parameters above are given as 
\begin{equation}\label{chordalparameters1}
B_1=\dfrac{\prod_{j=1}^{n}(\rho_1-k_j)}{(\rho_1-\rho_2)\prod_{j=1}^{n-1}(\rho_1-\lambda_j)}>0, \quad B_2=\dfrac{\prod_{j=1}^{n}(\rho_2-k_j)}{(\rho_2-\rho_1)\prod_{j=1}^{n-1}(\rho_2-\lambda_j)}<0    
\end{equation}
and
\begin{equation}\label{chordalparameters2}
	     A_j =\dfrac{\prod_{i=1}^{n}(\lambda_j-k_i)}{\prod_{i\neq j}(\lambda_j-\lambda_i)(\lambda_j-\rho_1)(\lambda_j-\rho_2)}<0, \quad B_1+B_2+\sum_{j=1}^{n-1}A_j =1.
	 \end{equation}
	 for $1\le j\le n-1$.
	 If we also set $b:=-B_2/B_1>0$, $a_j:=-A_j/B_1>0$, then, ODE $(\ref{chordalODE2})$ is written as 
	 $$\left(\sum_{j=1}^{n-1}\dfrac{a_j}{v-\lambda_j}-\dfrac{1}{v-\rho_1}+\dfrac{b}{v-\rho_2}\right)dv=-\dfrac{dt}{2B_1(1-t)},$$
	 in which case the solution to $(\ref{chordalODE})$ is implicitly given by the equation
	 \begin{equation}\label{chordalimplicit2}
	     h((1-t)^{-\frac{1}{2}}w(z,t))=(1-t)^{\frac{1}{2B_1}}h(z),
	 \end{equation}
	 where $h$ is given by
	 \begin{equation}\label{h(z)2}
	     h(z)=\dfrac{(z-\rho_2)^b\prod_{j=1}^{n-1}(z-\lambda_j)^{a_j}}{z-\rho_1}.
	 \end{equation}
	 
Now, in order to write the solution explicitly we have to prove that $h$ is invertible. To see this, consider the Möbius transform $T(z)=\frac{z-\rho_2}{z-\rho_1}:\mathbb{H}\rightarrow\mathbb{H}$ and note also that $-1+b+\sum a_j=-1/B_1$, by $(\ref{chordalparameters2})$. It is then straightforward to calculate 
\begin{equation}\label{h/C}
    h\circ T^{-1}(z)=C(z-1)^{\frac{1}{B_1}}z^b\prod_{j=1}^{n-1}(z-T(\lambda_j))^{a_j},
\end{equation}
but this is a Schwarz-Cristoffel transform of the upper half plane and as a result we deduce that $h$ is univalent and maps $\mathbb{H}$ onto a rotation of $\mathbb{H}$ minus $n$ straight line segments emanating from the origin. Notice that the tip points of these slits are the point $h(k_j)$, because we have by $(\ref{PFD2})$ that
$$h'(z)=-h(z)\dfrac{\prod_{j=1}^{n}(z-k_j)}{B_1 P(z)},$$
thus the derivative is zero at exactly the points $k_1,\dots,k_n$.

\textbf{Case 2.} Consider the case where $k_n<\rho_1, \rho_2$ and by relabeling assume that $\rho_2<\rho_1$. Relations $(\ref{PFD2})-(\ref{chordalimplicit2})$ still hold and $h$ is given as above.

\textbf{Case 3.} Assume, now, that at least one of the two roots $\rho_1,\rho_2$ lies in some interval $(k_{\mu},k_{\mu+1})$. Recall that $\text{sgn}P(k_j)=(-1)^{n-j}$, hence both roots have to lie in this interval. In addition, for a reason that will become clear in remark $\ref{chordalremark}$, we relabel so that $\lambda_{\mu}<\rho_1<\rho_2$. Under the preceding assumption, this case is treated the same way as the previous ones, relations $(\ref{PFD2})-(\ref{chordalimplicit2})$ still hold and $h$ is given as in case 1.

In each case, the Loewner flow is given by the formula
\begin{equation}\label{f(z,t)2}
    f(z,t)=h^{-1}((1-t)^{-\frac{1}{2B_1}}h((1-t)^{-\frac{1}{2}}z))\end{equation}
for all $z\in\mathbb{H}$ and $t\in[0,1)$. We have that $h$ maps $\mathbb{H}$ onto $\mathbb{H}\setminus\bigcup_{j=1}^{n}S_j$, where $S_j$ are straight line segments emanating from the origin with tip points $h(k_j)$. Therefore, in order to keep track of the orbits of the tip points $f(k_j\sqrt{1-t},t)$, we have that as $t$ tends to $1$, then $(1-t)^{-1/2B_1}h(k_j)$ tend to infinity, but because $h(\rho_1)=\infty$, this yields that the tip points are attracted to $\rho_1\in\mathbb{R}$.  
\begin{rmk}\label{chordalremark}
 It is clear by $(\ref{f(z,t)2})$, that for all $z\in\mathbb{H}$, $f(z\sqrt{1-t},t)\rightarrow\rho_1$ as $t\rightarrow1$. This means that the point of attraction for the tip points of the flow is one of the roots of $P$. So the question is which of the $n+1$ roots of $P$ will play the role of the attraction point. Of course, the choice of this point is not arbitrary, rather it is determined by $(\ref{chordalparameters1})$. We see that all the introduced parameters are negative except one. The point corresponding to the positive parameter will be the point of attraction.
\end{rmk}
We therefore proved the following result.
\begin{theo}
Let $k_1,\dots,k_n$ be real points in increasing order and let $b_1,\dots,b_n$ be positive numbers, such that the polynomial $P$ given by $(\ref{polynomial})$, has distinct real roots. Then, the solution to the chordal Loewner PDE in $\mathbb{H}\times[0,1)$,
	      $$\dfrac{\partial f}{\partial t}(z,t)=-f'(z,t)\sum_{j=1}^{n}\dfrac{2b_j}{z-k_j\sqrt{1-t}}$$
	      with initial value $f(z,0)=z$, is the Loewner flow
\begin{equation}\label{chordalflow2}
f(z,t)=h^{-1}((1-t)^{-\frac{1}{2B_1}}h((1-t)^{-1/2}z)),
	     \end{equation}
	      where $h$ is a Schwarz-Cristoffel transform of $\mathbb{H}$, given by $(\ref{h(z)2})$.
	      
	    For each $j$, with $1\le j\le n$, the trace $\hat{\gamma}_j :=\{f(k_j\sqrt{1-t} ,t)/\ t\in[0,1)\}$ is a smooth curve lying in $\mathbb{H}$ that starts perpendicularly from $k_j$, intersecting the real line at the root $\rho_1$ (determined by remark $\ref{chordalremark}$), with non-zero angle. 
	      \end{theo}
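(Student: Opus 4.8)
The formula $(\ref{chordalflow2})$ and its uniqueness are already in hand from the discussion preceding the statement: separating variables in $(\ref{chordalODE2})$ produces the implicit identity $(\ref{chordalimplicit2})$, the Schwarz--Christoffel representation $(\ref{h/C})$ shows that $h$ is univalent and maps $\mathbb{H}$ onto a rotation of $\mathbb{H}$ with $n$ bounded segments removed, each issuing from $0$ in a distinct direction (the $h(k_j)$ must be genuine tips) and having tip $h(k_j)$, and solving $(\ref{chordalimplicit2})$ for $w$ and inverting in the space variable yields $(\ref{chordalflow2})$; uniqueness is the general uniqueness for the chordal Loewner PDE quoted in the introduction. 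So the plan is to concentrate on the traces $\hat{\gamma}_j$. The first step is to specialise $(\ref{chordalflow2})$ at $z=k_j\sqrt{1-t}$: since $h\bigl((1-t)^{-1/2}k_j\sqrt{1-t}\bigr)=h(k_j)$, this gives the clean expression $\gamma_j(t)=h^{-1}\bigl((1-t)^{-1/(2B_1)}h(k_j)\bigr)$, so $\hat{\gamma}_j$ is exactly the $h$-preimage of the ray $\{s\,h(k_j):s\ge 1\}$ from $h(k_j)$ to $\infty$. Since the slit with tip $h(k_j)$ is the bounded segment $[0,h(k_j)]$ and the $n$ slits have distinct directions, the open ray $\{s\,h(k_j):s>1\}$ lies in the open set $h(\mathbb{H})$; hence $t\mapsto\gamma_j(t)$ is real-analytic on $(0,1)$ with values in $\mathbb{H}$, so $\hat{\gamma}_j$ is a smooth arc in $\mathbb{H}$ with $\gamma_j(0)=k_j$.

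Next I would extract an ODE for $\gamma_j$ by differentiating $h(\gamma_j(t))=(1-t)^{-1/(2B_1)}h(k_j)$ and inserting the logarithmic derivative $h'(z)=-h(z)\prod_i(z-k_i)/(B_1P(z))$ recorded just after $(\ref{h(z)2})$; this yields
\[ \gamma_j'(t)=\frac{-P(\gamma_j(t))}{2(1-t)\prod_{i=1}^{n}(\gamma_j(t)-k_i)}. \]
For the start of the curve, since $P(k_j)=4b_j\prod_{i\neq j}(k_j-k_i)$ and $\gamma_j(t)\to k_j$ as $t\to 0^+$, this gives $(\gamma_j(t)-k_j)\gamma_j'(t)\to -2b_j$, hence $\tfrac{d}{dt}(\gamma_j(t)-k_j)^2\to -4b_j$, and since $(\gamma_j(0)-k_j)^2=0$ we get $(\gamma_j(t)-k_j)^2/t\to -4b_j$. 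Writing $\gamma_j(t)-k_j=r(t)e^{i\theta(t)}$ with $\theta(t)\in(0,\pi)$ (valid because $\gamma_j(t)\in\mathbb{H}$, $k_j\in\mathbb{R}$) forces $r(t)^2/t\to 4b_j$ and $2\theta(t)\to\pi$, i.e.\ $\theta(t)\to\pi/2$: $\hat{\gamma}_j$ leaves $k_j$ orthogonally to $\mathbb{R}$. (This orthogonality may alternatively be quoted from Theorem~1.2 in \cite{slei1}, as in the previous theorems.)

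For the terminal behaviour, as $t\to 1$ one has $h(\gamma_j(t))=(1-t)^{-1/(2B_1)}h(k_j)\to\infty$; since the only boundary point of $\mathbb{H}$ at which $h$ has a pole is $\rho_1$ and $h(\gamma_j(t))$ escapes to $\infty$ along a direction interior to $h(\mathbb{H})$, we get $\gamma_j(t)\to\rho_1$, in agreement with Remark~\ref{chordalremark}. Writing $h(z)=\tfrac{K}{z-\rho_1}\bigl(1+o(1)\bigr)$ near $\rho_1$, with $K:=\lim_{z\to\rho_1}(z-\rho_1)h(z)$, the relation above gives $\gamma_j(t)-\rho_1=\tfrac{K}{h(k_j)}(1-t)^{1/(2B_1)}\bigl(1+o(1)\bigr)$, so $\arg(\gamma_j(t)-\rho_1)\to\arg K-\arg h(k_j)$, a finite constant. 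The remaining task is to evaluate this constant and check it is strictly between $0$ and $\pi$. Using the branch of $(z-c)^{\alpha}$ with $\arg(z-c)\in(0,\pi)$ on $\mathbb{H}$, a direct computation from $(\ref{h(z)2})$ gives, in Case~1, $\arg K-\arg h(k_j)=\pi\bigl(b+\sum_{i=1}^{j-1}a_i\bigr)$, which lies in $(0,\pi)$ because all $a_i,b>0$ and $b+\sum_{i=1}^{n-1}a_i=1-\tfrac{1}{B_1}\in(0,1)$ — here $B_1>1$ since $B_1=1-B_2-\sum_j A_j$ with $B_2,A_j<0$ by $(\ref{chordalparameters1})$--$(\ref{chordalparameters2})$; Cases~2 and~3 are handled identically (tracking the signs of $\rho_1-\rho_2,\ \rho_1-\lambda_i,\ k_j-\rho_2,\ k_j-\lambda_i$) and likewise produce a limiting angle strictly inside $(0,\pi)$. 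Hence $\hat{\gamma}_j$ meets $\mathbb{R}$ at $\rho_1$ with non-zero angle.

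I expect the fiddly point to be this last step: carefully tracking the branches of the multivalued map $h$ across the three orderings of the roots in order to pin down $\arg K$ and $\arg h(k_j)$, and then confirming that the resulting landing angle is genuinely interior to $(0,\pi)$ in all cases. Everything else is either a short computation built on the ODE for $\gamma_j$ or a direct appeal to results already established or cited above.
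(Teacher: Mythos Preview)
Your argument is correct and arrives at the same conclusion as the paper by a genuinely different route for the key step, the non-tangential landing at $\rho_1$. The paper proceeds geometrically: fixing two discs of radius $\epsilon$ tangent to $\mathbb{R}$ at $\rho_1$ from either side of the prospective landing direction, it computes $\arg\bigl(h(\zeta)/C\bigr)$ along their boundary arcs, shows that the images of these arcs are curves asymptotic to the half-line $\{x e^{i\theta_j\pi}:x\ge 0\}$ and bounding regions disjoint from the orbit rays, and hence concludes that each trace approaches $\rho_1$ while avoiding both discs, giving landing angle exactly $(1-\theta_j)\pi$ where $\theta_j\pi=\arg(h(k_j)/C)$ is the slit angle. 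You instead exploit the simple pole of $h$ at $\rho_1$: writing $h(z)=K/(z-\rho_1)\bigl(1+o(1)\bigr)$ and inverting yields the asymptotic $\gamma_j(t)-\rho_1\sim (K/h(k_j))(1-t)^{1/(2B_1)}$, from which the landing angle $\arg K-\arg h(k_j)$ is read off directly and checked to lie in $(0,\pi)$ by tracking branches and using $b+\sum a_i=1-1/B_1\in(0,1)$. Your approach is shorter and produces an explicit closed formula for the angle (indeed the same one, since $\arg K - \arg h(k_j)=\pi-\theta_j\pi$ once one accounts for the constant $C$), at the cost of the branch bookkeeping you flag as the fiddly point; the paper's barrier method sidesteps that bookkeeping entirely. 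For the orthogonal start at $k_j$ the paper simply cites Schleissinger, whereas you give a self-contained argument via the ODE $\gamma_j'=-P(\gamma_j)/\bigl(2(1-t)\prod_i(\gamma_j-k_i)\bigr)$ and the asymptotic $(\gamma_j(t)-k_j)^2\sim -4b_j t$; this is a clean addition. One small point worth making explicit in your write-up: the claim that the open ray $\{s\,h(k_j):s>1\}$ lies in $h(\mathbb{H})$ uses that the $n$ slit directions $\arg h(k_j)$ are pairwise distinct, which follows from your own computation $\arg h(k_j)=\pi\sum_{i\ge j}a_i$ (strictly monotone in $j$) in Case~1 and analogously in the other cases.
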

\begin{proof}
 It remains to prove that the traces intersect the real line non-tangentially. For this, consider without loss of generality that $\lambda_{\mu}<\rho_1<\rho_2<k_{\mu+1}$, for some $\mu$, as in the third case. By $(\ref{h/C})$, consider the function $\frac{h(z)}{C}$, which maps $\mathbb{H}$ onto $\mathbb{H}\setminus\bigcup_{j=1}^{n}[0,h(k_j)/C]$ as in the figure below (it would be enough to assume that C=1).
 
 \begin{figure}[ht]
	              \centering
	              \includegraphics[width=1
	              \linewidth]{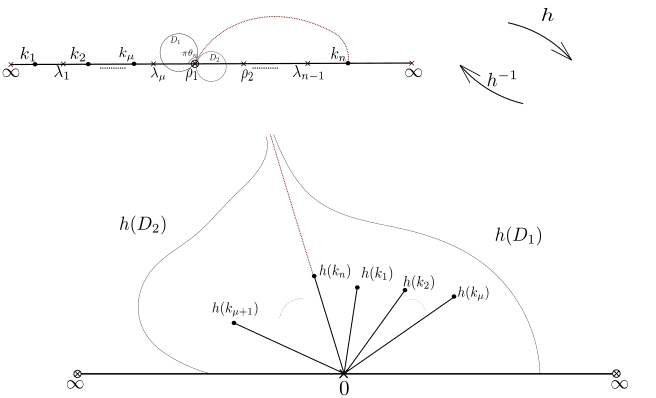}
	              \caption{The image of $h$ for $\lambda_{\mu}<\rho_1<\rho_2<k_{\mu+1}$.}
	              \label{fig:tangentialintersection1}
	          \end{figure}
 
  Let $\theta_j\pi$, for $1\le j\le n$, be the angles formed by the line segments $[0,h(k_j)]$ with the real line. We will prove that each trace $\hat{\gamma}_j$ intersects $\mathbb{R}$ under angle $(1-\theta_j)\pi$, using the following geometric argument. Consider discs $D_1, D_2$ of radius $\epsilon$, centered at $\rho_1+i\epsilon e^{i(1-\theta_j)\pi}$ and $\rho_1-i\epsilon e^{i(1-\theta_j)\pi}$ respectively, and we look at their images under $\frac{h}{C}$. For $\zeta_1^j(\theta)=\rho_1+i\epsilon e^{i(1-\theta_j)\pi}+\epsilon e^{i\theta}\in\partial D_1\cap\mathbb{H}$, we have
  \begin{align*}
      F_1^j(\theta):=&\text{arg}\left(\dfrac{h(\zeta_1^j(\theta))}{C}\right)=(1-b-\sum_{j=\mu+1}^{n-1}a_j)\pi\\
      +&b\text{Arg}(\rho_1-\rho_2+\epsilon(\cos(\theta)-\sin(\theta_j\pi))+i\epsilon(\sin(\theta) -\cos(\theta_j\pi)))\\
      +&\sum_{j=1}^{n-1}a_j\text{Arg}(\rho_1-\lambda_j+\epsilon(\cos(\theta)-\sin(\theta_j\pi))+i\epsilon(\sin(\theta) -\cos(\theta_j\pi)))\\
      -&\text{Arg}(\cos(\theta)-\sin(\theta_j\pi)+i(\sin(\theta) -\cos(\theta_j\pi)))\\
      =&(1-b-\sum_{j=\mu+
      1}^{n-1}a_j)\pi+b\text{Arccot}\left(\dfrac{\rho_1-\rho_2+\epsilon(\cos(\theta)-\sin(\theta_j\pi))}{\epsilon(\sin(\theta) -\cos(\theta_j\pi))}\right)\\
      +&\sum_{j=1}^{n-1}a_j\text{Arccot}\left(\dfrac{\rho_1-\rho_2+\epsilon(\cos(\theta)-\sin(\theta_j\pi))}{\epsilon(\sin(\theta) -\cos(\theta_j\pi))}\right)-\dfrac{\pi}{2}-\dfrac{\theta-\frac{\pi}{2}+(1-\theta_j)\pi}{2}.
      \end{align*}
      Notice that the left endpoint of $\partial D_1\cap\mathbb{H}$ is mapped onto the real line, while for the right endpoint we have that $F_1^j(-\frac{\pi}{2}+(1-\theta_j)\pi)=\theta_j\pi$, the angle of the $j$-th line segment. Differentiating with respect to $\theta$, we also have that
      \begin{align*}
      (F_1^j)'(\theta)=&-\dfrac{1}{2}+b\epsilon\dfrac{\epsilon(1+\sin((\theta_j-\theta)\pi))+\cos(\theta)(\rho_1-\rho_2)}{\epsilon^2(\sin(\theta)+\cos(\theta_j\pi))^2+(\rho_1-\rho_2+\epsilon(\cos(\theta)-\sin(\theta_j\pi)))^2}\\
      +&\sum_{j=1}^{n-1}a_j\epsilon\dfrac{\epsilon(1+\sin((\theta_j-\theta)\pi))+\cos(\theta)(\rho_1-\lambda_j)}{\epsilon^2(\sin(\theta)+\cos(\theta_j\pi))^2+(\rho_1-\lambda_j+\epsilon(\cos(\theta)-\sin(\theta_j\pi)))^2},
      \end{align*}
      which is negative for sufficiently small $\epsilon$. Therefore, $h(\partial D_1\cap\mathbb{H})$ is a curve starting from the positive half-line, with decreasing argument and extending to infinity asymptotically with respect to $\{xe^{i\theta_j}/ x\ge0\}$, as seen in figure $\ref{fig:tangentialintersection1}$. We do the same for the disc $D_2$. Let $\zeta_2^j(\theta):=\rho_1-i\epsilon e^{i(1-\theta_j)\pi}+\epsilon e^{i\theta}\in\partial D_2\cap\mathbb{H}$ and $F_2^j(\theta):=\text{arg}(\frac{h(\zeta_2^j(\theta))}{C})$. We then have that $F_2^j(\dfrac{\pi}{2}+(1-\theta_j)\pi)=\theta_j\pi$ and as before, its derivative is negative in the interval $[(F_2^j)^{-1}(\pi),\frac{\pi}{2}+(1-\theta_j)\pi]$. Thus, the image of $\partial D_2\cap\mathbb{H}$ is a curve emanating from the negative half-line with increasing argument, with the line $\{xe^{i\theta_j}/ x\ge0\}$ being its asymptote as it extends to infinity.
      
      Notice, finally, that $D_j\cap\mathbb{H}$ are mapped to the subdomains determined by the two curves above, not containing the line segments $[0,h(k_i)]$. This means that the orbit of $k_j$, approaches $\rho_1$ without intersecting the discs $D_j$, hence it intersects the real line with angle $(1-\theta_j)\pi$, which completes the proof.
      \begin{figure}[ht]
	              \centering
	              \includegraphics[width=0.9
	              \linewidth]{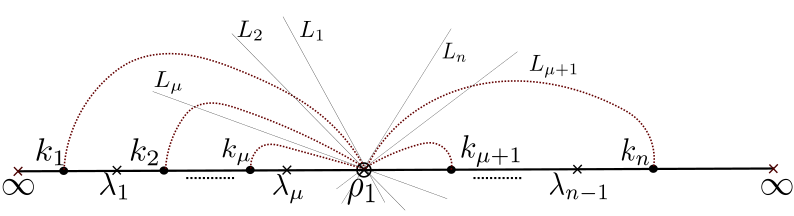}
	              \caption{The orbits of the driving functions $k_j\sqrt{1-t}$.}
	              \label{fig:tangentialintersection1.5}
	          \end{figure}
      \end{proof}

      \subsection{Tangential intersections.} In the final case of our study, we consider the case where $P$ has a multiple root, say $\rho_0\in\mathbb{R}$. It is possible to have that $\rho_0$ coincides with some $\lambda_{\mu}$ and  this implies that it is actually a triple root. To begin with, we consider the case where $\rho_0$ is a root of order $2$. As in the previous section, we distinguish the cases where $\rho_0<k_1$, or $\rho_0\in(k_{\mu},\lambda_{\mu})$, or $\rho_0\in(\lambda_{\mu},k_{\mu+1})$, or finally $\rho_0>k_n$. Each case is treated the same way.

\textbf{Case 1.} Assume that $P(z)=(z-\rho_0)^2\prod_{j=1}^{n-1}(z-\lambda_j)$, with $\rho_0<k_1$. Now, by partial fraction decomposition we have that 
\begin{equation}
    \label{PFD3}
     \prod_{j=1}^{n}(z-k_j )=P(z)\left(\sum_{j=1}^{n-1}\dfrac{A_j }{z-\lambda_j}+\dfrac{B_1}{z-\rho_0}+\dfrac{B_2z}{(z-\rho_0)^2}\right)
\end{equation}
from which we deduce that the parameters are given by 
\begin{equation}\label{PFDparameters1}
     A_j =\dfrac{\prod_{i=1}^{n}(\lambda_j-k_i)}{\prod_{i\neq j}(\lambda_j-\lambda_i)(\lambda_j-\rho_0)^2}<0, \quad  B_2\rho_0=\dfrac{\prod_{j=1}^{n}(\rho_0-k_j)}{\prod_{j=1}^{n-1}(\rho_0-\lambda_j)}<0
\end{equation}
for $1\le j\le n-1$, and furthermore $B_1+B_2+\sum_{j=1}^{n-1}A_j =1$.
Hence, ODE $(\ref{chordalODE2})$ is written as 
$$\left(\sum_{j=1}^{n-1}\dfrac{A_j}{v-\lambda_j}+\dfrac{B_1+B_2}{v-\rho_0}+\dfrac{B_2\rho_0}{(v-\rho_0)^2}\right)dv=\dfrac{dt}{2(1-t)}$$
and then the solution is implicitly defined by the equation
\begin{equation}\label{chordalimplicit3}
	     h((1-t)^{-\frac{1}{2}}w(z,t))=\dfrac{1}{2}\log(1-t)+h(z),
	 \end{equation}
 where $h$ is given by
	 \begin{equation}\label{h(z)3}
	     h(z)=\sum_{j=1}^{n-1}A_j\log(z-\lambda_j)+(1-\sum_{j=1}^{n-1}A_j)\log(z-\rho_0)-\dfrac{B_2\rho_0}{z-\rho_0}.
	 \end{equation}
	 
	 \textbf{Case 2.} For the other cases, we have that relations $(\ref{PFD3})-(\ref{h(z)3})$ still hold if we assume that $\rho_0\in(k_{\mu},k_{\mu+1})$ or $k_n<\rho_0$, with only difference in $(\ref{PFDparameters1})$, that $B_2\rho_0>0$ when $\rho_0\in(k_{\mu},\lambda_{\mu})$ or $k_n<\rho_0$. 
	 \begin{proposition}
	     The function $h\in H(\mathbb{H})$ given by $(\ref{h(z)3})$ is univalent and maps the upper half plane onto a half plane determined by a translation of $\mathbb{R}$, minus $n$ half-lines parallel to $\mathbb{R}$.
	     \end{proposition}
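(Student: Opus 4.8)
The plan is the classical boundary–correspondence argument for an explicitly given Schwarz–Christoffel–type map, here for a polygon with vertices at infinity. First I would differentiate $(\ref{h(z)3})$ and feed in the partial–fraction identity $(\ref{PFD3})$ together with $B_1+B_2+\sum_jA_j=1$ to get the clean rational form
$$h'(z)=\frac{\prod_{j=1}^{n}(z-k_j)}{(z-\rho_0)^2\prod_{j=1}^{n-1}(z-\lambda_j)}.$$
Its zeros ($k_1,\dots,k_n$) and poles ($\rho_0$, double, and $\lambda_1,\dots,\lambda_{n-1}$) all lie on $\mathbb{R}$, so $h'$ is finite and nonvanishing on $\mathbb{H}$; hence $h$ is locally conformal there, extends holomorphically across $\mathbb{R}\setminus\{\lambda_1,\dots,\lambda_{n-1},\rho_0\}$ and continuously (as a map into $\widehat{\mathbb{C}}$) up to $\lambda_j\mapsto\infty$, $\rho_0\mapsto\infty$, with $h(z)=\log z+C+O(1/z)$ as $z\to\infty$. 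Recall also that all $A_j<0$ and $1-\sum_jA_j>0$.

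Next I would trace the boundary values along $\mathbb{R}$. For real $x$ the term $-B_2\rho_0/(x-\rho_0)$ is real, while $\operatorname{Im}\log(x-\lambda_j)$ and $\operatorname{Im}\log(x-\rho_0)$ take only the values $0$ and $\pi$; consequently $\operatorname{Im} h(x)$ is a step function of $x$ — equal to $0$ to the right of all special points, dropping by $|A_j|\pi$ as $x$ decreases past $\lambda_j$, jumping by $(1-\sum_jA_j)\pi$ as $x$ decreases past $\rho_0$, and equal to $\pi$ to the left of everything — so its values are $\pi$ together with the $n$ numbers $\pi\sum_{i\ge k}A_i$ ($k=1,\dots,n$), which (all $A_j<0$) are pairwise distinct and $\le 0<\pi$. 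On each open interval between consecutive elements of $\{\rho_0,\lambda_1,\dots,\lambda_{n-1},k_1,\dots,k_n\}$ (and $\infty$), $\operatorname{Im} h$ is constant while $\tfrac{d}{dx}\operatorname{Re} h(x)=h'(x)$ is real of constant sign, so $h$ carries the interval homeomorphically onto a horizontal ray — except the interval left of $\rho_0$, which covers the whole line $\{\operatorname{Im} w=\pi\}$ once. Matching endpoints against the local behaviour ($\operatorname{Re} h\to+\infty$ at each $\lambda_j$ since $A_j<0$; the pole term $-B_2\rho_0/(z-\rho_0)$ dominating at $\rho_0$, so that a small half–disc there is sent over a large half–plane; the half–strip of $\log z$ at $\infty$; $h'(k_j)=0$ giving a slit–type turn at the finite point $h(k_j)$) shows that the two intervals flanking each $k_j$ land on the same horizontal half–line with common tip $h(k_j)$. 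Altogether $h$ maps $\partial\mathbb{H}\cup\{\infty\}$ onto the boundary of
$$\Omega=\{\operatorname{Im} w<\pi\}\setminus\bigcup_{k=1}^{n}\Bigl\{w:\operatorname{Im} w=\pi\sum_{i\ge k}A_i,\ \operatorname{Re} w\ge\operatorname{Re} h(k_k)\Bigr\},$$
a half–plane whose edge is a translate of $\mathbb{R}$, with $n$ pairwise disjoint half–lines parallel to $\mathbb{R}$ deleted, each slit edge covered twice with slit–turning. (For the other positions of $\rho_0$, and according to the sign of $B_2\rho_0$, the ambient half–plane and the orientation of the half–lines change, but the combinatorics is identical.)

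I would then conclude by a degree argument. The boundary analysis shows $h(\mathbb{H})\subseteq\Omega$ and that $h$ is \emph{proper} onto $\Omega$: if $z_m$ leaves every compact subset of $\mathbb{H}$ then a subsequence tends to a point of $\overline{\mathbb{R}}$, and if that point is one of $\lambda_j,\rho_0,\infty$ then $h(z_m)\to\infty$, otherwise $h(z_m)$ tends to a boundary point of $\Omega$; in all cases $h(z_m)$ leaves every compact subset of $\Omega$. A proper locally biholomorphic map is a finite–sheeted covering, and a covering of the simply connected domain $\Omega$ by the connected domain $\mathbb{H}$ is one–sheeted; hence $h:\mathbb{H}\to\Omega$ is a conformal bijection. (Equivalently one may count preimages by the argument principle over a truncated, indented copy of $\partial\mathbb{H}$, the displayed $h'$ being exactly the Schwarz–Christoffel integrand for the unbounded polygon $\Omega$, with prevertices $k_j,\lambda_j,\rho_0,\infty$ of interior angles $2\pi,\,0,\,-\pi,\,0$.)

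The step I expect to be the real work is the second paragraph: organizing the boundary trace so that the arcs visibly glue into a single, non–self–intersecting boundary curve of $\Omega$. The delicate point is the local picture at $\rho_0$, where one must verify that the simple pole $-B_2\rho_0/(z-\rho_0)$ and the logarithm $(1-\sum_jA_j)\log(z-\rho_0)$ together send the half–disc around $\rho_0$ to a region bridging the half–line at height $\pi\sum_iA_i$ to the full line at height $\pi$ without overlap — together with the bookkeeping pairing the $n$ tips $h(k_k)$ and the various escapes to infinity (at the $\lambda_j$, at $\rho_0$, and at $\infty$) with the claimed shape of $\Omega$, and the observation that distinctness of the heights $\pi\sum_{i\ge k}A_i$ keeps the slits disjoint. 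Once that is in place, the properness and covering–space conclusion is routine.
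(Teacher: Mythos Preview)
Your argument is correct, but the paper proves univalence by a much shorter route that avoids the whole boundary–correspondence/properness machinery. It applies the M\"obius transform $T(z)=\frac{1}{\rho_0-z}$ (sending $\rho_0$ to $\infty$), computes
\[
(h\circ T^{-1})'(z)=\sum_{j=1}^{n-1}\frac{A_j}{z-T(\lambda_j)}-\frac{1}{z}+B_2\rho_0,
\]
and observes that since $A_j<0$ and $T(\lambda_j)\in\mathbb{R}$, each term has positive imaginary part on $\mathbb{H}$; Noshiro--Warschawski on the convex domain $\mathbb{H}$ then gives univalence in one line. The image is identified afterwards by the same real-line computation you carry out. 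Your approach trades this trick for more work: you trace $h(\mathbb{R})$ first, then run a properness/covering argument, which has the advantage of producing the image domain $\Omega$ simultaneously with injectivity, and of fitting the map transparently into the Schwarz--Christoffel framework (your displayed $h'$ is exactly the SC integrand). The cost is the bookkeeping you yourself flag---the local picture at $\rho_0$, the pairing of rays at each $k_j$, and the implicit step that $h(\mathbb{H})$ is a single connected component of $\mathbb{C}\setminus\partial\Omega$ identified by one interior value. None of this is wrong, but the paper's M\"obius-plus-$\operatorname{Im}h'>0$ device sidesteps all of it.
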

	     \begin{proof}
	         Assume that $\rho_0<k_1$. Applying the Möbius transform $T(z)=\frac{1}{\rho_0-z}:\mathbb{H}\rightarrow\mathbb{H}$ we have that
	          $$h\circ T^{-1}(z)=\sum_{j=1}^{n-1}A_j\log\left((\rho_0-\lambda_j)\dfrac{z-T(\lambda_j)}{z}\right)+(1-\sum_{j=1}^{n-1}A_j)\log\left(\dfrac{-1}{z}\right)+B_2\rho_0 z$$
	          and a differentiation shows that
	          $$(h\circ T^{-1})'(z)=\sum_{j=1}^{n-1}A_j\dfrac{1}{z-T(\lambda_j)}-\dfrac{1}{z}+B_2\rho_0 .$$
	          This implies that $\text{Im}((h\circ T^{-1})'(z))>0$, for all $z\in\mathbb{H}$, but because $\mathbb{H}$ is a convex domain, we deduce that $h$ is univalent. We argue similarly for the other cases, by applying the same Möbius transform if $k_n<\rho_0$, or $T_2(z)=\frac{z-\lambda_{\mu}}{z-\rho_0}$ if $k_{\mu}<\rho_0<\lambda_{\mu}$, or $-T_2(z)$ if $\lambda_{\mu}<\rho_0<k_{\mu+1}$. 
	          
	          For the second part we only have to find the image of the real line under $h$. We have that
	          \begin{eqnarray*}
	              h(x)&=&\sum_{j=1}^{n-1}A_j\log|x-\lambda_j|+(1-\sum_{j=1}^{n-1}A_j)\log|x-\rho_0|-\dfrac{B_2\rho_0}{x-\rho_0}\\
	              &+&i[\sum_{j=1}^{n-1}A_j\text{Arg}(x-\lambda_j)+(1-\sum_{j=1}^{n-1}A_j)\text{Arg}(x-\rho_0)]
	          \end{eqnarray*}
	          for all $x\in\mathbb{R}$, and the result follows.
	          
	           \begin{figure}[ht]
	              \centering
	              \includegraphics[width=1
	              \linewidth]{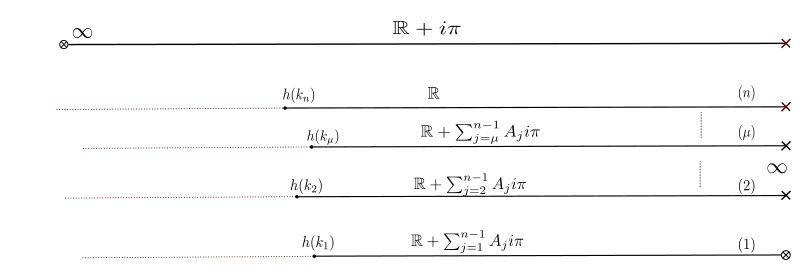}
	              \caption{The image of $h$ for $\rho_0<k_1$.}
	              \label{fig:half lines 2}
	          \end{figure}
	          \begin{figure}[ht]
	              \centering
	              \includegraphics[width=1
	              \linewidth]{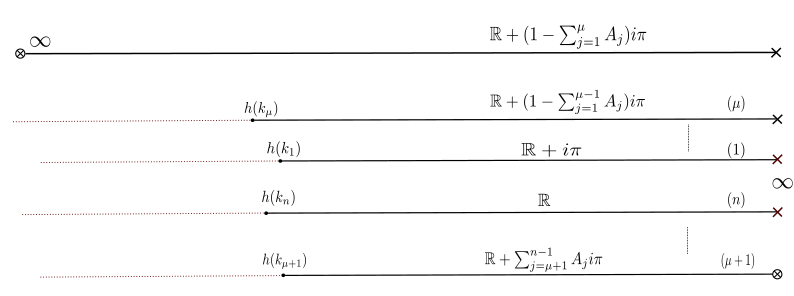}
	              \caption{The image of $h$ for $\lambda_{\mu}<\rho_0<k_{\mu+1}$.}
	              \label{fig:half lines 3}
	          \end{figure}
	     \end{proof}
	     
	     \begin{corollary}\label{chordalsolution3}
The solution to Loewner's ODE $(\ref{chordalODE})$, under the assumptions of this section, is given by the formula
	  \begin{equation}\label{chordalexplicit3}
	      w(z,t)=(1-t)^{1/2}h^{-1}(-\dfrac{1}{2}\log(1-t)+h(z)),
	  \end{equation}
	 for all $z\in\mathbb{H}$ and $t\in[0,1)$, where $h$ is given by $(\ref{h(z)3})$.
\end{corollary}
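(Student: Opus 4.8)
The plan is to imitate the proof of Corollary \ref{chordalsolution}: reduce the chordal equation to a separable ODE, integrate it into an implicit functional equation for the flow, and then invert $h$, whose univalence is exactly the content of the preceding proposition.

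First I would record the reduction already carried out in the text. The substitution $v=(1-t)^{-1/2}w$ sends $(\ref{chordalODE})$ to $(\ref{chordalODE2})$, and the right-hand side of $(\ref{chordalODE2})$ is precisely $\dfrac{P(v)}{2(1-t)\prod_{j=1}^{n}(v-k_j)}$ with $P$ the polynomial $(\ref{polynomial})$ — this is why that polynomial is the natural one to introduce. Separating variables gives $\dfrac{\prod_{j=1}^{n}(v-k_j)}{P(v)}\,dv=\dfrac{dt}{2(1-t)}$, and the partial fraction decomposition $(\ref{PFD3})$, together with the residue identity $B_1+B_2+\sum_{j=1}^{n-1}A_j=1$, identifies the left-hand side with $h'(v)\,dv$ for the $h$ of $(\ref{h(z)3})$ (using $1-\sum A_j=B_1+B_2$). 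Integrating from $0$ to $t$, with $v(z,0)=z$ and $\int_0^t\frac{ds}{2(1-s)}=-\tfrac12\log(1-t)$, one obtains
\[ h\bigl((1-t)^{-1/2}w(z,t)\bigr)=h(z)-\tfrac12\log(1-t), \]
which is the implicit relation $(\ref{chordalimplicit3})$, the sign of the logarithmic term being the one appearing in $(\ref{chordalexplicit3})$. The same computation runs verbatim in the triple-root case, with the corresponding form of $h$ in place of $(\ref{h(z)3})$.

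Since $h$ is univalent on $\mathbb{H}$ by the preceding proposition, $h^{-1}$ is a well-defined holomorphic map on $h(\mathbb{H})$; solving the displayed equation for $v$ and then for $w=(1-t)^{1/2}v$ produces $(\ref{chordalexplicit3})$. I expect the only genuinely non-routine point to be verifying that $h(z)-\tfrac12\log(1-t)$ remains inside $h(\mathbb{H})$, so that the inversion is legitimate. As $t$ runs over $[0,1)$ the quantity $-\tfrac12\log(1-t)$ sweeps out $[0,\infty)$, so the orbit $t\mapsto h(z)-\tfrac12\log(1-t)$ is a horizontal ray pointing toward $+\infty$; using the description of $h(\mathbb{H})$ from the preceding proposition — a half-plane bounded by a horizontal line (the whole plane, in the triple-root case) with $n$ half-lines parallel to $\mathbb{R}$ removed — one checks that a ray issuing from a point of the image cannot leave the half-plane, and, by the monotonicity of $\operatorname{Re}h$ along the boundary intervals flanking each slit (exactly the monotonicity argument used in the spiral subsection), cannot run into a slit. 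Alternatively one may simply invoke existence and uniqueness for the chordal Loewner ODE: $(\ref{chordalexplicit3})$ defines a holomorphic solution with the correct initial value wherever its right-hand side is defined, and uniqueness identifies it with $w$. Either way the corollary follows; everything else is the bookkeeping already done before $(\ref{chordalimplicit3})$ and in the preceding proposition.
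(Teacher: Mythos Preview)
Your proposal is correct and follows the same approach as the paper, which states the corollary without proof as an immediate consequence of the implicit relation $(\ref{chordalimplicit3})$ and the univalence of $h$ established in the preceding proposition (exactly paralleling Corollary~\ref{chordalsolution}). Your extra care in arguing that the horizontal ray $h(z)-\tfrac12\log(1-t)$ stays inside $h(\mathbb{H})$ goes beyond what the paper writes out, but is in the same spirit and is a reasonable point to address.
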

	   It is clear by figures $\ref{fig:half lines 2}$ and $\ref{fig:half lines 3}$ that the point at infinity has $n+1$ preimages in $\overline{\mathbb{R}}$, or in the language of Caratheodory's theory, it has $n+1$ prime ends. Therefore, if we consider the orbits $x+h(k_j)$ as $x\rightarrow-\infty$, then their preimages can accumulate to any of the points $\rho_0,\lambda_1,\dots,\lambda_{n-1}$ or $\infty$.
	   
	   Consider the case where $k_{\mu}<\rho_0<\lambda_{\mu}$. 
	   To see that $\rho_0$ is the accumulation point, take $C:=\overline{D(\rho_0,\epsilon)\cap\mathbb{H}}$ to be the closure of a half-disc with center $\rho_0$, so that it does not contain any point $k_j,\lambda_j$. Then, $h(\partial D(\rho_0,\epsilon))$ is a Jordan arc with endpoints $h(\rho_0-\epsilon)\in\mathbb{R}+(1-\sum_{j=1}^{\mu-1}A_j)i\pi$ and $h(\rho_0+\epsilon)\in\mathbb{R}+\sum_{j=\mu}^{n-1}A_ji\pi$, as in the figure below.\begin{figure}[ht]
	              \centering
	              \includegraphics[width=1
	              \linewidth]{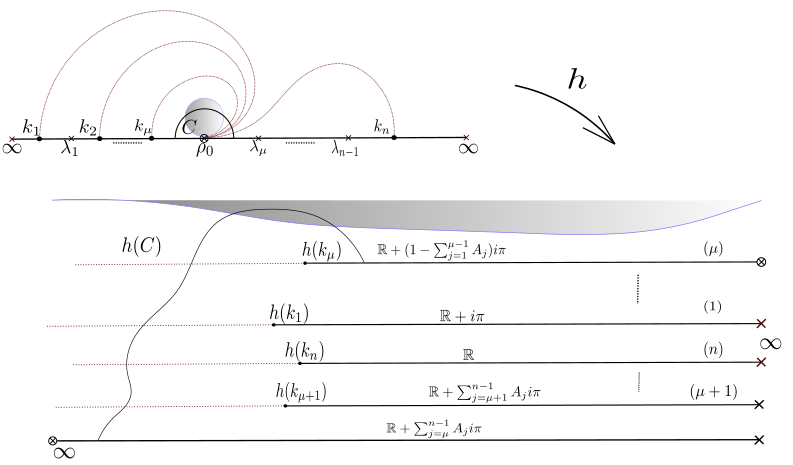}
	              \caption{The image of $h$ for $k_{\mu}<\rho_0<\lambda_{\mu}$.}
	              \label{fig:half lines 1 tangential}
	          \end{figure}	 
	          This implies that $h(C)$ is one of the two domains determined by this curve, but since $C$ does not contain any of the $\lambda_j$'s, $h(C)$ is the domain not containing the $n$ half-lines.

We secondly prove that the traces intersect the real line at $\rho_0$ tangentially, using a simple geometric argument. Consider the disc $D:=D(\rho_0+\delta i,\delta)$, tangent to $\mathbb{R}$ at $\rho_0$. Then, a direct computation gives us that
$$\text{Im}(h(\zeta))=\sum_{j=1}^{n-1}A_j\text{Arg}(\zeta-\lambda_j)+(1-\sum_{j=1}^{n-1}A_j)\text{Arg}(\zeta-\rho_0)+\dfrac{B_2\rho_0}{2\delta},$$
for all $\zeta\in\partial D(\rho_0+\delta i,\delta)$. Hence, for $\delta$ small enough, $\text{Im}(h(\zeta))>(1-\sum_{j=1}^{\mu-1}A_j)\pi$, which means that the image of the disc lies above the line $\mathbb{R}+(1-\sum_{j=1}^{\mu-1}A_j)i\pi$ as in figure $\ref{fig:half lines 1 tangential}$. Therefore, the traces approach $\rho_0$ without intersecting the disc, thus tangentially.

To conclude, we observe that as $t$ tends to $1$, the traces approach $\rho_0$ from only one direction, thus, $\text{Arg}(\gamma_j(t)-\rho_0)\rightarrow0$ for all $1\le j\le n$, or $\text{Arg}(\gamma_j(t)-\rho_0)\rightarrow\pi$ for all $1\le j\le n$. Indeed, for any $\epsilon$ small enough, the image of the part of $\partial C$ from its right endpoint to the point of $D$ it intersects first, crosses all orbits $\{x+h(k_j)/x\ge0\}$, as in the figure above. Note that the other part of $C$ cannot intersect some trace for those $\epsilon$'s, as this would violate the No Koebe Arcs Theorem (see \cite{pom}).

\subsection{Orthogonal intersections.}
	   Consider, now, the case where $\rho_0=\lambda_{\mu}$, for some $\mu$, hence it is a triple root of $P$. Partial fraction decomposition is written as 
	   \begin{equation}
    \label{PFD4}
     \prod_{j=1}^{n}(z-k_j )=P(z)\left(\sum_{j\neq\mu}\dfrac{A_j }{z-\lambda_j}+\dfrac{B_1}{z-\rho_0}+\dfrac{B_2z}{(z-\rho_0)^2}+\dfrac{B_3z^2}{(z-\rho_0)^3}\right)
\end{equation}
and hence we get that
\begin{equation}\label{PFDparameters2}
     A_j =\dfrac{\prod_{i=1}^{n}(\lambda_j-k_i)}{\prod_{i\neq j,\mu}(\lambda_j-\lambda_i)(\lambda_j-\rho_0)^3}<0, \quad  B_3\rho_0^2=\dfrac{\prod_{j=1}^{n}(\rho_0-k_j)}{\prod_{j\neq\mu}(\rho_0-\lambda_j)}<0
\end{equation}
	   and that $B_1+B_2+B_3+\sum_{j\neq\mu}A_j=1$. Now, ODE $(\ref{chordalODE2})$ is equivalent to
$$\left(\sum_{j\neq\mu}\dfrac{A_j}{v-\lambda_j}+\dfrac{B_1+B_2+B_3}{v-\rho_0}+\dfrac{(B_2+2B_3)\rho_0}{(v-\rho_0)^2}+\dfrac{B_3\rho_0^2}{(v-\rho_0)^3}\right)dv=\dfrac{dt}{2(1-t)}$$
and then the solution is implicitly given by the equation
\begin{equation}\label{chordalimplicit4}
	     h((1-t)^{-\frac{1}{2}}w(z,t))=-\dfrac{1}{2}\log(1-t)+h(z),
	 \end{equation}
 where $h$ is given by (set $C=(B_2+2B_3)\rho_0$ as it will not play any important role)
	 \begin{equation}\label{h(z)4}
	     h(z)=\sum_{j\neq\mu}A_j\log(z-\lambda_j)+(1-\sum_{j\neq\mu}A_j)\log(z-\rho_0)-\dfrac{C}{z-\rho_0}-\dfrac{B_3\rho_0^2}{(z-\rho_0)^2}.
	 \end{equation}

 \begin{proposition}
	     The function $h\in H(\mathbb{H})$ given by $(\ref{h(z)4})$ is univalent and maps the upper half plane onto the complement of $n$ half-lines parallel to $\mathbb{R}$.
	     \end{proposition}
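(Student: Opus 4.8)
The plan is to follow the pattern of the two preceding propositions. First we establish univalence of $h$ by transporting the problem to a convex domain via a Möbius transformation that sends the triple root $\rho_0=\lambda_\mu$ to $\infty$, and then we read off the image from the boundary behaviour of $h$ on $\mathbb{R}$.

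\textbf{Univalence.} Let $T(z)=\dfrac{1}{\rho_0-z}$, which maps $\mathbb{H}$ conformally onto $\mathbb{H}$ and carries $\rho_0$ to $\infty$; this single choice works regardless of the position of $\rho_0=\lambda_\mu$ among the $k_j$'s. Writing $z=T^{-1}(w)=\rho_0-1/w$, the rational terms of $(\ref{h(z)4})$ become polynomials in $w$ ($-\dfrac{C}{z-\rho_0}=Cw$ and $-\dfrac{B_3\rho_0^2}{(z-\rho_0)^2}=-B_3\rho_0^2w^2$, up to a constant), while every logarithm becomes the principal logarithm of a Möbius image of $w$ whose values stay in $\mathbb{H}$ for $w\in\mathbb{H}$, so no branch issue arises. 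Differentiating $h\circ T^{-1}$ and simplifying with partial fractions, using $T(\lambda_j)=\dfrac{1}{\rho_0-\lambda_j}$, one obtains
$$(h\circ T^{-1})'(w)=\sum_{j\neq\mu}\dfrac{A_j}{w-T(\lambda_j)}-\dfrac{1}{w}+C-2B_3\rho_0^2\,w.$$
For $w\in\mathbb{H}$ every term on the right has positive imaginary part: the terms $\dfrac{A_j}{w-T(\lambda_j)}$ because $A_j<0$ and $T(\lambda_j)\in\mathbb{R}$ by $(\ref{PFDparameters2})$, the term $-1/w$ because $w\in\mathbb{H}$, $C=(B_2+2B_3)\rho_0$ is real, and $-2B_3\rho_0^2\,w$ because $B_3\rho_0^2<0$, again by $(\ref{PFDparameters2})$. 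Hence $\text{Im}\big((h\circ T^{-1})'(w)\big)>0$ throughout the convex domain $\mathbb{H}$, so $h\circ T^{-1}$, and therefore $h$, is univalent.

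\textbf{The image.} We examine the boundary values of $h$. For real $x\notin\{\lambda_1,\dots,\lambda_{n-1}\}$ (recall $\rho_0=\lambda_\mu$), formula $(\ref{h(z)4})$ shows that $\text{Im}\,h(x)$ is piecewise constant, changing only at the $\lambda_j$'s (where the corresponding argument drops by $\pi$), while $\text{Re}\,h(x)\to+\infty$ as $x\to\lambda_j$ with $j\neq\mu$ (since $A_j<0$), as $x\to\rho_0$ (the term $-B_3\rho_0^2/(x-\rho_0)^2\to+\infty$ dominates, using $B_3\rho_0^2<0$), and as $x\to\pm\infty$ (the total logarithmic coefficient equals $1$). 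On each of the $n$ open intervals into which the $\lambda_j$'s cut $\mathbb{R}$, $\text{Im}\,h$ is constant and $h'(x)$ is real, proportional to $\prod_i(x-k_i)/P(x)$, hence has a single simple zero at the unique $k_i$ lying in that interval; consequently $\text{Re}\,h(x)$ descends from $+\infty$ to a minimum at $x=k_i$ and returns to $+\infty$, so $h$ folds the interval two-to-one onto the horizontal half-line with tip $h(k_i)$ pointing towards $+\infty$. Thus $h(\mathbb{R})$ is a union of $n$ half-lines parallel to $\mathbb{R}$; since $h$ is univalent and extends continuously to $\overline{\mathbb{H}}$ away from the $\lambda_j$'s with $h(\infty)=\infty$, a standard argument-principle (proper mapping) argument then identifies $h(\mathbb{H})$ with the complement of these $n$ half-lines.

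\textbf{Main obstacle.} The univalence computation is routine, essentially a transcription of the preceding proposition with the extra quadratic term controlled by $B_3\rho_0^2<0$. The substantive work is the boundary analysis: confirming that on each boundary interval $\text{Re}\,h$ is unimodal so that the interval's image is precisely a half-line, handling the now order-two singularity at the triple root $\rho_0$ and the behaviour at $\infty$, and converting this boundary picture into the statement that $h(\mathbb{H})$ is exactly $\mathbb{C}$ minus $n$ parallel half-lines.
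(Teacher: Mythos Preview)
Your proof is correct and follows essentially the same route as the paper: the univalence argument via the M\"obius transform $T(z)=\frac{1}{\rho_0-z}$ and positivity of $\mathrm{Im}\big((h\circ T^{-1})'\big)$ on the convex domain $\mathbb{H}$ is exactly the paper's argument (your factor $-2B_3\rho_0^2 w$ is in fact the correct derivative of $-B_3\rho_0^2 w^2$; the paper's coefficient $-\tfrac{1}{2}B_3\rho_0^2$ appears to be a slip), and for the image the paper simply states ``we only have to find $h(\mathbb{R})$'' and refers to a figure, whereas you supply the unimodality/boundary analysis that makes this precise.
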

\begin{proof}
 Applying the Möbius transform $T(z)=\frac{1}{\rho_0-z}:\mathbb{H}\rightarrow\mathbb{H}$ we have that
	          \begin{align*}
	              h\circ T^{-1}(z)&=\sum_{j\neq\mu}A_j\log\left((\rho_0-\lambda_j)\dfrac{z-T(\lambda_j)}{z}\right)+(1-\sum_{j\neq\mu}A_j)\log\left(\dfrac{-1}{z}\right) \\
	              &+Cz-\dfrac{1}{2}B_3\rho_0^2z^2
	          \end{align*}
	          and by differentiating we have that
	          $$(h\circ T^{-1})'(z)=\sum_{j\neq\mu}A_j\dfrac{1}{z-T(\lambda_j)}-\dfrac{1}{z}+C-B_3\rho_0^2z.$$
	          Then, by $(\ref{PFDparameters2})$ we have that $\text{Im}((h\circ T^{-1})'(z))>0$ for all $z\in\mathbb{H}$ and this implies that $h$ is univalent.

	          For the second part, we only have to find $h(\mathbb{R})$, shown in figure $\ref{fig:half lines 4}$, and the result follows.
	          
	             \begin{figure}[ht]
	              \centering
	              \includegraphics[width=1
	              \linewidth]{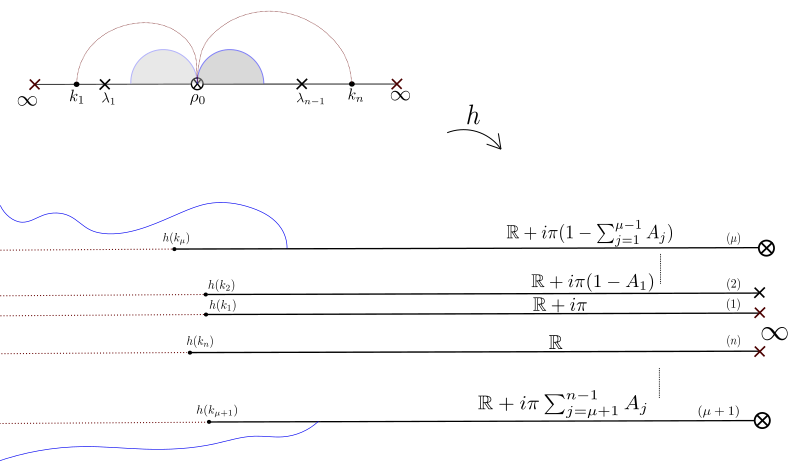}
	              \caption{The image of $h$ for a triple root.}
	              \label{fig:half lines 4}
	          \end{figure}	      
	         
\end{proof}
Arguing as in the previous section, we can see that the traces of the flow in this case accumulate at $\rho_0$, while considering the half-discs $D(\rho_0\pm\epsilon,\epsilon)$ as in figure $\ref{fig:half lines 4}$, we also deduce that they do not intersect these discs. Indeed, consider $\epsilon>0$, so small that $\epsilon C+B_3\rho_0<0<\epsilon C-B_3\rho_0$ and that the preceding discs do not contain any of the $k_j$'s. Then, for $\zeta(\theta)=\rho_0-\epsilon+\epsilon e^{i\theta}\in\partial D(\rho_0-\epsilon,\epsilon)$, we have that
\begin{align*}
\text{Re}(h(\zeta(\theta)))&=\sum_{j\neq\mu}A_j\log|\zeta(\theta)-\lambda_j|+(1-\sum_{j\neq\mu}A_j)\log|\zeta(\theta)-\rho_0|+\dfrac{C}{2\epsilon}\\
&+\dfrac{B_3\rho_0^2}{4\epsilon^2}\dfrac{\cos(\theta)}{\sin^2(\frac{\theta}{2})},
\end{align*}
which tends to $-\infty$ as $\theta\rightarrow0^+$ and 
\begin{align*}
    \text{Im}(h(\zeta(\theta)))&=\sum_{j\neq\mu}A_j\text{Arg}(\zeta(\theta)-\lambda_j)+(1-\sum_{j\neq\mu}A_j)\text{Arg}(\zeta(\theta)-\rho_0)\\
    &+\dfrac{C\epsilon-B_3\rho_0^2}{2\epsilon^2}\cot(\frac{\theta}{2}),
\end{align*}
which tends to $+\infty$ as $\theta\rightarrow0^+$. This implies that for all $\zeta\in\partial D(\rho_0-\epsilon,\epsilon)$, sufficiently close to $\rho_0$, $\text{Im}(h(\zeta))>\pi(1-\sum_{j=1}^{\mu-1}A_j)$. And because the image $h(D(\rho_0-\epsilon,\epsilon))$ does not intersect the half-lines near the point at infinity, we have that the traces $f(k_j\sqrt{1-t},t)$ do not intersect the disc $D(\rho_0-\epsilon,\epsilon)$ for $t$ near $1$. Similarly for the disc $D(\rho_0+\epsilon,\epsilon)$.
Therefore, they approach $\rho_0$ orthogonally.
	     \begin{corollary}\label{chordalsolution4}
The solution to Loewner's ODE $(\ref{chordalODE})$, under the assumptions of this section, is given by the formula
	  \begin{equation}\label{chordalexplicit4}
	      w(z,t)=(1-t)^{1/2}h^{-1}(-\dfrac{1}{2}\log(1-t)+h(z)),
	  \end{equation}
	 for all $z\in\mathbb{H}$ and $t\in[0,1)$, where $h$ is given by $(\ref{h(z)4})$.
\end{corollary}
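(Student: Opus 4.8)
The plan is to read off $(\ref{chordalexplicit4})$ from the implicit relation $(\ref{chordalimplicit4})$, exactly as in the analogous corollaries earlier in the section, the only new ingredient being the univalence of $h$ just established. First I would recall how $(\ref{chordalimplicit4})$ arises: separating variables in $(\ref{chordalODE2})$ and using the partial fraction expansion $(\ref{PFD4})$ with the parameters $(\ref{PFDparameters2})$, the left-hand side equals $h'(v)\,dv$ for $h$ as in $(\ref{h(z)4})$, while the right-hand side integrates to $-\tfrac12\log(1-t)$; integrating from $0$ to $t$ and using $w(z,0)=z$ (hence $v(z,0)=z$) makes the constant of integration vanish, giving
$$h\!\left((1-t)^{-1/2}w(z,t)\right)=-\tfrac12\log(1-t)+h(z)$$
for every $z\in\mathbb{H}$ and every $t\in[0,1)$ at which the solution of $(\ref{chordalODE})$ is defined with $w(z,t)\in\mathbb{H}$.

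Next I would invoke the preceding proposition: $h$ is univalent on $\mathbb{H}$, so it admits a holomorphic inverse $h^{-1}\colon h(\mathbb{H})\to\mathbb{H}$. Since $(1-t)^{-1/2}>0$, the point $v(z,t):=(1-t)^{-1/2}w(z,t)$ lies in $\mathbb{H}$ whenever $w(z,t)$ does, so $h(v(z,t))\in h(\mathbb{H})$; by the displayed identity the point $-\tfrac12\log(1-t)+h(z)$ therefore automatically lies in $h(\mathbb{H})$, and applying $h^{-1}$ yields $v(z,t)=h^{-1}\!\left(-\tfrac12\log(1-t)+h(z)\right)$, i.e. $(\ref{chordalexplicit4})$ after multiplying by $(1-t)^{1/2}$. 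To close the argument cleanly I would then run it backwards: the right-hand side of $(\ref{chordalexplicit4})$ is holomorphic in $z$, equals $z$ at $t=0$, and a one-line differentiation (with $h'$ taken from $(\ref{PFD4})$) shows it solves $(\ref{chordalODE})$; uniqueness for the chordal Loewner ODE then identifies it with $w(z,t)$ and, in passing, shows that it is well defined on all of $[0,1)$ and stays in $\mathbb{H}$.

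The one point that needs care is bookkeeping rather than analysis: one must fix consistent branches of the several logarithms in $(\ref{h(z)4})$ so that $h$ is single-valued and holomorphic on $\mathbb{H}$ --- this is exactly what the M\"obius reduction $T(z)=1/(\rho_0-z)$ in the proof of the preceding proposition arranges --- and one must be sure that $-\tfrac12\log(1-t)+h(z)$ never escapes the image $h(\mathbb{H})$ as $t\uparrow1$. The latter is handed to us for free by the displayed identity once $w(z,t)$ is known to exist and remain in $\mathbb{H}$ throughout $[0,1)$, which is the standard existence statement for the chordal Loewner ODE $(\ref{chordalODE})$. Everything else is the substitution already performed in the body of the subsection, so I expect no real obstacle.
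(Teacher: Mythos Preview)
Your proposal is correct and follows exactly the approach the paper intends: the corollary is stated without proof in the paper, being an immediate consequence of the implicit relation $(\ref{chordalimplicit4})$ together with the univalence of $h$ established in the preceding proposition (just as Corollary~\ref{chordalsolution} is deduced in one line from $(\ref{chordalimplicit})$ and Proposition~\ref{chordalspirallike}). Your additional remarks on branches and on verifying the formula by differentiation are sound but go beyond what the paper supplies.
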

To conclude, we therefore proved the following result.
\begin{theo}
Let $k_1,\dots,k_n$ be real points in increasing order and let $b_1,\dots,b_n$ be positive numbers, such that the polynomial $P$ given by $(\ref{polynomial})$, has a multiple root, either double or triple. Then, the solution to the chordal Loewner PDE in $\mathbb{H}\times[0,1)$,
	      
	      $$\dfrac{\partial f}{\partial t}(z,t)=-f'(z,t)\sum_{j=1}^{n}\dfrac{2b_j}{z-k_j\sqrt{1-t}}$$
	      with initial value $f(z,0)=z$, is the Loewner flow
	       \begin{equation}\label{chordalflow3}
	    f(z,t)=h^{-1}(\dfrac{1}{2}\log(1-t)+h((1-t)^{-1/2}z)),
	     \end{equation}
	      where $h$ is given by $(\ref{h(z)3})$ if the root is of order $2$ or by $(\ref{h(z)4})$ if the root is of order $3$.
	      
	    For each $j$, with $1\le j\le n$, the trace $\hat{\gamma}_j :=\{f(k_j\sqrt{1-t} ,t)/\ t\in[0,1)\}$ is a smooth curve lying in $\mathbb{H}$ that starts perpendicularly from $k_j$, intersecting the real line at the root $\rho_0$ tangentially if it is of order 2 or orthogonally if it is of order 3. 
	      \end{theo}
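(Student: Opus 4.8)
The plan is to read this theorem off from the work of the two preceding subsections (\S4.3 for the double root, \S4.4 for the triple root), the one point still to be supplied being the angle at which each trace leaves its starting point $k_j$.

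First I would record the closed form of the flow. Corollaries \ref{chordalsolution3} and \ref{chordalsolution4} solve the Loewner ODE $(\ref{chordalODE})$ for $w(z,t)$, and the propositions there give that the relevant $h$ is univalent on $\mathbb{H}$. Hence the functional equation $h((1-t)^{-1/2}w(z,t))=-\frac{1}{2}\log(1-t)+h(z)$ inverts, upon replacing $z$ by $f(z,t)$ and using $w(f(z,t),t)=z$, to the stated formula $f(z,t)=h^{-1}(\frac{1}{2}\log(1-t)+h((1-t)^{-1/2}z))$, with $h$ given by $(\ref{h(z)3})$ or $(\ref{h(z)4})$ according to the order of $\rho_0$. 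Differentiating $h$ and using $(\ref{PFD3})$, $(\ref{PFD4})$ shows $h'(z)=\prod_{m=1}^{n}(z-k_m)/P(z)$, and a routine differentiation in $t$ then confirms that this $f$ solves the chordal Loewner PDE with $f(z,0)=z$; uniqueness is standard.

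Next I would collect the geometry that is already available. By \S4.3--\S4.4, $h$ maps $\mathbb{H}$ conformally onto $\mathbb{H}$ minus $n$ half-lines parallel to $\mathbb{R}$, with tip points $h(k_j)$ and extending to $\infty$ away from the tips (the domain being a half-plane bounded by a translate of $\mathbb{R}$ when $\rho_0$ is double, and the full complement of the half-lines when it is triple). Consequently $f(\cdot,t)$ maps $\mathbb{H}$ onto $\mathbb{H}$ minus $n$ slits, and the trace $\hat{\gamma}_j$ is the $h$-preimage of $\{h(k_j)+x:x\le0\}$, hence a smooth curve in $\mathbb{H}$. As $t\to1$, the term $\frac{1}{2}\log(1-t)\to-\infty$ drives this tip towards one of the $n+1$ prime ends of $h(\mathbb{H})$ lying over $\infty$; the half-disc estimates already made there---a disc tangent to $\mathbb{R}$ at $\rho_0$ on which $\mathrm{Im}\,h$ is bounded below in the double case, two half-discs abutting $\rho_0$ with the matching one-sided bounds on $\mathrm{Re}\,h$ and $\mathrm{Im}\,h$ in the triple case---together with the No Koebe Arcs theorem, identify that prime end as $\rho_0$ and show the approach to be tangential when $\rho_0$ is double and orthogonal when it is triple.

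It then remains only to show each $\hat{\gamma}_j$ leaves $k_j$ perpendicularly, and this is the step I would single out. Since $P(k_j)=4b_j\prod_{m\neq j}(k_j-k_m)\neq0$, the point $k_j$ is distinct from the $\lambda_m$ and $\rho_0$, so $h$ is holomorphic near $k_j$, the zero of $h'$ there is simple, and $h''(k_j)=1/(4b_j)>0$; hence near $k_j$ one has $h(z)-h(k_j)=\frac{1}{8b_j}(z-k_j)^2+O((z-k_j)^3)$, and on a half-disc about $k_j$ the map $h$ behaves like $w\mapsto w^{2}$, carrying that half-disc onto a disc about $h(k_j)$ slit along $\{h(k_j)+x:x\ge0\}$. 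Writing $\gamma_j(t)=f(k_j\sqrt{1-t},t)=h^{-1}(h(k_j)+\frac{1}{2}\log(1-t))$, for small $t$ the point $h(k_j)+\frac{1}{2}\log(1-t)$ lies directly opposite that slit, so the branch of $h^{-1}$ staying in $\mathbb{H}$ gives $\gamma_j(t)-k_j\sim 2i\sqrt{b_j\log\frac{1}{1-t}}$, purely imaginary to leading order, whence $\mathrm{Arg}(\gamma_j(t)-k_j)\to\frac{\pi}{2}$. The only delicate point is the boundary behaviour of $h^{-1}$ at the critical value $h(k_j)$---the same subtlety met in Proposition \ref{orthogonality}---so to keep things airtight one may instead run that proposition's $\limsup$ argument on the real and imaginary parts of $(\gamma_j-k_j)\gamma_j'$, using that differentiating the functional equation gives $(\gamma_j(t)-k_j)\gamma_j'(t)\to-2b_j$; or one may simply invoke Theorem 1.2 of \cite{slei1}, the measures $\nu_t$ having compact support and producing $n$ disjoint Jordan arcs. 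Since the substantive analysis was already done in \S4.3--\S4.4, I do not expect a real obstacle here.
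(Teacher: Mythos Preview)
Your proposal is correct and follows the paper's approach: the theorem is stated as a summary of \S4.3--\S4.4, and the paper gives no separate proof beyond ``To conclude, we therefore proved the following result,'' so your reading-off of Corollaries \ref{chordalsolution3}, \ref{chordalsolution4} and the disc/half-disc estimates is exactly what is intended. The one point where you go beyond the paper is the perpendicular departure at $k_j$: the paper never re-argues this in \S4.3--\S4.4 and simply relies on the citation to Schleissinger's Theorem~1.2 (as in the proof in \S4.1), whereas you supply in addition the local expansion via $h''(k_j)=1/(4b_j)$---this is a nice self-contained alternative, and your computation $(\gamma_j-k_j)\gamma_j'\to-2b_j$ is correct, but since you also list the Schleissinger citation as an option, both routes end at the same place.
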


\subsection{A remark on semigroups.} As a conclusion, we will unfold the semigroup nature of the Loewner flows produced by the driving functions of this chapter. Consider the reparameterization $e^{-\tau}=\sqrt{1-t}$, for $\tau\ge0$. Then, the flow $(\ref{chordalflow})$ is written as
\begin{equation}\label{chordalsemigroup0}
    f(z,t)=h^{-1}(e^{-\frac{t}{|B|}e^{-i\psi}}h(e^t z)),
\end{equation}
for all $z\in\mathbb{H}$ and $t\ge0$. It is, therefore, obvious that $\tilde{f}(z,t)=f(e^{-t}z,t)$ is an elliptic semigroup of holomorphic self maps of $\mathbb{H}$, with Denjoy-Wolff point $\beta$. In view of the latter, the chordal equation $(\ref{chordalPDE})$ becomes
\begin{equation}\label{chordalsemigroup}
    \dfrac{\partial\tilde{f}}{\partial t}(z,t)=-\tilde{f}'(z,t)(z+\sum_{j=1}^{n}\dfrac{4b_j}{z-k_j})=:-\tilde{f}'(z,t)P_{\mathbb{H}}(z)
\end{equation}
for all $z\in\mathbb{H}$ and $t\ge0$. We set $P_{\mathbb{H}}$ to play the role of the infinitesimal generator for the semigroup. Notice, also, that by $(\ref{chordalcoefficients2})$ we write $B=1/P_{\mathbb{H}}'(\beta)$.

Assume, now, the Möbius transform $Tz=\frac{z-\beta}{z-\overline{\beta}}$. It is natural to consider the conjugation $g(\cdot,t)=:T\circ\tilde{f}(\cdot,t)\circ T^{-1}$, which forms a semigroup in the unit disc. This way, we are able to connect the radial to the chordal case. Considering further, the reparameterization $t\mapsto\frac{|B|}{\cos(\psi)}t$, and for $a:=\tan(\psi)$, we then write
\begin{equation}
\label{semigroupT}
g(z,t)=(h\circ T^{-1})^{-1}(e^{-(1-ia)t}h\circ T^{-1}(z))
\end{equation}
where $h\circ T^{-1}$, is an $(\text{Arccot}(a)-\frac{\pi}{2})$-spirallike function of $\mathbb{D}$. If $P_{\mathbb{D}}(z)=-\frac{\partial_t g(z,t)}{zg'(z,t)}$ is its infinitesimal generator, we then deduce the relation 
\begin{equation}
    \label{infinitesimalgenerators}
    P_{\mathbb{D}}(z)= (1-ia)B\dfrac{2i\text{Im}\beta P_{\mathbb{H}}(T^{-1}(z))}{(T^{-1}(z)-\beta)(T^{-1}(z)-\bar{\beta)}}=\dfrac{|B|(z-1)^2}{2i\text{Im}\beta\cos(\psi)z}P_{\mathbb{H}}(T^{-1}(z))
\end{equation}
for all $z\in\mathbb{D}$. 

The following proposition allows us to map the chordal case of section 4.1 to the radial case of section 3.3. Recall that we start with the upper half plane setting, thus we have the points $k_j\in\mathbb{R}$, the weights $b_j>0$ and we assume that the polynomial $P$ has a complex root $\beta\in\mathbb{H}$. Hence, in order to consider the radial analogue using the Möbius transform $Tz=\frac{z-\beta}{z-\overline{\beta}}$, we need to make a suitable choice for the weights corresponding to the configuration of the unit disc.
\begin{proposition}
    The function $\hat{g}(z,t):=g(e^{-iat}z,t)$, where $g$ is defined by $(\ref{semigroupT})$, is a radial Loewner flow driven by the function
    $$p(z,t):=\sum_{j=1}^{n}\dfrac{8b_j(\text{Im}\beta)^2|B|}{|\beta-k_j|^4\cos(\psi)}\dfrac{e^{iat}\zeta_j+z}{e^{iat}\zeta_j-z}$$
    where $\zeta_j:=T(k_j)=\frac{\beta-k_j}{\bar{\beta}-k_j}\in\partial\mathbb{D}$, for $1\le j\le n$, and $B$ is given by $(\ref{chordalcoefficients2})$.
\end{proposition}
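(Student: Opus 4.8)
The plan is to recognise $\hat g$ as a \emph{spiroid} flow of the type treated in Section 3.3, whose Koenigs function is the spirallike map $\Phi:=h\circ T^{-1}$, so that its driving function is forced by $(\ref{phiODE})$, and then to compute that driving function explicitly. First I would use $(\ref{semigroupT})$ to write
$$\hat g(z,t)=g(e^{-iat}z,t)=\Phi^{-1}\bigl(e^{-(1-ia)t}\Phi(e^{-iat}z)\bigr),$$
which is precisely the spiroid form $(\ref{spiroid})$ with $b=1$. Since $\Phi$ is the $(\text{Arccot}(a)-\frac{\pi}{2})$-spirallike function of $\mathbb D$ appearing just after $(\ref{semigroupT})$, with $\Phi(0)=0$ and $\Phi'(0)\neq0$, the discussion preceding Proposition \ref{loewner-semigroup} (or a direct differentiation of the functional equation, exactly as done for $f_a$ in Section 3.2) shows that $\hat g$ is a radial Loewner flow with driving function $p(z,t)=p(e^{-iat}z)$, where $p\in H(\mathbb D)$ is determined by $(\ref{phiODE})$ with $b=1$, namely $zp(z)=iaz+(1-ia)\Phi(z)/\Phi'(z)$, and has positive real part because $\Phi$ is spirallike. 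So it suffices to prove $p(z)=\sum_{j=1}^{n}c_j\frac{\zeta_j+z}{\zeta_j-z}$ with the stated $c_j$; then $p(e^{-iat}z)=\sum_j c_j\frac{e^{iat}\zeta_j+z}{e^{iat}\zeta_j-z}$, which is the asserted driving function.

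Next I would put the generator in closed form. From the explicit expressions $(\ref{h(z)})$ and $(\ref{h'(z)})$ all fractional powers cancel and, using $(\ref{polynomial2})$ and $(\ref{polynomial})$,
$$\frac{h(w)}{h'(w)}=B\,\frac{(w-\beta)(w-\bar\beta)\prod_{j=1}^{n-1}(w-\lambda_j)}{\prod_{j=1}^{n}(w-k_j)}=B\Bigl(w+\sum_{j=1}^{n}\frac{4b_j}{w-k_j}\Bigr)=B\,P_{\mathbb H}(w).$$
Since $T^{-1}(w)=\frac{\bar\beta w-\beta}{w-1}$ has $(T^{-1})'(w)=\frac{2i\,\text{Im}\beta}{(w-1)^2}$, this gives $\frac{\Phi(z)}{\Phi'(z)}=\frac{B(z-1)^2}{2i\,\text{Im}\beta}\,P_{\mathbb H}(T^{-1}(z))$ --- which is $(\ref{infinitesimalgenerators})$ read backwards --- and hence
$$p(z)=ia+\frac{(1-ia)B}{2i\,\text{Im}\beta}\cdot\frac{(z-1)^2}{z}\,P_{\mathbb H}\bigl(T^{-1}(z)\bigr).$$

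Finally I would analyse this rational function. From $T^{-1}(z)-k_j=\frac{(\bar\beta-k_j)(z-\zeta_j)}{z-1}$, where $\zeta_j=\frac{\beta-k_j}{\bar\beta-k_j}=T(k_j)$ are distinct points of $\partial\mathbb D$, the only candidate poles of $p$ are at $z=0,\,1,\,\zeta_1,\dots,\zeta_n$; those at $z=1$ and $z=0$ are removable because $T^{-1}(1)=\infty$ and $P_{\mathbb H}(T^{-1}(0))=P_{\mathbb H}(\beta)=0$. Using also $T^{-1}(\infty)=\bar\beta$, $P_{\mathbb H}(\bar\beta)=0$ and $P_{\mathbb H}'(\beta)=1/B$, one-term Taylor expansions of $P_{\mathbb H}$ give $p(\infty)=-1$ and $p(0)=1$. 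Thus $p$ is rational, finite at $\infty$, with simple poles exactly at the $\zeta_j$, so $p(z)=p(\infty)+\sum_j\frac{\operatorname{Res}_{z=\zeta_j}p}{z-\zeta_j}$; computing the residue from the simple pole of $\frac{(z-1)^2}{z}P_{\mathbb H}(T^{-1}(z))$ at $\zeta_j$ --- here one uses $\zeta_j-1=\frac{2i\,\text{Im}\beta}{\bar\beta-k_j}$ and $(1-ia)B=|B|/\cos\psi$ --- gives $\operatorname{Res}_{z=\zeta_j}p=-2c_j\zeta_j$ with $c_j=\frac{8b_j(\text{Im}\beta)^2|B|}{|\beta-k_j|^4\cos\psi}$, which is positive since $\cos\psi>0$ by Proposition \ref{chordalspirallike}. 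Since $\sum_j c_j\frac{\zeta_j+z}{\zeta_j-z}=-\sum_j c_j+\sum_j\frac{-2c_j\zeta_j}{z-\zeta_j}$, the values $p(\infty)=-1$ and $p(0)=1$ force $\sum_j c_j=1$, and therefore $p(z)=\sum_j c_j\frac{\zeta_j+z}{\zeta_j-z}$, completing the proof. The only genuinely delicate step is this last residue bookkeeping, together with the verification that the spurious poles at $0$ and $1$ really cancel; both hinge on the identities $P_{\mathbb H}(\beta)=0$ and $P_{\mathbb H}'(\beta)=1/B$, and once $\hat g$ is recognised as a spiroid flow the rest is formal.
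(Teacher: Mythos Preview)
Your proof is correct and follows the same opening strategy as the paper: both recognise $\hat g$ as a spiroid flow via $(\ref{semigroupT})$ and deduce that the driving function is $p(z,t)=ia+P_{\mathbb D}(e^{-iat}z)$, with $P_{\mathbb D}$ given by $(\ref{infinitesimalgenerators})$. Where the two arguments diverge is in the final identification of $P_{\mathbb D}$ with the stated weighted sum of M\"obius kernels. The paper proceeds by first invoking the partial-fraction computation from the proof of Proposition~\ref{chordalspirallike} to write $P_{\mathbb D}(z)=\sum_j\frac{4b_j}{|\beta-k_j|^2}\frac{1}{\bar\beta-k_j}\frac{z-1}{z-\zeta_j}$, and then carries out a chain of direct algebraic manipulations, using the auxiliary identities $(\ref{B})$ and $(\ref{B'})$ (the latter being exactly $\sum_j c_j=1$) to massage the expression into the desired form. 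You instead treat $p$ as a rational function: after checking that the apparent poles at $0$ and $1$ are removable (via $P_{\mathbb H}(\beta)=0$ and the behaviour at infinity), you determine $p$ from its value at infinity and its residues at the $\zeta_j$, and then recover $\sum_j c_j=1$ a posteriori from $p(0)=1$. Your route is somewhat more conceptual and avoids the term-by-term bookkeeping; the paper's route has the minor advantage of deriving $(\ref{B'})$ directly rather than as a consequence. Both rely on the same key inputs $P_{\mathbb H}(\beta)=0$, $P_{\mathbb H}'(\beta)=1/B$, and $(1-ia)B=|B|/\cos\psi$.
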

\begin{proof}
    By $(\ref{semigroupT})$, the driving function is $p(z,t):=-\frac{\partial_t\hat{g}(z,t)}{z\hat{g}'(z,t)}=ia+P_{\mathbb{D}}(e^{-iat}z).$
    By the first equality of $(\ref{infinitesimalgenerators})$, following the proof of proposition $\ref{chordalspirallike}$, we rewrite $P_{\mathbb{D}}$ as the sum 
    $$P_{\mathbb{D}}(z)=\sum_{j=1}^{n}\frac{4b_j}{|\beta-k_j|^2}\frac{1}{T^{-1}(z)-k_j}=\sum_{j=1}^{n}\frac{4b_j}{|\beta-k_j|^2}\frac{1}{\bar{\beta}-k_j}\frac{z-1}{z-\zeta_j}$$
    and therefore, again by $(\ref{infinitesimalgenerators})$,
  \begin{align*}
      p(z,t)&=ia+(1-ia)\sum_{j=1}^{n}\frac{4b_j}{|\beta-k_j|^2}\frac{2i\text{Im}\beta B}{\bar{\beta}-k_j}\frac{z-e^{iat}}{z-\zeta_je^{iat}}\\
      &=1+ia-1+(1-ia)\sum_{j=1}^{n}\frac{4b_j}{|\beta-k_j|^2}\frac{2i\text{Im}\beta B}{\beta-k_j}\zeta_j\frac{z-e^{iat}}{z-\zeta_je^{iat}}.
  \end{align*}

   We, now, use relation $(\ref{chordalcoefficients2})$. By the same argument as before, we have that
   \begin{equation}\label{B}
       \frac{1}{2i\text{Im}\beta B}=\sum_{j=1}^{n}\frac{4b_j}{|\beta-k_j|^2(\beta-k_j)} \quad\text{or} \quad 1=\sum_{j=1}^{n}\frac{4b_j 2i\text{Im}\beta B}{|\beta-k_j|^2(\beta-k_j)}.
   \end{equation}
   By considering the real part in the right hand part of the preceding equation, we also deduce that 
     \begin{equation}\label{B'}
      1=\sum_{j=1}^{n}\frac{8b_j (\text{Im}\beta)^2 |B|}{|\beta-k_j|^4\cos(\psi)}.
   \end{equation}
   As a result, by $(\ref{B})$
   \begin{align*}
       p(z,t)&=1-(1-ia)\sum_{j=1}^{n}\frac{4b_j}{|\beta-k_j|^2}\frac{2i\text{Im}\beta B}{\beta-k_j}(1-\zeta_j\frac{z-e^{iat}}{z-\zeta_je^{iat}})\\
       &=1-(1-ia)\sum_{j=1}^{n}\frac{4b_j}{|\beta-k_j|^4}\frac{(2i\text{Im}\beta)^2 B(\bar{\beta}-k_j)}{2i\text{Im}\beta}\frac{z(1-\zeta_j)}{z-\zeta_je^{iat}}
   \end{align*}
 
   We finally observe that $\frac{\bar{\beta}-k_j}{2i\text{Im}\beta}(\zeta_j-1)=1$ and hence by $(\ref{B'})$ 
   \begin{align*}
       p(z,t)&=1+\sum_{j=1}^{n}\frac{8b_j}{|\beta-k_j|^4}\frac{(\text{Im}\beta)^2 |B|}{\cos(\psi)}\frac{2z}{\zeta_je^{iat}-z}\\
       &=\sum_{j=1}^{n}\dfrac{8b_j(\text{Im}\beta)^2|B|}{|\beta-k_j|^4\cos(\psi)}(1+\frac{2z}{\zeta_je^{iat}-z})
   \end{align*}
and the result follows.
\begin{figure}[ht]
	              \centering
	              \includegraphics[width=1
	              \linewidth]{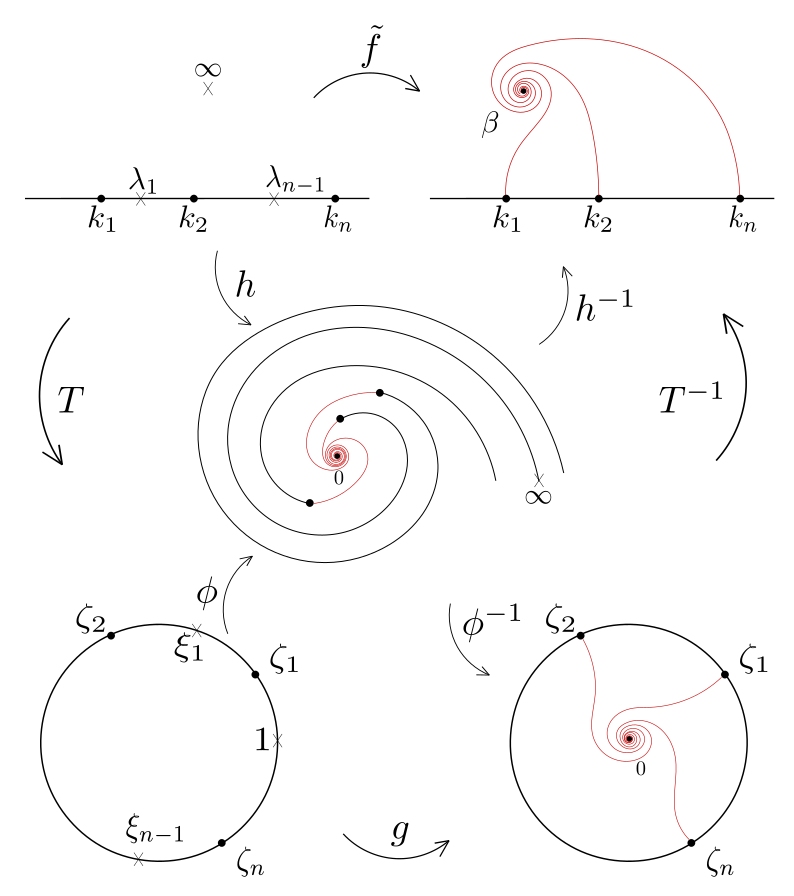}
	              \caption{Mapping chordal to radial.}
	              \label{fig:T}
	          \end{figure}	    

\end{proof}

\begin{rmk}
We can, therefore, connect the chordal flow to the radial flow by applying a Möbius transform that maps the attraction point $\beta\in\mathbb{H}$ to the origin and a suitable time reparameterization. This means that the points $k_j$ are mapped to some points $\zeta_j\in\partial\mathbb{D}$, the initial point masses. Following section 3.3, for these points and the parameters of the preceding proposition, there will exist points $\xi_j\in\partial\mathbb{D}$ (see lemma $\ref{parameters}$), playing the role of the preimages of infinity under $\phi$, where $\phi$ is given by theorem $\ref{Radialflow}$.

On the other hand, the preimages of infinity under $h$, given by $(\ref{h(z)})$, are the points $\lambda_1,\dots,\lambda_{n-1}$ (the real roots of P) and $\infty$. It is, thus, necessary that the points $\xi_j$ coincide with the images of these points under $T$, as we see in the figure above. To verify this, a short computation shows that

$$\sum_{j=1}^{n}\dfrac{8b_j(\text{Im}\beta)^2|B|}{|\beta-k_j|^4\cos(\psi)}\frac{z+\zeta_j\frac{1-ia}{1+ia}}{\zeta_j-z}=2i\text{Im}\beta\bar{B}\frac{P(T^{-1}z)}{(T^{-1}z-\beta)(T^{-1}z-\bar{\beta})}$$
where the first part of the equation determines the points $\xi_j$ as seen by lemma $\ref{parameters}$.
\end{rmk}
    
In view of the reparameterization $(\ref{chordalsemigroup0})$ and the time-change in PDE $(\ref{chordalsemigroup})$, we introduce the PDE in $\mathbb{H}\times[0,+\infty)$
\begin{equation}\label{generalchordalPDE}
\dfrac{\partial f}{\partial t}(z,t)=-f'(z,t)P_{\mathbb{H}}(z,t)
\end{equation}
with initial value $f(z,0)=z$, where $P_{\mathbb{H}}(\cdot,t)$ is an analytic function of the upper half plane given in the form
\begin{equation}
    \label{pH}
    P_{\mathbb{H}}(z,t)=e^tp(ze^{-t})-z
\end{equation}
for some analytic function $p\in H(\mathbb{H})$, such that the solution to (\ref{generalchordalPDE}) is a chordal Loewner flow. As in section 3.4, we assume a time-dependent chordal equation, however the dependence is "weak". The preceding assumption and the assumptions of the following proposition will give us a class of driving functions, for which the Loewner flows are given in terms of spirallike functions.
\begin{proposition}
        Consider the chordal PDE $(\ref{generalchordalPDE})$, with the assumptions of $(\ref{pH})$. Assume that there exists some $\beta\in\mathbb{H}$, such that $p(\beta)=0$, $p'(\beta)\neq0$ and that for all $z\in\mathbb{H}$
        $$\text{Im}\left(\dfrac{p(z)}{(z-\beta)(z-\bar{\beta)}}\right)<0.$$
        Then, the Loewner flow is given by the formula $(\ref{chordalsemigroup0})$, where $p'(\beta):=|B|e^{-i\psi}$ and $h\in H(\mathbb{H})$ maps the upper half plane onto some $(-\psi)$-spirallike domain with respect to the origin.
\end{proposition}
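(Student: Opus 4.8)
The plan is to recognise the chordal analogue of the reduction behind Proposition~\ref{loewner-semigroup} and of Corollary~\ref{chordalsolution}: after the built-in rescaling the flow becomes a Loewner flow with \emph{time-independent} driving, hence an elliptic semigroup of $\mathbb{H}$ with Denjoy--Wolff point $\beta$, which is linearised by its Koenigs function $h$; the formula then just reads off.

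First I would put $\tilde f(z,t):=f(e^{-t}z,t)$. The shape~(\ref{pH}) of $P_{\mathbb{H}}$ is arranged exactly so that, by the chain rule together with the cancellation $z+e^{t}P_{\mathbb{H}}(e^{-t}z,t)=p(z)$, the function $\tilde f$ satisfies the time-independent chordal Loewner PDE
\[
\frac{\partial\tilde f}{\partial t}(z,t)=-\tilde f'(z,t)\,p(z),\qquad \tilde f(z,0)=z .
\]
By the Berkson--Porta dictionary recalled in the introduction (and already exploited in Proposition~\ref{loewner-semigroup}), a chordal Loewner flow with an admissible time-independent generator is an elliptic semigroup of holomorphic self-maps of $\mathbb{H}$, and admissibility together with the position of the Denjoy--Wolff point is precisely the hypothesis: $p(\beta)=0$, $p'(\beta)\neq0$, and $\text{Im}\!\big(p(z)/((z-\beta)(z-\bar\beta))\big)<0$ on $\mathbb{H}$ — the half-plane Berkson--Porta inequality, i.e. the condition underlying Proposition~\ref{spirallikeinH}. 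This inequality also shows $p$ is zero-free on $\mathbb{H}\setminus\{\beta\}$, and letting $z\to\beta$ (and using the open mapping theorem) it forces $\text{Re}\,p'(\beta)>0$; write $p'(\beta)=:|B|e^{-i\psi}$, $\psi\in(-\tfrac{\pi}{2},\tfrac{\pi}{2})$.

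Thus $(\tilde f_t)$ is an elliptic semigroup with $\tilde f_t(z)\to\beta$, linearised by its Koenigs function $h\in H(\mathbb{H})$, determined by $p(z)\,h'(z)=|B|e^{-i\psi}h(z)$ with $h(\beta)=0$ (integrate $h'/h=|B|e^{-i\psi}/p$; the simple zero of $p$ at $\beta$ makes $h$ well defined up to a multiplicative constant, with a simple zero at $\beta$), so that $\tilde f(z,t)=h^{-1}\!\big(e^{-|B|e^{-i\psi}t}h(z)\big)$. To see $h$ is $(-\psi)$-spirallike with respect to the origin — in particular univalent, which is what makes the previous display meaningful — I would substitute its defining equation into the criterion of Proposition~\ref{spirallikeinH}:
\[
\text{Im}\!\left(e^{i\psi}\frac{(z-\beta)(z-\bar\beta)h'(z)}{h(z)}\right)=|B|\,\text{Im}\!\left(\frac{(z-\beta)(z-\bar\beta)}{p(z)}\right)>0\qquad(z\in\mathbb{H}),
\]
the inequality being the hypothesis read through $\text{Im}(1/\zeta)=-\text{Im}(\zeta)/|\zeta|^{2}$. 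Finally $f(z,t)=\tilde f(e^{t}z,t)=h^{-1}\!\big(e^{-|B|e^{-i\psi}t}\,h(e^{t}z)\big)$, which is formula~(\ref{chordalsemigroup0}); since $h(\mathbb{H})$ is $(-\psi)$-spirallike, $e^{-|B|e^{-i\psi}t}h(e^{t}z)$ stays in $h(\mathbb{H})$, so the flow is defined on all of $\mathbb{H}\times[0,\infty)$.

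\textbf{The step I expect to be the main obstacle} is the global univalence of $h$: a priori $h$ is only a locally defined primitive, so one must know it extends to a single-valued conformal map of $\mathbb{H}$. Proposition~\ref{spirallikeinH} supplies exactly this from the hypothesis on $p$, so the real content of the present proposition is the remark that this hypothesis \emph{is} the spirallikeness/Berkson--Porta condition. Everything else — the one-line verification that $\tilde f=f(e^{-t}\cdot,t)$ removes the time dependence via $z+e^{t}P_{\mathbb{H}}(e^{-t}z,t)=p(z)$, and invoking the semigroup/Koenigs dictionary — merely transcribes Proposition~\ref{loewner-semigroup} and Corollary~\ref{chordalsolution} to this generality; one further sentence should address the domains of $\tilde f_t$, $f_t$ and consistency with the hydrodynamic normalisation.
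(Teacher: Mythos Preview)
Your proposal is correct and follows essentially the same line as the paper: define $h$ via the ODE $h'/h=p'(\beta)/p$, invoke Proposition~\ref{spirallikeinH} under the hypothesis on $p$ to get $(-\psi)$-spirallikeness (hence univalence), and read off the flow formula~(\ref{chordalsemigroup0}). The paper's proof is terser --- it simply writes down $h$, cites Proposition~\ref{spirallikeinH}, defines $f$ by the formula and says the verification is straightforward --- whereas you organise the argument in the forward direction (reduce $f$ to the time-independent flow $\tilde f$, recognise the semigroup, extract the Koenigs function), but the mathematical content and the key lemma are identical.
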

\begin{proof}
 Given some $p$, with the assumptions above, consider $h$ to be the solution to the ODE in $\mathbb{H}$,
 $$\dfrac{h'(z)}{h(z)}=\dfrac{p'(\beta)}{p(z)}.$$
so that $h(\beta)=0$. Then, by proposition \ref{spirallikeinH}, $h$ is $(-\psi)$-spirallike and define $f(z,t)$ by $(\ref{chordalsemigroup0})$. It is then straightforward to see that $f$ is a chordal Loewner flow satisfying $(\ref{generalchordalPDE})$.
\end{proof}

Similarly for the other cases, we see that by the same time reparameterization, we can write the flow $(\ref{chordalflow2})$ as $f(z,t)=h^{-1}(e^{\frac{t}{B_1}}h(e^t z)))$ and the flow $(\ref{chordalflow3})$ as $f(z,t)=h^{-1}(-t+h(e^tz))$. In both cases, we deduce that $\tilde{f}(z,t)=f(e^{-t}z,t)$ is a non-elliptic semigroup, with Denjoy-Wolff point $\rho_0\in\mathbb{R}$.


\begin{thebibliography}{}

\bibitem{slei2}
C. Böhm, S. Schleissinger, \textit{The Loewner equation for multiple slits,
multiply connected domains and branch points}, Ark. Mat., 54 (2016), 339–370


\bibitem{bra}
F. Bracci, M.D. Contreras, S. Diaz-Madrigal, \textit{Continuous Semigroups of Holomorphic Self-maps of the Unit Disc}, Springel Monographs in Mathematics, 2020.



\bibitem{con}
	J. B. Conway, \textit{Functions of One Complex Variable}, Vol. II, Springer, 1996

	\bibitem{dur}
 	P. L. Duren, \textit{Univalent Functios,} Springer-Verlag, 1983.
  
 	
 		\bibitem{gam}	T.W. Gamelin, \textit{Complex Analysis,} Springer 2001.
   
   

   
   \bibitem{gust}
B. Gustafsson, R. Teodorescu, A. Vasil'ev, \textit{Classical and Stochastic Laplacian Growth}, Advances in Mathematical Fluid Mechanics, Birkhäuser Verlag, 2014
   
  

\bibitem{kad}
W. Kager, Nienhuis, B. Kadanoff, L. P., \textit{Exact solutions for Loewner evolutions}, J.
Statist. Phys. 115 (2004), 805–822.


\bibitem{wu}
K.-S. Lau and H.-H. Wu, “On tangential slit solution of the Loewner equation,” Ann. Acad. Sci. Fenn. Math.
41, 681–691 (2016).

\bibitem{lawl}
 G. Lawler, \textit{Conformally Invariant Processes in the Plane}, American Mathematical
Society, Providence, 2005.

\bibitem{Lind2}
J. Lind and H. Tran, “Regularity of Loewner curves,” Indiana Univ. Math. J. 65, 1675–1712 2016.

\bibitem{lind}
J. Lind, \textit{A sharp condition for the Loewner equation to generate slits}, Ann. Acad.
Sci. Fenn. Math. 30 (2005), 143–158.

\bibitem{lind2}
J. Lind, \textit{Tangential Loewner hulls}, Annales Fennici Mathematici
Vol. 46, 2021, 619–631.

 \bibitem{mar}
J. Lind, D. E. Marshall, S. Rohde, \textit{Collisions and Spirals of Loewner Traces}, Duke Math. J. 154(3): 527-573, 2013.

\bibitem{MR}
D. E. Marshall, S. Rohde, \textit{The Loewner differential equation and slit mappings}, J.
Amer. Math. Soc. 18 (2005), 763–778.


\bibitem{mon}
A. Monaco, P. Gumenyuk, \textit{Chordal Loewner Equation}, Complex analysis and dynamical systems VI. Part 2, 63–77,
Contemp. Math., 667, Israel Math. Conf. Proc., Amer. Math. Soc., Providence, RI, 2016.

\bibitem{nat}
 	I.P. Natanson, \textit{Theory of Functions of a Real Variable Vol. 2,} Frederick Ungar Publishing Co., 1960.
 	
 	\bibitem{pom}
 	Ch. Pommerenke, \textit{Univalent Function, with a chapter on Quadratic Differentials,} Vandenhoeck and Ruprecht in G\"{o}ttingen, 1973.

  

\bibitem{pro}
D. V. Prokhorov, A. M. Zakharov, A. V. Zherdev, \textit{Solutions of the Loewner Equation with Combined Driving Functions},
 Izvestiya of Saratov University. Mathematics. Mechanics. Informatics, 2021, vol. 21, iss. 3,
pp. 317–325.

\bibitem{pro2}
D. Prokhorov, A. Vasil'ev, \textit{Singular and Tangent Slit Solutions to the Loewner Equation}, Analysis and Mathematical Physics
Trends in Mathematics, 455–463

\bibitem{pro3}
D. Prokhorov, \textit{Exact Solutions of the Multiple Loewner Equation}, Lobachevskii Journal of Mathematics, 2020, Vol. 41, No. 11, pp 2248-2256



 \bibitem{sar}
 	D. Sarason, \textit{Notes on Complex Function Theory}, second edition, AMS, 2000.

  \bibitem{slei1}
S. Schleissinger, \textit{The Multiple-Slit version of Loewner's differential equation and pointwise Hölder continuity of driving functions}, Annales Academiæ Scientiarum Fennicæ
Mathematica
Volumen 37, 2012, 191–201

 	\bibitem{sola1}
 	A. Sola. \textit{Elementary examples of loewner chains generated by densities,} Annales
Universitatis Mariae Curie-Sklodowska, sectio A Mathematica, 67(1), 2013.


\bibitem{star}
A. Starnes, \textit{The Loewner Equation for Multiple Hulls}, Annales Academiæ Scientiarum Fennicæ
Mathematica
Volumen 44, 2019, 581–599.


\bibitem{tec}
M. Technau, N. Technau, \textit{A Loewner Equation for Infinitely Many Slits.}, Comput. Methods Funct. Theory (2017) 17:255–272.




\bibitem{won}
C. Wong, \textit{Smoothness of Loewner Slits,} Transactions of AMS, Volume 366, Number 3, March 2014, pages 1475-1496.

\end{thebibliography}
	\end{document}